\newcommand{\Aut}[0]{\operatorname{Aut}}
\newcommand{\PP}{{\mathbb{P} }}
\newcommand{\Z}{{\mathbb{Z} }}
\newcommand{\cR}{{\mathcal{R}}}
\newcommand{\cB}{{\mathcal{B}}}
\newcommand{\cX}{{\mathcal{X}}}
\newcommand{\cZ}{{\mathcal{Z}}}
\newcommand{\mult}{{\rm mult}}
\newcommand{\Supp}{{\rm Supp}}
\newcommand{\ord}{{\rm ord}}
\newcommand{\gr}{{\rm gr}}
\newtheorem{thm}{Theorem}[section]
\newtheorem{lem}[thm]{Lemma}
\newtheorem{cor}[thm]{Corollary}
\newtheorem{prop}[thm]{Proposition}
\newtheorem{conj}[thm]{Conjecture}
\theoremstyle{definition}
\newtheorem{defn}[thm]{Definition}
\newtheorem{rem}[thm]{Remark}
\newtheorem{defn-thm}[thm]{Definition--Theorem}  
\newtheorem{defn-prop}[thm]{Definition--Proposition}  
\newtheorem{defn-lem}[thm]{Definition--Lemma}  
\theoremstyle{remark}
\newcommand{\Fut}{{\rm Fut}}
\newcommand{\Proj}{{\rm Proj}}
\newcommand{\Val}{{\rm Val}}
\newcommand{\vol}{{\rm vol}}
\newcommand{\hvol}{\widehat{\rm vol}}
\newcommand{\cD}{{\mathcal{D}}}
\newcommand{\cO}{{\mathcal{O}}}
\newcommand{\fa}{{\mathfrak{a}}}
\newcommand{\lct}{{\rm lct}}
\newcommand{\im}{{\rm im}}
\newcommand{\bQ}{{\mathbb{Q}}}
\newcommand{\Q}{{\mathbb{Q}}}
\newcommand{\N}{{\mathbb{N}}}
\newcommand{\fb}{{\mathfrak{b}}}
\newcommand{\Spec}{{\rm Spec}}
\renewcommand{\d}{\delta}
\newcommand{\Diff}{{ \rm Diff}}
\newcommand{\e}{\varepsilon}
\newcommand{\Exc}{{ \rm Exc}}
\newcommand{\la}{\lambda}
\newcommand{\cI}{\mathcal{I}}
\newcommand{\coeff}{{ \rm coeff}}
\newcommand{\fc}{{\mathfrak{c}}}
\newcommand{\cF}{\mathcal{F}}
\newcommand{\init}{{\rm in}}
\newcommand{\R}{{\mathbb{R}}}
\begin{document}

\title{Uniqueness of K-polystable degenerations of
Fano varieties}
\date{\today}

\author{Harold Blum}
\address{University of Utah, Salt Lake City, UT 48112, USA}
\email{blum@math.utah.edu}

\author {Chenyang Xu}

\address   {Beijing International Center for Mathematical Research,
       Beijing 100871, China}
\email     {cyxu@math.pku.edu.cn}
\address   {Current Address:
 MIT, Cambridge, MA 02139, USA}
\email     {cyxu@math.mit.edu}

\begin{abstract}{
We prove that K-polystable  degenerations of $\mathbb{Q}$-Fano varieties are unique. 
Furthermore, we show that the moduli stack of K-stable $\mathbb{Q}$-Fano varieties is separated. 
Together with recently proven boundedness and openness statements, the latter result yields a separated Deligne-Mumford stack parametrizing all uniformly K-stable  $\Q$-Fano varieties of fixed dimension and volume.  The result also implies that the automorphism group of a K-stable $\Q$-Fano variety is finite.}
\end{abstract}

\maketitle{}
\setcounter{tocdepth}{1}
\tableofcontents

\section{Introduction}\label{s-intro}
\subsection{Moduli spaces of Fano varieties}\label{ss-moduli}
To give a general framework for intrinsically constructing  moduli spaces of Fano varieties is a challenging question in algebraic geometry, especially if one wants to find a compactification.  Unlike the KSBA construction in the canonically polarized case,  the Minimal Model Program often provides more than one limit for a family of Fano varieties over a punctured curve.  Thus, it is unclear how to find a  theory that picks the right limit. In examples, people have obtained a lot of working experience on how to identify the simplest limit. On the negative side, examples such as \cite[Section 2.2]{PP10}, which gives a family that  isotrivially degenerates a homogeneous space to a different quasi-homogeneous space (with non-reductive automorphism group), suggest that we should not consider all smooth Fano varieties.

So when the definition of K-stability from complex geometry \cite{Tia97} and its algebraic formulation  \cite{Don02}, which were introduced to characterize when a Fano variety admits a K\"ahler-Einstein metric, first appeared in front of algebraic geometers, it seemed bold to expect such a notion would be a key ingredient in constructing moduli spaces of Fano varieties.
However, as the theory has developed,  more and more evidence makes such an expectation believable.

We now expect that the moduli functor $\mathcal{M}^{\rm Kss}_{n,V}$ of $n$-dimensional K-semistable $\mathbb{Q}$-Fano varieties of volume $V$, which sends $S\in {\sf Sch}_k$ to
 \[
\mathcal{M}^{\rm Kss}_{n,V}(S) = 
 \left\{
  \begin{tabular}{c}
\mbox{Flat proper morphisms $X\to S$, whose geometric fibers are }\\
\mbox{$n$-dimensional K-semistable $\Q$-Fano varieties with  }\\
\mbox{volume $V$, satisfying Koll\'ar's condition}
  \end{tabular}
\right\}
\]
is represented by an Artin stack $\mathcal{M}^{\rm Kss}_{n,V}$ of finite type and admits a projective good moduli space $\mathcal{M}^{\rm Kss}_{n,V}\to M^{\rm Kps}_{n,V}$ (in the sense of \cite{Alp13}), whose closed points are in bijection with $n$-dimensional K-polystable $\Q$-Fano varieties of volume $V$. Here,  Koll\'ar's condition means that the reflexive power $\omega^{[m]}_{X/S}$ is flat over $S$ and commutes with arbitrary base change for each $m \in \Z$ (see \cite[24]{Kol08}).

While smooth K\"ahler-Einstein  Fano manifolds with finite automorphism group are asymptotically Chow stable  \cite{Don01}, examples in \cite{OSY12, LLSW17} show that the GIT approach likely fails to treat those with infinite automorphism groups or singularities. (See \cite{WX14} for examples where asymptotic Chow stability fails to construct compact moduli spaces in the KSBA setting.) Therefore, we need to take a more abstract approach to constructing $M^{\rm Kps}_{n,V}$.

The construction of $M^{\rm Kps}_{n,V}$ reduces to proving a number of concrete statements about families of $\Q$-Fano varieties. We list the main ones: 
\begin{enumerate}[(I)]
\item \textsc{Boundedness}: There is a positive integer $N=N (n,V)$ such that if $X\in \mathcal{M}^{\rm Kss}_{n,V} (k)$, then $-NK_{X}$  is a very ample Cartier divisor. This is settled in \cite{Jia17} using results in \cite{Bir16}.
\item \textsc{Zariski openness}: If $X \to S$ is a family of $\mathbb{Q}$-Fano varieties, then the locus where the fiber is K-semistable is a Zariski open set. 
\end{enumerate}

\medskip

Together, (I) and (II) show that $\mathcal{M}^{\rm Kss}_{n,V}$ is an Artin stack of finite type and is a global quotient. 
The following  statements  are needed to show $\mathcal{M}^{\rm Kss}_{n,V}$  admits a projective good moduli space.

\medskip

\begin{enumerate}[(I)]
\setcounter{enumi}{2}
\item \textsc{Good quotient}: The stack $\mathcal{M}^{\rm Kss}_{n,V}$ admits a good  moduli space. To prove this, it suffices to show:
\begin{enumerate}
\item[(III.a)] \textsc{Reductive automorphism group}: If $X$ is a K-polystable $\mathbb{Q}$-Fano variety $X$, then ${\rm Aut}(X)$ is reductive. 
\item[(III.b)] \textsc{Gluing of local quotients}: 
Near each K-polystable $\mathbb{Q}$-Fano variety $X \in \mathcal{M}^{\rm Kss}_{n,V}(k)$, 
there exists a local atlas around $[X]$ given by an $\Aut(X)$ slice. Furthermore, a point in the slice is GIT (poly/semi)stable with respect to $\Aut(X)$ if and only if the corresponding $\Q$-Fano variety is K-(poly/semi)stable.  To complete this step, it remains to verify  that the local GIT quotient spaces glue together to give the good quotient $M^{\rm Kps}_{n,V}$ (e.g. the hypotheses of  \cite[Theorem 1.2]{AFS17} are satisfied).
\end{enumerate}

\item \textsc{Separatedness}: 
Any two K-semistable degenerations of a family of K-semistable $\Q$-Fano varieties over a punctured curve $C^{\circ}=C\setminus 0$ lie in the same $S$-equivalence class, i.e. they degenerate to a common K-semistable $\Q$-Fano variety via special test configurations. 

\item \textsc{Properness}:  
Roughly speaking, any family of K-semistable Fano varieties over a punctured curve $C^{\circ}=C\setminus 0$ can be filled in over $0$ to a family of K-semistable $\Q$-Fano varieties over $C$. 

\item \textsc{Projecitivty}: A sufficiently divisible multiple of the CM-line bundle yields an ample line bundle on $M^{\rm Kps}_{n,V}$.
\end{enumerate}

We note that there are subtleties related to the requirement that objects in $\mathcal{M}^{\rm Kss}_{n,V}(S)$ satisfy Koll\'ar's condition. Luckily, such issues are of a local nature and have all been addressed in the construction of  the moduli space of KSBA stable varieties (see \cite{Kol08,Kol19}). 

Strong evidence for the above picture is that, aside from (VI) (the projectivity of $M^{\rm Kps}_{n,V}$), the problem is completely solved in \cite{LWX14} (see also \cite{SSY16, Oda15}) for $\mathbb{Q}$-Fano varieties with a $\Q$-Gorenstein smoothing and some progress on the projectivity was made in \cite{LWX15}. However, these results rely heavily on the deep analytic tools established in \cite{CDS,Tia15}. Therefore, a completely algebraic proof is highly desirable. Such a proof would likely allow us to drop the  \emph{smoothable} assumption. 

The main result in this paper  gives a complete solution to (IV).  In the smoothable case, this step is solved in \cite{LWX14, SSY16} using analytic tools. The argument in this document is purely algebraic. 

\subsection{Separatedness result}

The following statement is our main result.
\begin{thm}\label{t-main}
Let $\pi: (X,\Delta) \to C$ and $\pi': (X',\Delta')\to C$ be $\Q$-Gorenstein families of log Fano pairs over a smooth pointed curve $0 \in C$. Assume there exists an  isomorphism
\[
\phi: (X,\Delta) \times_C C^\circ  \to (X',\Delta') \times_C C^\circ
\] 
over $C^\circ\colon = C\setminus 0 $.
\begin{enumerate}[(1)]
\item K-semistable case:
If $(X_0,\Delta_0)$ and $(X_0',\Delta_0')$ are K-semistable, then they are S-equivalent.
 \item K-polystable case:
 If $(X_0,\Delta_0)$ and $(X_0',\Delta_0')$ are K-polystable, then they are isomorphic.
\item K-stable case:
 If $(X_0,\Delta_0)$ is K-stable and $(X_0',\Delta_0')$ is K-semistable, then $\phi$ extends to an isomorphism $(X,\Delta) \simeq (X',\Delta')$ over $C$. 
\end{enumerate}
\end{thm}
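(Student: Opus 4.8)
The plan is to prove part (1) — $S$-equivalence in the K-semistable case — and to deduce (2) and (3) from it. For the deductions: if $(X_0,\Delta_0)$ is K-polystable then every special test configuration of it with vanishing Donaldson--Futaki invariant is a product, so the common K-semistable degeneration $(\cW,\Delta_\cW)$ produced by (1) is isomorphic to $(X_0,\Delta_0)$, and symmetrically to $(X_0',\Delta_0')$ when the latter is K-polystable, which gives (2). If $(X_0,\Delta_0)$ is K-stable it has finite automorphism group and hence no $\mathbb{G}_m$-action, so its only product test configuration is the trivial one; since (1) exhibits $(\cW,\Delta_\cW)$ as a special test configuration of $(X_0,\Delta_0)$ with vanishing Futaki invariant, that test configuration is trivial, and unwinding the construction this forces the two relative anticanonical rings over $C$ to agree up to a twist by a line bundle pulled back from $C$, hence $X\simeq X'$ over $C$ compatibly with $\phi$; this gives (3).

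For (1), after a base change we may assume $C=\Spec R$ with $R$ a complete DVR with algebraically closed residue field $k$ and uniformizer $\varpi$, so that $X$ and $X'$ become two $\Q$-Gorenstein models over $R$ of their common generic fiber $(V,\Delta_V)$. Fix $r$ divisible enough that $L:=-r(K_V+\Delta_V)$ is a very ample line bundle, and set $W_m:=H^0(V,mL)$ and $S:=\bigoplus_{m\ge 0}W_m$. The two models give graded $R$-lattices $M_\bullet:=\bigoplus_m H^0(X,mL_X)$ and $M'_\bullet:=\bigoplus_m H^0(X',mL_{X'})$ inside $S$, and $\Q$-Gorenstein flatness together with Koll\'ar's condition identify $M_\bullet\otimes_R k$ with the anticanonical ring $R(X_0,\Delta_0)$ and $M'_\bullet\otimes_R k$ with $R(X_0',\Delta_0')$. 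The model $X'$ induces a filtration $\overline{\cF'}$ on $R(X_0,\Delta_0)$ by $\overline{\cF'}(\overline s):=\max\{\,j:\ s\in\varpi^{j}M'_m\text{ for some lift }s\in M_m\,\}$ for a nonzero class $\overline s\in M_m\otimes_R k$, and symmetrically $X$ induces $\overline{\cF}$ on $R(X_0',\Delta_0')$; since $X$ and $X'$ are proper $R$-models of one and the same variety, the elementary divisors of the pair $(M_m,M'_m)$ are linearly bounded in $m$, so $\overline{\cF'}$ and $\overline{\cF}$ are linearly bounded, and one has canonical identifications $\gr_{\overline{\cF'}}R(X_0,\Delta_0)\cong\overline{S}\cong\gr_{\overline{\cF}}R(X_0',\Delta_0')$, where $\overline{S}$ denotes the associated bigraded ring of the bifiltration $\cF^{a}\cap(\cF')^{b}$ on $S$ (symmetry of iterated associated gradeds). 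Thus the construction is symmetric in $X$ and $X'$.

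The crux — and the step I expect to be the main obstacle — is to show that $\overline{S}$ is a finitely generated $k$-algebra, i.e.\ that the filtrations $\overline{\cF'}$ and $\overline{\cF}$ are finitely generated, so that $(\cW,\Delta_\cW):=\Proj\overline{S}$ is a genuine projective scheme defining special test configurations $\mathcal T$ of $(X_0,\Delta_0)$ and $\mathcal T'$ of $(X_0',\Delta_0')$, each with central fiber $(\cW,\Delta_\cW)$. Finite generation of such mixed rings fails for general linearly bounded filtrations, so it must be extracted from the geometry: the strategy is to run the Minimal Model Program over $R$ in the spirit of Li--Xu's construction of special test configurations — starting from a common log resolution $Y\to X,\,X'$ and an appropriately chosen polarization, a terminating MMP over $R$ yields a $\Q$-factorial model with klt log Fano central fiber whose anticanonical ring is $\overline{S}$, which gives at once the finite generation of $\overline{S}$ and the fact that $(\cW,\Delta_\cW)$ is klt log Fano; the boundedness theorem \cite{Bir16} controls the complexity of this MMP. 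This is exactly where the present argument replaces the analytic inputs of \cite{LWX14,SSY16}.

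Granting finite generation, (1) follows at once. In degree $m$ the test configurations $\mathcal T$ and $\mathcal T'$ have the same dimension of sections (namely $\dim W_m$), while their $\mathbb{G}_m$-weights are the elementary divisors of $(M_m,M'_m)$ taken with opposite signs; hence the weight polynomials of $\mathcal T$ and $\mathcal T'$ are negatives of one another, and since the Donaldson--Futaki invariant depends only on the top two coefficients of the dimension polynomial and of the weight polynomial, $\mathrm{DF}(\mathcal T)=-\mathrm{DF}(\mathcal T')$. K-semistability of $(X_0,\Delta_0)$ gives $\mathrm{DF}(\mathcal T)\ge 0$, K-semistability of $(X_0',\Delta_0')$ gives $\mathrm{DF}(\mathcal T')\ge 0$, so both are zero. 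Therefore $(\cW,\Delta_\cW)$ is a klt log Fano obtained from each of $(X_0,\Delta_0)$ and $(X_0',\Delta_0')$ by a special test configuration with vanishing Futaki invariant — in particular $(\cW,\Delta_\cW)$ is itself K-semistable — so the two central fibers are $S$-equivalent. This proves (1), and (2) and (3) follow as above.
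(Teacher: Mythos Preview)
Your overall architecture matches the paper's: construct mutually dual filtrations on the two special-fiber section rings, identify their associated graded rings, use K-semistability to force both ``Futaki-type'' invariants to vanish, and then argue finite generation to obtain genuine special test configurations with a common central fiber. The paper's Proposition~\ref{p-isomgrF} and Proposition~\ref{p-TSa} are exactly your graded-ring identification and $\mathrm{DF}(\mathcal T)=-\mathrm{DF}(\mathcal T')$.

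The genuine gap is the finite generation step, which you describe as ``run an MMP over $R$ in the spirit of Li--Xu.'' This is the technical heart of the paper, and your one-sentence plan does not work as stated. The Li--Xu MMP in \cite{LX14} begins from a given test configuration and degenerates it to a special one; here there is no test configuration to start with, and it is not clear what boundary or polarization on a common resolution $Y\to X,X'$ would make an MMP terminate at a model whose anticanonical ring is your $\overline{S}$. The paper's route is quite different and substantially more delicate: first one uses $a=\lct$ (forced by $\beta=\beta'=0$) and Proposition~\ref{p-extract} to extract the divisor $\widetilde{X}'_0$ over $X$; restricting to the strict transform $V$ of $X_0$ produces a $\Q$-divisor $E$ whose support $F$ is shown, via Proposition~\ref{p-Qdivcomputing}, to be a \emph{single} prime divisor computing $\delta(X_0,\Delta_0)=1$; Theorem~\ref{t-delta=1} (itself proved via the cone/normalized-volume derivative formula of \cite{Li17,LWX18}) then gives that $F$ is dreamy; finally a relative cone argument (Lemmas~\ref{l-Wkextract}--\ref{l-diagram} and Proposition~\ref{p-F=F_E}) is needed to show $\cF=\cF_E$, i.e.\ that the filtration you wrote down actually agrees with the divisorial one. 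None of these steps is supplied by a generic MMP, and the invocation of \cite{Bir16} is a red herring.

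Two smaller issues. First, your deduction of (3) invokes ``K-stable $\Rightarrow$ finite automorphism group,'' but in this paper that is Corollary~\ref{c-aut}, proved \emph{from} Theorem~\ref{t-main}; the non-circular argument is simply that if $\phi$ does not extend then the construction in (1) produces a non-trivial special test configuration of $(X_0,\Delta_0)$ with vanishing Futaki invariant, contradicting K-stability directly. Second, you do not address why the boundary divisors match under the isomorphism $\cX_0\simeq\cX'_0$; in the paper this is a separate and nontrivial argument (Propositions~\ref{p-II'isom} and~\ref{p-initalcod2}), again using the relative cone to compare initial ideals up to codimension two.
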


\begin{rem}

\begin{enumerate}
\item The K-polystable case of Theorem \ref{t-main} follows immediately from the K-semistable case and  Definitions \ref{d-kpoly} and \ref{d-sequiv}.

\item By \cite{LWX18}, the K-semistable case of Theorem \ref{t-main} can be strengthened to say $(X_0,\Delta_0)$ and $(X_0',\Delta_0')$ have a common K-polystable degeneration.

\item A special case of Theorem \ref{t-main} was proved in  \cite[1.4]{Oda12} with the additional assumption that $\alpha(X_0,\Delta_0)>\dim(X_0)/ (\dim(X_0)+1)$ (see also \cite[5.7]{Che09}).
\end{enumerate}
\end{rem}

Theorem \ref{t-main} implies the following special case of Step (III.a).

\begin{cor}\label{c-aut}
Let $(X,\Delta)$ be log Fano pair. If $(X,\Delta)$ is K-stable, then $\Aut(X,\Delta)$ is finite.
\end{cor}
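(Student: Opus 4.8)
The plan is to deduce Corollary~\ref{c-aut} formally from part (3) of Theorem~\ref{t-main}: a nontrivial one-parameter subgroup of $\Aut(X,\Delta)$ would allow us to glue two copies of the constant family over a punctured affine line along an isomorphism that provably cannot extend across the puncture, contradicting separatedness. I expect no genuine obstacle once Theorem~\ref{t-main} is in hand; the only points needing (routine) care are choosing a one-parameter subgroup together with a parametrization that demonstrably fails to extend over the origin, and matching isomorphisms of the constant family over $C$ with $C$-points of $\Aut(X,\Delta)$.

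First I would record that $\Aut(X,\Delta)$ is a linear algebraic group: since $-(K_X+\Delta)$ is ample, for $m$ sufficiently divisible the very ample line bundle $L=\mathcal{O}_X(-m(K_X+\Delta))$ carries a canonical $\Aut(X,\Delta)$-linearization, and the resulting action on $V=H^0(X,L)$ exhibits $\Aut(X,\Delta)$ as a closed subgroup of $\mathrm{GL}(V)$. Hence $\Aut(X,\Delta)$ is finite if and only if its identity component $\Aut^{0}(X,\Delta)$ is trivial, so suppose for contradiction that $\Aut^{0}(X,\Delta)$ has positive dimension. A positive-dimensional connected linear algebraic group over an algebraically closed field contains a subgroup $H$ isomorphic to $\mathbb{G}_m$ or to $\mathbb{G}_a$. (In the $\mathbb{G}_m$ case one may already conclude directly: a nontrivial $\mathbb{G}_m\hookrightarrow\Aut(X,\Delta)$ gives a nontrivial product test configuration of $(X,\Delta)$, and then either it or its inverse has nonpositive Futaki invariant, contradicting K-stability. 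I would nonetheless treat both cases uniformly via Theorem~\ref{t-main}.) Choose a nonconstant morphism $\gamma\colon\mathbb{G}_m\to H$ such that the composite $\Phi\colon\mathbb{G}_m\xrightarrow{\gamma}H\hookrightarrow\Aut(X,\Delta)\hookrightarrow\mathrm{GL}(V)$ does not extend to a morphism $\mathbb{A}^1\to\mathrm{GL}(V)$: for $H\cong\mathbb{G}_m$ take $\gamma=\mathrm{id}$, so that after diagonalizing $\Phi(t)=\mathrm{diag}(t^{a_1},\dots,t^{a_N})$ with the $a_i$ not all zero and the matrix acquires a pole or a vanishing determinant at $t=0$; for $H\cong\mathbb{G}_a$ take $\gamma(t)=t^{-1}$, so that $\Phi(t)=\exp(t^{-1}N)$ for a nonzero nilpotent $N\in\mathrm{End}(V)$, which has a pole at $t=0$.

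Next I would run the separatedness machine. Put $C=\mathbb{A}^1$ with marked point $0\in C$ and $C^{\circ}=C\setminus 0$, and let $\pi=\pi'$ both be the constant $\mathbb{Q}$-Gorenstein family $(X,\Delta)\times C\to C$ of log Fano pairs. Glue the two families over $C^{\circ}$ by the $C^{\circ}$-isomorphism $\phi(x,t)=(\Phi(t)\cdot x,\,t)$, which respects the boundaries because $\Phi(t)\in\Aut(X,\Delta)$ for every $t\in C^{\circ}$. Both central fibers equal $(X,\Delta)$, which is K-stable and hence K-semistable, so Theorem~\ref{t-main}(3) applies and forces $\phi$ to extend to an isomorphism $\overline\phi\colon(X,\Delta)\times C\xrightarrow{\ \sim\ }(X,\Delta)\times C$ over $C$.

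Finally I would convert the existence of $\overline\phi$ into a contradiction. The automorphism functor of the projective scheme $X$ is representable by a group scheme $\mathbf{Aut}(X)$ locally of finite type over $k$, in which $\Aut(X,\Delta)$ — the stabilizer of the divisor $\Delta$ — is a closed subgroup scheme, and via the canonical linearization of $L$ there is a closed embedding $\Aut(X,\Delta)\hookrightarrow\mathrm{GL}(V)$. The $C$-automorphism $\overline\phi$ of the constant family $X\times C$ corresponds to a morphism $\widetilde\phi\colon C\to\mathbf{Aut}(X)$ whose restriction to the dense open $C^{\circ}$ is $\Phi$; since $\Phi$ takes values in the closed subscheme $\Aut(X,\Delta)$ and $C$ is reduced, $\widetilde\phi$ factors through $\Aut(X,\Delta)\hookrightarrow\mathrm{GL}(V)$. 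Thus $\Phi$ would extend to a morphism $\mathbb{A}^1\to\mathrm{GL}(V)$, contradicting the choice of $\gamma$. Therefore $\Aut^{0}(X,\Delta)$ is trivial and $\Aut(X,\Delta)$ is finite.
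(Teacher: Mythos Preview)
Your proof is correct and uses the same core idea as the paper: apply Theorem~\ref{t-main}(3) to two copies of the constant family $(X,\Delta)\times C$ glued over $C^\circ$ by an automorphism-valued map, and conclude that the map extends across $0$. The only difference is packaging: the paper observes that $\Aut(X,\Delta)$ is affine (being a linear algebraic group) and then uses Theorem~\ref{t-main}(3) to verify the valuative criterion of properness directly for an arbitrary map $C^\circ\to\Aut(X,\Delta)$, concluding that an affine proper group scheme is finite. This avoids your case analysis of $\mathbb{G}_m$ versus $\mathbb{G}_a$ and the explicit construction of a non-extending parametrization, though your version has the virtue of making the one-parameter subgroup visible.
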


\subsection{Moduli of uniformly K-stable Fano varieties}
We now specialize our study to the moduli of uniformly K-stable Fano varieties.
Consider the moduli functor $\mathcal{M}^{\rm uKs}_{n,V}$ that sends $S\in {\sf Sch}_k$ to 
\[
 \mathcal{M}^{\rm uKs}_{n,V}(S) =  \left\{
  \begin{tabular}{c}
\mbox{flat, proper morphisms  $X \to S$, whose geometric fibers  }\\
\mbox{ are $n$-dimensional uniformly K-stable $\Q$-Fano varieties }\\
\mbox{  of volume $V$, satisfying Koll\'ar's condition}
  \end{tabular}
\right\}.\]

\noindent Combining the following recent results:
\begin{enumerate}[(I$^{\rm u}$)]
\item \textsc{Boundedness}: Proved in  \cite{Jia17}, 
\item \textsc{Zariski openness}: Proved in  \cite{BL18}, and
\item \textsc{Separatedness (as a stack)}: Theorem \ref{t-main}.3,
\end{enumerate}
we obtain the following corollary. 
\begin{cor}\label{c-moduli}
The  functor $\mathcal{M}^{\rm uKs}_{n,V}$ is a separated Deligne-Mumford stack of finite type, which has a coarse moduli space $M^{\rm uKs}_{n,V}$ that is a separated algebraic space. 
\end{cor}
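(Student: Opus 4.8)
The plan is to deduce Corollary \ref{c-moduli} from the three ingredients (I$^{\rm u}$), (II$^{\rm u}$), (III$^{\rm u}$) by a largely formal argument, feeding them into standard stack-theoretic criteria. First I would assemble the Artin stack structure: by boundedness (I$^{\rm u}$), after fixing $N=N(n,V)$, every uniformly K-stable $\Q$-Fano variety of volume $V$ embeds via $|-NK_X|$ into a fixed projective space $\PP^{M}$, so all such $X$ (with their Koll\'ar-condition families) arise as fibers of the universal family over a locally closed subscheme of the relevant Hilbert scheme $\mathrm{Hilb}$; by Zariski openness (II$^{\rm u}$) the uniformly K-stable locus $Z \subset \mathrm{Hilb}$ is open in the locus of $\Q$-Fano fibers, hence is a finite-type scheme, and $\mathcal{M}^{\rm uKs}_{n,V} = [Z/\mathrm{PGL}_{M+1}]$ is a finite-type Artin stack. (Here one must be slightly careful that the moduli functor, defined via Koll\'ar's condition, really is represented by this quotient; as noted in the paper these are local issues resolved in \cite{Kol08,Kol19}.)

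Next I would upgrade "Artin stack" to "separated Deligne--Mumford stack." For the Deligne--Mumford property it suffices to show the diagonal is unramified, equivalently that automorphism groups of geometric points are finite and reduced; finiteness is exactly Corollary \ref{c-aut} applied fiberwise (a uniformly K-stable $\Q$-Fano pair is in particular K-stable), and reducedness/ étaleness of the automorphism group scheme follows since we work in characteristic zero (so all group schemes of finite type are smooth). Separatedness of the stack is a valuative criterion: given a DVR $R$ with fraction field $K$, two families over $\Spec R$ agreeing over $\Spec K$ must be isomorphic over $\Spec R$. After a base change to make the base a smooth pointed curve germ $0 \in C$, this is precisely the content of Theorem \ref{t-main}.3 — a K-stable (a fortiori when uniformly K-stable) special fiber forces the isomorphism over the punctured curve to extend across the origin. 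One subtlety is passing from an arbitrary DVR to the smooth-curve setting of Theorem \ref{t-main}; this is handled by the usual spreading-out and Noetherian approximation arguments, noting the valuative criterion for separatedness need only be checked for DVRs essentially of finite type (even complete ones) over $k$.

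Finally, the coarse moduli space: a separated Deligne--Mumford stack of finite type over a field admits a coarse moduli space which is a separated algebraic space, by the Keel--Mori theorem (in the form valid for Deligne--Mumford, or Artin, stacks with finite inertia). Since $\mathcal{M}^{\rm uKs}_{n,V}$ has finite inertia (automorphism groups are finite by Corollary \ref{c-aut}) and is separated, Keel--Mori produces $M^{\rm uKs}_{n,V}$ together with the morphism $\mathcal{M}^{\rm uKs}_{n,V} \to M^{\rm uKs}_{n,V}$, and the separatedness of the algebraic space $M^{\rm uKs}_{n,V}$ follows from that of the stack. The main obstacle in all of this is not the formal packaging but the geometric input it rests on: essentially everything reduces to Theorem \ref{t-main} (for separatedness) and Corollary \ref{c-aut} (for the Deligne--Mumford and finite-inertia properties), so the real work has already been done; the remaining task is the careful but routine verification that the moduli functor with Koll\'ar's condition is represented as claimed and that the valuative criterion can be checked on the smooth-curve germs to which Theorem \ref{t-main} applies.
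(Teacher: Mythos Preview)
Your proposal is correct and follows essentially the same approach as the paper: boundedness and openness to present $\mathcal{M}^{\rm uKs}_{n,V}$ as a global quotient $[W/\mathrm{PGL}]$ of a finite-type scheme, Theorem~\ref{t-sep} (equivalently Theorem~\ref{t-main}.3) for separatedness and finite automorphisms, and Keel--Mori for the coarse moduli space. The paper spells out the locally closed stratification a bit more explicitly (using \cite{HK04} to cut out the locus where $\omega^{[-M]}_{X_T/T}$ is the twist of $\cO(1)$ by a line bundle from the base, and \cite{Kol08} or \cite{AH11} for the Koll\'ar-condition decomposition), but you correctly flag these as the routine verifications needed to represent the functor.
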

One still missing property is 

\smallskip
\begin{enumerate}[(IV$^{\rm u}$)]
\item \textsc{Quasi-projectivity}: $M^{\rm uKs}_{n,V}$ is quasi-projective. 
\end{enumerate}

\smallskip

\noindent{Significant progress on this problem was made in \cite{CP18}.

\medskip

\subsection{Summary of the paper}
The original definition of K-stability in \cite{Tia97, Don02} is defined in terms of the sign of  the generalized Futaki invariant on all test configurations or at least special test configurations (see \cite{LX14}).
Recently, there has been tremendous progress in reinterpreting K-stability in terms of invariants associated to valuations rather than test configurations.

 More specifically, in \cite{BHJ17}, the data of a test configuration was translated into the data of a filtration and it was shown that a nontrivial special test configuration yields a divisorial valuation.
 Then in a series of papers \cite{Fuj16a, Fuj16, Fuj18} of K. Fujita, all divisorial valuations were studied and an invariant $\beta$ was defined for each divisorial valuation.  
After \cite{Li17}, it became more natural to extend the setup to all valuations over the log Fano variety rather than only considering divisorial valuations (see also \cite{LX16, BJ17}). 
Moreover, a characterization  of K-stability notions in terms of the sign of $\beta$-invariant for divisorial valuations was proved in \cite{Li17, Fuj16} and lead to another characterization by the $\delta$-invariant in \cite{FO16, BJ17}.
These interpretations of K-stability using valuations have made it easier to apply techniques from birational geometry, 
especially the Minimal Model Program, to the study of K-stability.

In Section \ref{s-prelim}, we will have a short discussion on the above materials.
More precisely, we will provide information on the language of valuations and filtrations following \cite{BHJ17, Fuj17, Li17}
 and the invariants $\beta$ and $\delta$ associated to them following \cite{Fuj17, FO16, BJ17}. We also discuss the normalized volume function from \cite{Li18} and its relation with the K-stability of Fano varieties (see \cite{Li17, LX16}).

To proceed with our discussion, let us define the above invariants. 
Let $(X,\Delta)$ be a log Fano pair. 
Given a divisor $E$ over $X$ (i.e. $E\subset Y$ is a prime divisor, where $Y$ is a normal variety with a proper birational morphism $\pi:Y \to X$),  the $\beta$-invariant of $E$ is given by
\[
\beta_{X,\Delta}(E) : = A_{X,\Delta}(E)(-K_X-\Delta)^{n} - {\int^{\infty}_0 \vol(\pi^*(-K_X-\Delta)-tE)dt}
\]
where  $A_{X,\Delta}(E)$ is the log discrepancy of $E$. 
This invariant was defined in \cite{Fuj18} and the K-(semi)stability of $(X,\Delta)$ can be phrased in terms of the positivity of $\beta_{X,\Delta}(E)$ \cite{Fuj16, Li17}. 

Next, is the  $\d$-invariant of $(X,\Delta)$, which, as defined in \cite{FO16},  measures log canonical thresholds of a certain classes of anti-log canonical divisors of $(X,\Delta)$.
It is shown in \cite{BJ17} that
\begin{equation}\label{eq-deltaval}
\delta(X,\Delta)=\inf_{E} \frac{A_{X,\Delta}(E)(-K_X-\Delta)^{n}}{\int^{\infty}_0 \vol(\pi^*(-K_X-\Delta)-tE)dt}.
\end{equation}
Hence, we say that a divisor $E$ over $X$ computes $\d(X,\Delta)$ if it achieves the infimum in \eqref{eq-deltaval}.
The pair  $(X,\Delta)$ is uniformly K-stable (resp. K-semistable) if and only if $\delta(X,\Delta)>1$ (resp. $\delta(X,\Delta)\ge 1$)\cite{FO16,BJ17}.

\bigskip

In Section \ref{s-uniK}, before attacking Theorem \ref{t-main} in full generality, we consider the special case  in which $(X_0,\Delta_0)$ is uniformly K-stable and $(X'_0,\Delta'_0)$ is K-semistable. In this case, we provide a short proof of the separatedness result  by using properties of the $\d$-invariant to reduce the question to the well known separatedness statement for the moduli functor of klt log Calabi-Yau pairs (Proposition \ref{p-filling}). 
This argument is more straightforward than the general case and takes a slightly different approach. We hope this perspective can be applied in other cases.

\bigskip 
 
To prove Theorem \ref{t-main} in  full generality is more involved. 
We need to study the case  when the $\d$-invariants of the special fibers equal one. 
In general, analyzing the valuation computing $\d=1$ is quite subtle. 
For instance, the following statement has been conjectured by experts.
\begin{conj}\label{conj-special}
Let $(X,\Delta)$ be a log Fano pair. If $\delta(X,\Delta)\le 1$ then $\d(X,\Delta)$ is  computed by a divisor over $X$ and any such divisor is dreamy. 
\end{conj}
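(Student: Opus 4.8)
To prove Conjecture~\ref{conj-special}, the plan is to proceed in three stages: first, that $\delta(X,\Delta)$ is always computed by some (a priori only quasi-monomial) valuation; second, that when $\delta(X,\Delta)\le 1$ this valuation may be taken to be divisorial; and third, that every divisor computing $\delta(X,\Delta)\le 1$ is dreamy. For the first stage, recall that $\delta(X,\Delta)=\lim_m\delta_m(X,\Delta)$, where $\delta_m(X,\Delta)$ is the infimum of $\lct(X,\Delta;D)$ over $m$-basis type divisors $D\sim_\Q-(K_X+\Delta)$ \cite{FO16,BJ17}, and that for each $m$ this infimum is attained, computed by a divisor $E_m$ over $X$ arising as a log canonical place of a member of a bounded family. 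Since divisors computing log canonical thresholds of a bounded family of pairs are themselves bounded --- here one invokes boundedness of $\Q$-complements \cite{Bir16} together with the ACC for log canonical thresholds --- after rescaling so that $A_{X,\Delta}(E_m)=1$ the valuations $\ord_{E_m}$ all lie on finitely many fixed log smooth models, so a subsequence converges to a quasi-monomial valuation $v$. Lower semicontinuity of $A_{X,\Delta}$ and continuity of the functional $S_{X,\Delta}$ --- the normalized integral of volumes in the denominator of \eqref{eq-deltaval} --- along this family then give $A_{X,\Delta}(v)\le\delta(X,\Delta)\,S_{X,\Delta}(v)$, hence equality, so $v$ computes $\delta(X,\Delta)$.

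For the second stage I would analyze the minimizer $v$ under the hypothesis $\delta(X,\Delta)\le 1$. The crux --- and the step I expect to be the main obstacle --- is to show that the associated graded ring $\gr_v R$ of the section ring $R=\bigoplus_m H^0\bigl(X,-mr(K_X+\Delta)\bigr)$ (with $r$ chosen so that $-r(K_X+\Delta)$ is Cartier) is finitely generated. The expected mechanism is a minimal model program adapted to $v$ in which the hypothesis $\delta(X,\Delta)\le 1$ is precisely what keeps the relevant boundary log canonical --- rather than merely klt --- along $v$, so that the program terminates with the desired finitely generated output. Granting this, $X_0=\Proj\gr_v R$ is a degeneration of $(X,\Delta)$ to a log Fano pair $(X_0,\Delta_0)$ equipped with a torus action of dimension the rational rank of $v$, on which $v$ induces a toric valuation $v_0$ with $A_{X_0,\Delta_0}(v_0)=A_{X,\Delta}(v)$ and $S_{X_0,\Delta_0}(v_0)=S_{X,\Delta}(v)$. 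Finite generation makes $A_{X_0,\Delta_0}$ and $S_{X_0,\Delta_0}$ piecewise linear and piecewise polynomial, respectively, on the rational polyhedral cone of toric valuations on $X_0$; minimizing their ratio over this cone, and if necessary iterating the degeneration until it stabilizes, produces a rational --- i.e. divisorial --- valuation computing the infimum, which transports back to a divisor $E$ over $X$ with $A_{X,\Delta}(E)=\delta(X,\Delta)\,S_{X,\Delta}(E)$.

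For the third stage, let $E$ be any divisor computing $\delta(X,\Delta)\le 1$. Passing to a limit of the $m$-basis type $\Q$-divisors whose log canonical thresholds realize $\delta_m(X,\Delta)$ and are computed by valuations converging to $\ord_E$, one obtains an effective $\Q$-divisor $D_\infty\sim_\Q-(K_X+\Delta)$ for which $(X,\Delta+\delta(X,\Delta)\,D_\infty)$ is log canonical with $E$ as a log canonical place; in particular $\ord_E$ is an lc place of a log canonical Fano pair $(X,\Delta+B)$ with $B\sim_\Q\lambda\bigl(-(K_X+\Delta)\bigr)$ and $0<\lambda\le 1$. Dreaminess of $E$ --- finite generation of the two-parameter algebra $\bigoplus_{m,j\ge 0}H^0\bigl(Y,\,mr\,\pi^*(-(K_X+\Delta))-jE\bigr)$ for a model $\pi:Y\to X$ containing $E$ --- then follows by extracting $E$ on a suitable birational modification of $X$ and running the minimal model program to build a special test configuration in the style of \cite{LX14}, which is once more an instance of finite generation in the minimal model program. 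The single hard ingredient running through all three stages is finite generation of these valuation-adapted graded algebras; the remainder is compactness, lower semicontinuity of $A_{X,\Delta}$ together with continuity of $S_{X,\Delta}$, and routine minimal model program bookkeeping.
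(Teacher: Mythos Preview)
The statement you are attempting to prove is labeled a \emph{Conjecture} in the paper, and the paper does not prove it. The authors explicitly write that they ``will prove some special cases of Conjecture~\ref{conj-special}'': the main such result is Theorem~\ref{t-delta=1}, which shows only that if $(X,\Delta)$ is K-semistable and a divisor $E$ \emph{already} satisfies $A_{X,\Delta}(E)/S(E)=\delta(X,\Delta)=1$, then $E$ is dreamy and induces a special test configuration. The paper says nothing about existence of a divisorial minimizer, nor about the case $\delta(X,\Delta)<1$.

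Your proposal is therefore not comparable to a proof in the paper; it is a strategy for an open problem. As a strategy it is broadly the one that was eventually carried out in later work, but as written it contains a genuine gap that you yourself flag: the finite generation of $\gr_v R$ for a quasi-monomial minimizer $v$. You say ``the expected mechanism is a minimal model program adapted to $v$'' and then write ``granting this''; but this is precisely the heart of the matter and nothing in the paper (or in \cite{BCHM10}, \cite{LX14}) supplies it for valuations of rational rank $>1$. Your third-stage argument for dreaminess also has a gap: you assert one can produce an effective $\Q$-divisor $D_\infty\sim_\Q -(K_X+\Delta)$ making $E$ an lc place of $(X,\Delta+\delta\,D_\infty)$ by ``passing to a limit'' of $m$-basis type divisors, but limits of divisors need not exist as divisors, and the paper's own argument for the dreaminess part (Theorem~\ref{t-delta=1}) proceeds instead via the cone construction and a normalized-volume derivative computation, not by producing such a $D_\infty$. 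Finally, your first-stage compactness argument invokes ``boundedness of $\Q$-complements'' to control the models on which the $E_m$ live; this is not established in the references available and is itself nontrivial.
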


The special case of  Conjecture \ref{conj-special} when $\d(X,\Delta)=1$ implies that K-stability is equivalent to the apparently stronger notion of uniform K-stability. 
This is known for smooth Fano varieties by \cite{BBJ15}, but the proof relies on analytic tools, in particular the existence of K\"ahler-Einstein metrics. 

\smallskip

In Section \ref{s-delta1}, we will prove some special cases of Conjecture \ref{conj-special} which are needed in our proof of Theorem \ref{t-main}. The first result is that if $(X,\Delta)$ is a log Fano variety with $\d(X,\Delta)=1$, then any divisor computing $\d(X,\Delta)$ is necessarily dreamy and  induces a special test configuration of $(X,\Delta)$.
The proof relies on the MMP techniques developed in \cite{LWX18}, which  build upon work in \cite{Li17, LX16,LX17}.
Specifically, we consider the cone over our log Fano pair  and use the calculation in \cite{Li17, LX16} which shows that $\beta_{X,\Delta}(E)$ equals the derivative of the normalized volume function on the valuation space of the cone along the path given by the interval connecting the divisorial valuation associated to the pull back of $E$ and the canonical valuation. 
A careful study as in \cite[Theorem 3.2]{LWX18} shows that $E$ is indeed a dreamy divisor and induces a special test configuration.
In Section \ref{ss-intersection} and \ref{ss-qdivisor}, we also address the situations when the $\delta$-invariant can be calculated by an ideal or a $\mathbb{Q}$-divisor. These results may be of independent interest.

\medskip

Section \ref{s-kes} is the core of this paper and where we prove Theorem \ref{t-main}. 
The majority of the work in this section is to construct the S-equivalence stated in the theorem.  

\medskip

\noindent {\bf Step 1:}
 We first observe that a pair of two different degenerations will induce filtrations on each other's section rings. Furthermore, the associated  graded rings of the filtrations are isomorphic with a grading shift matching the calculation of $\beta$-invariant. 

Let us explain the above construction in more detail. Assume we have two $\Q$-Gorenstein families of log Fano pairs $\pi:(X,\Delta)\to C$ and $\pi':(X',\Delta') \to C$ over a smooth affine curve $C$ and 
an isomorphism
\[
\phi: (X,\Delta) \times_C C^\circ  \to (X',\Delta') \times_C C^\circ,
\] 
that does not extend to an isomorphism over $C^\circ = C\setminus 0 $. Fix $r$ so that $L:=-r(K_X+\Delta)$ and $L:=-r(K_{X'}+\Delta')$ are Cartier. 
We choose  a proper birational model over $X$ and $X'$ 
\begin{center}
\begin{tikzcd}[column sep=scriptsize, row sep=scriptsize]
  & \widehat{X}  \arrow[dr, "\psi'"] \arrow[dl,swap,"\psi"]& \\
  X  \arrow[rr, dashrightarrow, "\phi"] & & X'
   \end{tikzcd}
   \end{center}
and write $\widetilde{X}_0$ and $\widetilde{X}'_0$ for the birational transforms of $X_0$ and $X'_0$ on $\widehat{X}$. 
The divisor $\widetilde{X}'_0$ induces a filtration $\cF$ on the section ring of $(X_0,\Delta_0)$ defined by
\[
s \in \cF^p H^0(X_0, mL_0 ) \quad \text{ if and only if} \quad  \ord_{\widetilde{X}'_0}(\tilde{s}) \geq p 
\]
for some (non-unique) extension $\tilde{s} \in H^0(X,mL )$. Then we define
\[
\beta := (-K_{X_0}-\Delta_0)^n  A_{X,\Delta+ X_0}( \widetilde{X}'_0)   - \lim_{m \to \infty}  \int_0^\infty \frac{ \dim \cF^{mx} H^0(X_0,mL_0 )}{r^{n+1} m^n /n!} \, dx 
. \]
Similarly, we can define a filtration $\cF'$ of the section ring of $(X'_0,\Delta'_0)$ and the value $\beta'$ using the divisor $\widetilde{X}_0$. The construction here can be viewed as a relative version of the one  in \cite[Section 5]{BHJ17}, where they consider a test configuration and a trivial family. 

Next, we observe that there is an isomorphism of the associated graded rings of the filtrations
\begin{eqnarray}\label{e-isograde}
 \bigoplus_{m \in \N}\bigoplus_{p\in \Z}{\rm gr}^p_\cF H^0(X_0,mL_0) \overset{\varphi}{\longrightarrow} \bigoplus_{m \in \N}\bigoplus_{p\in \Z}{\rm gr}_{\cF'}^{mr(a+a')-p}H^0(X'_0, mL'_0),
\end{eqnarray}
where $a: = A_{X,\Delta+X_0}( \widetilde{X}'_0)$ and $a': = A_{X',\Delta'+X'_0}( \widetilde{X}_0)$.
Using this isomorphism, we deduce that $\beta+\beta'=0$.

Now if we assume $(X_0,\Delta_0)$ and $(X'_0,\Delta'_0)$ are K-semistable, then the $\beta$-invariant of any divisor over $X_0$ or $X'_0$ is non-negative \cite{Fuj16, Li17}. A similar result is extended to filtrations in \cite{BL18}.\footnote{This is also independently obtained by Chi Li and Xiaowei Wang in \cite{LW18}.} We can then conclude that $\beta = \beta'=0$. 

\medskip

\noindent {\bf Step 2:} At this point, we know  $X_0$ and $X'_0$ have a common degeneration. 
Indeed,  the Rees construction gives degenerations 
\[
X_0 \leadsto
 \cX_0 : = \Proj \bigg( \bigoplus_{ m \in \N}  \bigoplus_{p \in \Z} \gr^p_{\cF} H^0(X_0, mL_0) \bigg)\]
and
\[
X_0' \leadsto \cX'_0 : = \Proj  \bigg( \bigoplus_{ m \in \N} \bigoplus_{ p \in \Z }  \gr_{\cF'}^p H^0(X'_0, mL'_0) \bigg). 
\]
By \eqref{e-isograde}, the degenerations $\cX_0$ and $\cX'_0$ are isomorphic.

An immediate concern is that the above graded rings are not necessarily finitely generated. 
(Note that notions of K-stability have been investigated in the setting of non-finitely generated filtrations \cite{WN12, Sze15}.)
Since we aim to prove $(X_0,\Delta_0)$ and $(X'_0,\Delta'_0)$  have a common degeneration to a K-semistable log Fano pair, we must show that the filtrations $\cF$ and $\cF'$ are finitely generated and induce special test configurations with generalized Futaki invariant zero. By \cite[3.1]{LWX18}, this will imply that the degenerations are
K-semistable log Fano pairs. 

To proceed, we rely on the fact that our filtrations are induced by divisors over our families. 
More precisely, we use that $\beta=0$ to show that there exists an extraction  
$Y \to X$  of $\widetilde{X}'_0$ and the fiber $Y_0 = V \cup W$, 
where $V$ and $W$ are the birational transforms of $X_0$ and $X'_0$. Now, we set $E =W\vert_{V}$ and observe that $E$ induces a filtration $\cF_E$ on the section ring of $(X_0,\Delta_0)$. 
We then show $F:=\Supp(E)$ is a prime divisor and $\beta_{X_0,\Delta_0}(F)=0$.
Using Theorem \ref{t-delta=1}, we see  $\cF_E$ is finitely generated and the corresponding degeneration of $(X_0,\Delta_0)$ is a special test configuration with
generalized Futaki invariant zero.  

Next, we seek to show that the filtrations $\cF$ and $\cF_E$ are equal. 
This statement is equivalent to the surjectivity of certain restriction maps and is non-trivial. 
To achieve the result,  we take the relative cone of $(X,\Delta)$ over $C$ and run an analysis similar to the proof of Theorem \ref{t-delta=1}. 
After completing this argument, we can conclude that the degenerations $(\cX_0,\cD_0)$ and $(\cX'_0,\cD'_0)$ are naturally K-semistable pairs. 

Finally, we need to show that the isomorphism $\cX_0 \simeq \cX'_0$
sends the degeneration of $\Delta_0$ to the degeneration of $\Delta'_0$, so that we get an isomorphism of pairs. 
 To verify this, we choose a divisor $B \subseteq \Supp(\Delta)$ and write $B'\subseteq \Supp(\Delta')$ for its birational transform. 
 Now, $B_0$ degenerates to a divisor on $\cX_0$ that corresponds to the initial ideal $\init(I_{B_0})$ in the associated graded ring. 
 Rather than showing  $\varphi (\init(I_{B_0})) =\init(I_{B'_0})$, we introduce auxiliary ideals $I$ and $I'$ such that the equality $\varphi(I) = I'$ is clear.
(The ideal $I$  is defined by restricting elements of the relative section ring  that vanish to certain orders along $B$ and $\widetilde{X}'_0$.)
Using the relative cone construction again,
we show that $I$ and $I'$ agree with $\init(I_{B_0})$ to $\init(I_{B'_0})$ at codimension one points.
We can then conclude that the desired isomorphism of boundaries holds.

\bigskip

\noindent{\bf Acknowledgement: } We are grateful to 
Roman Bezrukavnikov,  
Giulio Codogni,
Tommaso de Fernex,  
Christopher Hacon,
Mattias Jonsson, 
J\'anos Koll\'ar,
Chi Li, 
Yuchen Liu,
Mircea Musta\c{t}\u{a}, 
Yuji Odaka, 
Xiaowei Wang, and 
Jun Yu for helpful conversations and comments on previous drafts of this paper. 
A large part of the work on this paper was completed when CX was visiting Institut Henri Poincar\'e as part of the Poincar\'e Chair program.
 He thanks the institute for the wonderful  environment and Claire Voisin for her hospitality.  
 Finally, we are indebted to the anonymous referees whose comments improved this paper considerably.
 
HB is partially supported by NSF grant DMS-1803102.
CX is partially supported by the National Science Fund for Distinguished Young Scholars (NSFC 11425101) ``Algebraic Geometry".

\section{Preliminaries on valuations and K-stability}\label{s-prelim}

\subsection{Conventions} 
We work over an algebraically closed  field $\mathbbm{k}$ of characteristic 0.
We follow the terminologies in \cite{KM98, Kol13}.
A \emph{pair} $(X,\Delta)$ is composed of a normal variety $X$ and an effective $\Q$-divisor $\Delta$
on $X$ such that $K_X+\Delta$ is $\mathbb{Q}$-Cartier. 
See \cite[2.34]{KM98} for the definitions of \emph{klt}, \emph{plt}, and \emph{lc} pairs.
A pair $(X,\Delta)$ is \emph{log Fano} if $X$ is projective, $(X,\Delta)$ is klt, and $-K_X-\Delta$ is ample. 
A variety $X$ is {\it $\mathbb{Q}$-Fano} if $(X,0)$ is log Fano.

\begin{defn}
A \emph{$\Q$-Gorenstein family of log Fano pairs} $\pi:(X,\Delta) \to C$ over a smooth curve $C$ is composed of a flat proper morphism $\pi : X\to C$ and an effective $\Q$-divisor $\Delta$, not containing any fiber of $\pi$, satisfying: 
\begin{enumerate}
\item $\pi$ has normal, connected fibers (hence, $X$ is normal as well)
\item $-K_{X} -\Delta$ is $\Q$-Cartier and $\pi$-ample, and
\item $(X_t,\Delta_t)$ is klt for all $t\in C$. 
\end{enumerate}
\end{defn}

\subsection{Valuations}
Let $X$ be a variety. A valuation on $X$ will mean a valuation $v: K(X)^\times \to \R$  that is trivial on $\mathbbm{k}$ and has center on $X$. Recall, $v$ has center on $X$ if there exists a point $\xi\in X$ such that $v\geq 0$ on $\cO_{X,\xi}$ and $>0$ on the maximal ideal. Since $X$ is assumed to be separated, such a point $\xi$ is unique, and we say $v$ has \emph{center} $c_X(v): = \xi$.  
See \cite[3.1]{JM12} for the definition of quasimonomial valuations.

Following  \cite{JM12,BdFFU15}, we write $\Val_{X}$ for the set of valuations on $X$ and $\Val_X^*$ for the set of non-trivial ones. To any valuation $v\in \Val_{X}$ and $p\in \N$, there is an associated \emph{valution ideal} $\fa_{p}(v)$ .
For an affine open subset $U\subseteq X$, $\fa_p(v)(U) = \{ f\in \cO_X(U) \, \vert \, v(f) \geq p \}$ if $c_X(v) \in U$ and   $\fa_p(v)(U) =  \cO_X(U)$ otherwise.
 
For an ideal $\fa \subseteq \cO_X$ and $v\in \Val_X$, we set
\[
v(\fa) : = \min \{ v(f) \, \vert\, f\in \fa \cdot \cO_{X,c_X(v)} \} \in [0, +\infty].\]
We can define $v(s)$ when $\mathcal{L}$ is a line bundle on $X$ and $s\in H^0(X,\mathcal{L})$. After trivializing $\mathcal{L}$ at $c_X(v)$, we set $v(s)=v(f)$, where $f$ is the local function corresponding to $s$ under this trivialization (this is independent of choice of trivialization).

\subsubsection{Divisors over $X$}
Let $X$ be a variety and $\pi:Y \to X$ be a proper birational morphism, with $Y$ normal. 
A prime divisor $E \subset Y$ defines a valuation $\ord_E: K(X)^\times \to \Z$ given by order of vanishing at $E$. Note that $c_X(\ord_E)$ is the generic point of $\pi(E)$ and, assuming $X$ is normal, $\fa_p(v) = \pi_*\cO_Y(-pE)$.  

We identify two such prime divisors on  $Y_1$ and $Y_2$  as above if one is the birational transform of the other. Equivalently, they induce the same  valuation of $K(X)$. A \emph{divisor over} $X$ is  an equivalence class given by this relation.

\subsubsection{Log discrepancies}
Let $(X,\Delta)$ be a  pair. We write 
$$A_{X,\Delta}\colon \Val_X^* \to \R \cup \{ +\infty \} $$ 
for  the log discrepancy function with respect to  $(X,\Delta)$ as in \cite{JM12, BdFFU15}
(see \cite{Blu18b} for the case when $\Delta\neq 0 $).

When $\pi:Y \to X$ is a proper birational morphism with $Y$ normal and $E\subset Y$ a prime divisor, 
\[
A_{X,\Delta}( \ord_E) =  1+ \coeff_{E}\left( K_{Y}- \pi^*(K_X+\Delta) \right). 
\] We will often write $A_{X,\Delta}(E)$ for the above value.

The function $A_{X,\Delta}$ is homogenous of degree $1$, i.e. $A_{X,\Delta}(\lambda v ) = \lambda \cdot  A_{X,\Delta}(v)$ for $\lambda\in \R_{>0}$ and $v\in \Val_{X}$.
 A pair $(X,\Delta)$ is klt (resp., lc) if and only if $A_{X,\Delta}(v)>0$ (resp., $\geq0$) for all $v\in \Val_X^*$.

\subsubsection{Graded sequences}
A \emph{graded sequence of ideals}  $\fa_\bullet = (\fa_p)_{p\in \Z_{>0}}$ on a variety $X$ is a sequence of ideals on $X$ satisfying $\fa_p\cdot \fa_q\subseteq \fa_{p+q}$ for all $p,q\in \Z_{>0}$. By convention, $\fa_0 = \cO_X$.
We set $M(\fa_\bullet) : = \{ p \in \Z_{>0} \, \vert \, \fa_p \neq  (0) \}$ and  always assume $M(\fa_\bullet)$ is nonempty. If $v\in \Val_X^*$, then $\fa_\bullet(v)$ is a graded sequence of ideals.

Let $\fa_\bullet$ be a graded sequence of ideals on $X$ and $v\in \Val_X$. It follows from Fekete's Lemma  that the limit
\[
v(\fa_\bullet) :=  \lim_{M(\fa_\bullet) \ni m \to \infty}  \frac{v(\fa_m)}{m}
\]
exists and equals  $\inf_{m \geq 1} \frac{v(\fa_m)}{m}$; see \cite[\S2.1]{JM12}.

Let $x\in X$ be a closed point. If $\fa_\bullet$ is a graded sequence of ideals on $X$ and each ideal $\fa_p$ is $\mathfrak{m}_x$-primary, we set 
\[
\mult(\fa_\bullet) := \lim_{p \to \infty} \frac{\dim_{\mathbbm{k}} (\cO_X/\fa_p)}{p^n/n!}.\]
If $v\in \Val_X$ has center $\{x\}$, then $\fa_p(v)$ is $\mathfrak{m}_x$-primary for each $p>0$. In this case, we call $\vol(v): =\mult(\fa_\bullet(v))$ the \emph{volume} of $v$. 

\subsubsection{Log canonical thresholds}
Let $(X,\Delta)$ be an lc pair. Given a nonzero ideal $\fa \subseteq \cO_{X}$, the log canonical threshold of $\fa$ is given by  
\[
\lct(X,\Delta;\fa):= \sup \{c \in \Q_{\geq 0} \, \vert\, (X,\Delta+ \fa^c) \text{ is lc} \}.
\]
If $\fa_\bullet$ a graded sequence of ideals on $X$, the log canonical threshold of $\fa_\bullet$ is given by
\[
\lct(X,\Delta; \fa_\bullet) := \lim_{M(\fa_\bullet) \ni m \to \infty} m \cdot \lct(X,\Delta;\fa_m).\]
Fekete's Lemma implies that the above limit exists an equals
$  \sup_{m} m \cdot \lct(X,\Delta;\fa_m)$ \cite[2.5]{JM12}.

It is straightforward to show 
$
\lct(X,\Delta; \fa_\bullet) \leq  \frac{A_{X,\Delta}(v)}{v(\fa_\bullet)},
$
for $v \in \Val_{X}^*$ satisfying $0 \neq A_{X,\Delta}(v) < +\infty$. 
Hence, if $v\in \Val_{X}^*$ satisfies $A_{X,\Delta}(v) \neq 0$, then
\begin{equation}\label{e-lctA}
\lct(X,\Delta;\fa_\bullet(v)) \leq A_{X,\Delta}(v), 
\end{equation}
since  $v(\fa_\bullet(v))=1$ \cite[3.4.9]{Blu18b}.

\subsubsection{Extractions}

Let $E$ be a divisor over a normal variety $X$. 
We say that  $\mu:X_E \to X$ is an \emph{extraction} of $E$ if 
$\mu$ is a proper birational morphism with $X_E$ is normal,
 $E$ arises as a prime divisor  $E \subset X_E$, and $-E$ is $\mu$-ample.

Note that if $\mu:X_E \to X$ is an extraction of $E$, then $E \supseteq  \Exc(\mu)$
and equality holds if ${\rm codim}_{X}(  c_X(\ord_E) ) \geq 2$. 
Indeed,  Lemma \ref{l-antiample} implies that if $p\in \Z_{>0}$ is sufficiently  divisible, then $\mu$ is the blowup along $\fa_{p}(\ord_{E})$ 
and $\fa_{p}(\ord_E) \cdot \cO_Y = \cO_{Y}(-pE)$.
\medskip

The following technical statement gives a criterion for when an exceptional divisor may be extracted. The criterion will be used repeatedly in Section \ref{s-kes}. 

\begin{prop}\label{p-extract}
Let $(X,\Delta)$ be a klt pair or a plt pair such that $\lfloor\Delta\rfloor=S$ is a non-zero $\mathbb{Q}$-Cartier divisor. 
If $E$ is a divisor over $X$ satisfying 
$$ a:= A_{X,\Delta}(E) - \lct(X,\Delta; \fa_\bullet(\ord_E)) <1,$$ then 
there exists an extraction $\mu:X_E \to X$ of $E$ and $(X_E,\mu^{-1}_{*}(\Delta) + (1-a) E )$ is lc.
\end{prop}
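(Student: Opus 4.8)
The plan is to produce the extraction via a relative Minimal Model Program on a log resolution, using the hypothesis $a < 1$ to guarantee the required contraction exists and to control the discrepancies of the resulting model. First I would take a log resolution $\pi : Y \to X$ of $(X,\Delta)$ on which $E$ appears as a prime divisor (possible after further blowup), and write $K_Y + \pi_*^{-1}\Delta + (1-a)E = \pi^*(K_X+\Delta) + \sum_i b_i E_i$, where the sum runs over the $\pi$-exceptional divisors other than $E$ (note the coefficient of $E$ on the right is $A_{X,\Delta}(E) - 1 - (1-a) = \lct(X,\Delta;\fa_\bullet(\ord_E)) - 1 < 0$ in the klt case, since $\lct < A_{X,\Delta}(E) \le A_{X,\Delta}(E)$ and $a<1$ forces $\lct > A_{X,\Delta}(E)-1 \ge$ the relevant bound). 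The key numerical input is that the coefficient $1-a$ assigned to $E$ is chosen precisely so that the pair $(Y, \pi_*^{-1}\Delta + (1-a)E + \sum \max(b_i,0)E_i)$ — or a suitable modification where we also add back the other exceptionals with coefficient $1-\epsilon$ — is klt (or dlt), and so that running a $(K + \text{boundary})$-MMP over $X$ terminates with a model on which only $E$ survives as an exceptional divisor.

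More precisely, I would run the MMP over $X$ for the pair on $Y$ with boundary chosen so that $E$ has negative discrepancy-type coefficient while all other $\pi$-exceptional divisors have coefficient $1$ (or $1-\epsilon$); by standard results (e.g. the machinery of \cite{BCHM}, which I may invoke as it is surely referenced earlier or is standard background) this MMP terminates and contracts exactly the divisors with coefficient $\ge 1$ in the exceptional locus, leaving a model $\mu : X_E \to X$ with $X_E$ normal, $\rho(X_E/X) = 1$ relatively, $E$ the unique $\mu$-exceptional divisor, and $-E$ relatively nef — in fact relatively ample after contracting. Then I would verify $-E$ is actually $\mu$-ample (not just nef), which follows since $\rho(X_E/X)=1$ and $E$ dominates the exceptional locus. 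Finally, to check that $(X_E, \mu_*^{-1}\Delta + (1-a)E)$ is lc, I would compare log discrepancies: for any divisor $F$ over $X_E$, $A_{X_E, \mu_*^{-1}\Delta + (1-a)E}(F) = A_{X,\Delta}(F) - (1-a)\,\ord_F(\text{something involving } E)$, and the bound $a = A_{X,\Delta}(E) - \lct(X,\Delta;\fa_\bullet(\ord_E))$ combined with the inequality \eqref{e-lctA} and properties of $\fa_\bullet(\ord_E)$ controls exactly how much the boundary coefficient $(1-a)$ can push a log discrepancy below zero — the threshold computation ensures it does not.

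I expect the main obstacle to be two-fold. First, showing that the MMP terminates and contracts \emph{precisely} $E$-complementary exceptionals while preserving $E$: one must set up the coefficients on $Y$ with enough care that the relevant pair is klt/dlt (so \cite{BCHM} applies) yet the numerical class of $K+\text{boundary}$ is negative on exactly the curves one wants contracted; the hypothesis $a<1$ is what makes this delicate balancing possible, and in the plt case with $\lfloor\Delta\rfloor = S$ one must additionally ensure $S$ does not get contracted, which requires tracking that $S$'s birational transform stays a component of the boundary with coefficient $1$. Second, the lc verification at the end: the honest computation is the identity relating $\lct(X,\Delta;\fa_\bullet(\ord_E))$, $A_{X,\Delta}(\ord_E)$, and log discrepancies over $X_E$, where one uses that $\mu$ is (for $p$ sufficiently divisible) the blowup of $\fa_p(\ord_E)$ with $\fa_p(\ord_E)\cdot\cO_{X_E} = \cO_{X_E}(-pE)$, as noted in the discussion of extractions preceding the proposition, so that $\lct(X,\Delta;\fa_\bullet(\ord_E))$ literally measures the lc threshold of the boundary coefficient one is allowed to place on $E$. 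The rest is formal.
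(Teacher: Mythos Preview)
Your MMP setup has a sign error that makes the argument fail precisely when $A_{X,\Delta}(E) > 1$, which is the case of interest. With boundary $\Gamma = \pi_*^{-1}\Delta + (1-a)E + \sum_{i \neq E}(1-\epsilon)E_i$ on the resolution, one computes
\[
K_Y + \Gamma \equiv_X \bigl(A_{X,\Delta}(E) - a\bigr)\,E + \sum_{i \neq E}\bigl(A_{X,\Delta}(E_i) - \epsilon\bigr)E_i = \lct(X,\Delta;\fa_\bullet(\ord_E))\cdot E + (\text{other terms}).
\]
The coefficient of $E$ is $\lct(\fa_\bullet) > 0$, not negative, so running this MMP over $X$ will contract $E$ rather than preserve it. To keep $E$ you would need its boundary coefficient to exceed $1 - A_{X,\Delta}(E)$, which is impossible (negative) once $A_{X,\Delta}(E) > 1$. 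The hypothesis $a<1$ does not rule this out: it only gives $A_{X,\Delta}(E) < 1 + \lct(\fa_\bullet)$.

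The missing step is to use the ideals $\fa_m(\ord_E)$ themselves to enlarge the boundary on $X$ before resolving. For $m$ large and $c$ just below $\lct(X,\Delta;\fa_m)$, the pair $(X,\Delta + \fa_m^c)$ remains klt, while
\[
A_{X,\Delta+\fa_m^c}(E) = A_{X,\Delta}(E) - c\cdot \ord_E(\fa_m) \le A_{X,\Delta}(E) - cm.
\]
Since $m\cdot\lct(\fa_m) \to \lct(\fa_\bullet)$, the right side tends to $a<1$, so for suitable $m,c$ one has $A_{X,\Delta+\fa_m^c}(E) < 1$ and the standard \cite{BCHM10} extraction applies to this auxiliary klt pair. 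This is the argument in \cite[1.5]{Blu17}, which the paper simply cites for the klt case. For the plt case, rather than tracking $S$ through an MMP, the paper observes that $(X,\Delta-\varepsilon S)$ is klt with $a_\varepsilon := A_{X,\Delta-\varepsilon S}(E) - \lct(X,\Delta-\varepsilon S;\fa_\bullet) \to a$ as $\varepsilon\to 0$, reducing at once to the klt statement. Your sketch of the final lc verification is essentially correct once the extraction is in hand.
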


The proposition is a consequence of \cite{BCHM10} 
and properties of the log canonical threshold of a graded sequence of ideals.

\begin{proof}
See the argument in \cite[1.5]{Blu17} for the case when $(X,\Delta)$ is klt. 
 If $(X,\Delta)$ is plt, observe that 
$(X,\Delta_{\varepsilon}:=\Delta-\varepsilon S)$ is klt for $0<\e <1$. 
If we set
\[
a_{\varepsilon}: = A_{X,\Delta_\varepsilon}( \ord_E) -  \lct(X,\Delta_\varepsilon; \fa_\bullet(\ord_E)),
\] 
then $\displaystyle \lim_{\varepsilon \to 0} a_\varepsilon = a$ and
we may reduce to the klt case.
\end{proof}

\subsection{Filtrations}
Let $(X,\Delta)$ be a $n$-dimensional log Fano pair. 
Fix a positive integer $r$ such that $L:=-r(K_X+\Delta)$ is Cartier and write
\[
R =R(X,L) =   \bigoplus_{m \in \N} R_{m}  = \bigoplus_{m \in \N}  H^0(X, \cO_X(mL))\]
for the section ring of $L$.  Set 
$M(L): = \{ m \in \N \, \vert \, H^0(X, \cO_X(mL)) \neq 0\}$.

\begin{defn}
A \emph{filtration} $\cF$ of $R$ we will mean the data of a family  of vector subspaces 
$\cF^\la R_m \subseteq R_m$  
for $m \in \N$ and $\la \in \R$ satisfying
\begin{itemize}
\item[(1)]
$\cF^\la R_m  \subseteq \cF^{\la'} R_m$ when $\la \geq \la'$;
\item[(2)] 
$\cF^\la R_m = \cap_{\la ' < \la} \cF^{\la'} R_m$;
\item[(3)]
$\cF^0 R_m =R_m$ and $\cF^\la R_m=0$ for $\la \gg0$.
\item[(4)]
$\cF^\la R_m \cdot \cF^\la R_{m'} \subseteq \cF^{\la +\la'} R_{m+m'}$.
\end{itemize}
A filtration $\cF$ of $R$ is a called an \emph{$\N$-filtration} if $\cF^\la R_m = \cF^{\lceil \la \rceil} R_m$ for all $m\in \N$ and $\la \in \R$. 
To give a $\N$-filtration $\cF$, it suffices to give a family of subspaces $\cF^p R_m \subseteq R_m$ for $m,p \in \N$ satisfying (1), (3), and (4). 

A filtration $\cF$ is  \emph{linearly bounded}  if there exists $C>0$ so that $\cF^{Cm} R_m=0$ for all $m \in \N$ and \emph{trivial} if $\cF^\la R_m =0$ for all $m \in \N$ and $\lambda >0$. 
\end{defn}

\subsubsection{Rees construction}
Let $\cF$ be an $\N$-filtration of $R$. 
The \emph{Rees algebra}  of $\cF$ is the $\mathbbm{k}[t]$-algebra
\[
{\rm Rees}(\cF): = \bigoplus_{m \in \N}  \bigoplus_{p \in \Z} (\cF^p R_m )  t^{-p} \subseteq R[t,t^{-1}].\]
The \emph{associated graded ring} of $\cF$ is
\[
\gr_{\cF} R : =  \bigoplus_{m \in \N} \bigoplus_{p \in \Z} \gr_{\cF}^p R_m,
\quad\quad \text{ where } \gr_{\cF}^p R_m = \frac{\cF^{p}R_m}{\cF^{p+1}R_m}
.\]
Note that 
\begin{equation}\label{e-Rees}
{ \rm Rees}(\cF) \otimes_{k[t]} k[t,t^{-1}] \simeq R[t,t^{-1}] \quad 
\text{ and } 
\quad\quad
\frac{{\rm Rees}(\cF)}{ t\, {\rm Rees}(\cF)}  \simeq \gr_{\cF}R .\end{equation}
Hence, ${\rm Rees}(\cF)$ is said to give a degeneration of $R$ to the associated graded ring of $\cF$.

An $\N$-filtration $\cF$ is \emph{finitely generated} if ${\rm Rees}(\cF)$ is a finitely generated $\mathbbm{k}[t]$-algebra.
In this case, we set  $\cX: = \Proj_{\mathbb{A}^1} \left({\rm Rees}(\cF)\right)$. 
By \eqref{e-Rees},
\[
\cX_{ \mathbb{A}^1\setminus 0 } \simeq X \times ( \mathbb{A}^1\setminus 0  ) 
\quad \text{ and }
\cX_0 \simeq \Proj ( \gr_{\cF} R ). 
\]
We write $\cD$ for the $\Q$-divisor that is the  closure  of $\Delta \times ( \mathbb{A}^1\setminus 0 )$ under the embedding of 
$X \times( \mathbb{A}^1\setminus 0  )$ in $\cX$.

The scheme $\cX$ can naturally be endowed with the structure of a \emph{test configuration} of $(X,\Delta)$.
The test configuration is called  \emph{special} if $(\cX,\cD) \to \mathbb{A}^1$ is a $\Q$-Gorenstein family of log Fano pairs.
See \cite[\S 3]{LX14} and \cite[\S 2]{BHJ17}  for information on test configurations and the generalized Futaki invariant.

With the above setup, consider a subscheme $Z\subset X$ 
and write $I_{Z} \subset R$ for the corresponding homogenous ideal.
The scheme theoretic closure of $Z\times (\mathbb{A}^1  \setminus 0 )$ in $\cX$, denoted by $\cZ$, is defined by the ideal  
\[
\bigoplus_{m \in \N} \bigoplus_{p \in \Z}  (\cF^p R_m \cap I_{Z})t^{-p} \subseteq {\rm Rees}(\cF).
\]
Indeed, the corresponding subscheme agrees with $Z\times (\mathbb{A}^1\setminus 0 )$  away from $0 $ and is torsion free over $0$.
The above description of $\cZ$ yields that its scheme theoretic fiber along 0 is given by the \emph{initial ideal}
\[
\init(I_Z) : = \bigoplus_{m \in \N} \bigoplus_{p \in \Z}   \im (\cF^p R_m \cap I_Z \to \gr_{\cF}^p R_m ) \subset \gr_{\cF}^p R_m
.\]

\subsubsection{Volume}
Given a filtration $\cF$ of $R$, we  set
\[
\vol( \cF R ^{(x)} ) := \limsup \limits_{m \to \infty}  \frac{ \dim (\cF^{xm} R_m ) }{ m^n /n!} 
\] for $x\in \R_{\geq0}$. 
Assuming $\cF$ is linearly bounded (which implies $\vol( \cF R ^{(x)}) =0$ for $x\gg0$), we set
\[
S(\cF) := \frac{1}{ r^{n+1}(-K_X-\Delta)^n }  \int_0^\infty \vol( \cF R^{(x)}) \, dx.
\footnote{Note that this differs from the definition of $S(\cF)$ in \cite{BJ17,BL18} by a factor of $1/r$. Since we are interested in the polarization $-K_X-\Delta$, not $L$,  such a convention is natural.}
\]
By \cite{BC11} (see also \cite[5.3]{BHJ17}), 
\begin{equation}\label{e-Riemsum}
 S(\cF) =
  \lim_{m \to \infty}  
  \left(
   \frac{1}{r \dim(R_m) } 
  \int_{0 }^\infty  \dim  (\cF^{mx} R_m ) \, dx \right)
   \end{equation}
   In particular, if  $\cF$ is an $\N$-filtration,  then 
$S(\cF) =   \lim_{m \to \infty} 
   \frac{  \sum_{p \geq 0 }  \left( p \dim \gr_{\cF}^p R_m  \right) }{ m r\dim R_m} $.

 \subsubsection{Base ideals}
Given a filtration $\cF$ of $R$,  set 
 $$\fb_{p,m}:= \im \left(  \cF^p R_m \otimes \cO_{X}(-mL) \to \cO_X\right).$$
 for $p,m\geq 0$. 
We set
$\fb_{p}(\cF): = \fb_{p,m}$ for $m\gg 1$.
The ideal $\fb_{p}(\cF)$ is well defined and $\fb_\bullet(\cF)$ is a graded sequence of ideals assuming $\cF$ is non-trivial \cite[3.17-3.18]{BJ17}. 

\subsubsection{Filtrations induced by valuations}
Given $v\in \Val_{X}^*$, we set 
\[
\cF_v^{\la } R_m = \{ s \in R_m \, \vert \, v(s) \geq \la \} 
\]
for each $\la\in \R$ and $m \in \N$.
Equivalently, $\cF_v^{\la } R_m = H^0(X, \cO_X(mL) \otimes \fa_\la(v))$.
Note that $\cF_v$ is a non-trivial filtration of $R$.

If $A_{X,\Delta}(v)< +\infty$ , then  $\cF_v$ is linearly bounded \cite[3.1]{BJ17}. In this case,  we set $S(v) :=S(\cF_v)$.

 \subsubsection{Filtrations induced by divisors}
If $E$ is a divisor over $X$, we write $\cF_E:=\cF_{\ord_E}$ and $S(E): = S(\cF_E)$.
Following \cite{Fuj16}, we say $E$ is \emph{dreamy} if $\cF_E$ is a finitely generated filtration of $R$. 

When $E$ arises as a prime divisor on a proper normal model $\mu:Y \to X$,
\[
\cF_E^{\la } R_m = H^0\big(Y, \cO_Y (  mr \mu^*(- K_X-\Delta)- \lceil \la \rceil E   ) \big)
.\] 
Therefore,
\[
S(E) = \frac{1}{ (-K_X-\Delta)^n } \int_0^\infty \vol( \mu^*(-K_X-\Delta)- x E) \, dx
.\]
\subsection{K-stability}
Based on the original analytic definition in \cite{Tia97}, an algebraic definition of K-(semi,poly)-stability was introduced in \cite{Don02}. Here, we will define these notations for log Fano pairs using valuations. 

\subsubsection{$\beta$-invariant}\label{ss-invariant}
Let $(X,\Delta)$ be an $n$-dimensional log Fano pair and $E$ a divisor over $X$. 
Following \cite{Fuj16}, 
$$\beta_{X,\Delta}(E):=(-K_X-\Delta)^n \left( A_{X,\Delta}(E) - S(E) \right).$$
More generally, if
 $v\in \Val_{X}$ with $A_{X,\Delta}(v) < +\infty$,
we set
$\beta_{X,\Delta}(v) := (-K_X-\Delta)^n \big( A_{X,\Delta}(v) -S(v) \big)$.

\begin{defn}\label{d-kstable}
A log Fano pair $(X,\Delta)$  is 
\begin{enumerate}
\item  \emph{K-semistable} if $\beta_{X,\Delta}(E)\ge 0$ for all divisors  $E$ over $X$;
\item \emph{K-stable} if $\beta_{X,\Delta}(E)> 0$ for all dreamy divisors  $E$ over $X$;
\item \emph{uniformly K-stable} if there exists an $\varepsilon>0$ such that 
$$\beta_{X,\Delta}(E)\ge \varepsilon A_{X,\Delta}(E)(-K_X-\Delta)^n$$ for all divisors $E$ over $X$.
 \end{enumerate}
 \end{defn}
The equivalence of the above definition with the original definitions was addressed in \cite{Fuj16,Fuj17,Li17} and the arguments rely on the special degeneration theory of \cite{LX14}. In Corollary \ref{c-alld}, we will show that the wordy \emph{dreamy} may be removed from Definition \ref{d-kstable}.2.

\begin{defn}\label{d-kpoly}
A log Fano pair $(X,\Delta)$ is \emph{K-polystable} if it is K-semistable and any special test configuration $(\cX, \cD)\to \mathbb{A}^1$ of $(X,\Delta)$ with  $(\cX_0,\cD_0)$ K-semistable satisfies $(\cX, \cD) \simeq (X,\Delta) \times \mathbb{A}^1$. 
\end{defn}

The equivalence of the above definition with the definition in \cite[6.2]{LX14}  relies on the following result: If $(X,\Delta)$ is a K-semistable log Fano pair and $(\cX,\cD)$ is a special test configuration of $(X,\Delta)$, then 
  $\Fut(\cX,\cD)=0$ if and only if $(\cX_0,\cD_0)$ is K-semistable \cite[3.1]{LWX18}.
   
 \begin{defn}\label{d-sequiv}
Two K-semistable log Fano pairs $(X,\Delta)$ and $(X', \Delta')$ are \emph{S}-equivalent 
if they degenerate to a common K-semistable log Fano pair via special test configurations. 
\end{defn}

By \cite[3.2]{LWX18}, S-equivalent log Fano pairs degenerate to a common K-polystable pair via special test configurations. 
Furthermore, the K-polystable pair is uniquely determined up to isomorphism.

\subsubsection{$\delta$-invariant}
We recall an interpretation of the above discussion using an invariant introduced in \cite{FO16}. 

Let $(X,\Delta)$ be a log Fano pair. Fix a positive integer $r$ so that $L:=-r(K_X+\Delta)$ is a Cartier divisor and $H^0(X,\cO_X(L)) \neq 0$. 
Given $m \in r \N$, we say $  D \sim_{\Q} -K_X-\Delta$ is  $m$-basis type  if there exists a basis 
$\{s_1,\ldots, s_{N_m}  \}$ of $H^0(X, \cO_{X}(-m(K_X+\Delta))$ such that \[
D = \frac{1}{m N_m} \big(  \left\{ s_1=0 \right\} + \cdots + \left\{ s_{N_m}=0 \right\} \big) .\]
We set 
$
\delta_{m}(X,\Delta): = \min \{ \lct(X,\Delta;D) \, \vert \, D \sim_{\Q} -K_X-\Delta \text{ is  $m$-basis type}\}$.
The \emph{$\delta$-invariant} (also known as the \emph{stability threshold}) of $(X,\Delta)$ is
\[\delta(X,\Delta) =  \limsup\limits_{m \to \infty } \delta_{mr} (X,\Delta), \]
and is independent of the choice of $r$ \cite[4.5]{BJ17}. 
The invariant may also be calculated in terms of valuations or filtrations.

\begin{thm}[{\cite[Theorems A,C]{BJ17}}]\label{t-delta}
We have
\begin{equation*}
\delta(X,\Delta)=\inf_E \frac{A_{X,\Delta}(E)}{ S(E)} = \inf_{v} \frac{A_{X,\Delta}(v)}{S(v)},
\end{equation*}
where the first infimum runs through all divisors $E$ over $X$ and the second through all $v\in \Val_X^*$ with $A_{X,\Delta}(v)< +\infty$. 
Furthermore, the limit $\displaystyle \lim_{m \to \infty} \delta_{mr} (X,\Delta)$ exists.
\end{thm}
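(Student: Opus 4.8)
The plan is to establish the two equalities
\[
\delta(X,\Delta) = \inf_E \frac{A_{X,\Delta}(E)}{S(E)} = \inf_v \frac{A_{X,\Delta}(v)}{S(v)}
\]
together with the existence of the limit $\lim_{m\to\infty}\delta_{mr}(X,\Delta)$, by relating the $m$-basis-type invariants $\delta_m$ to the $S$-functional through the \emph{Okounkov body / Newton--Okounkov} machinery. First I would fix a valuation-like flag to build an Okounkov body $\Delta(L)\subset\R^n$ for $L=-r(K_X+\Delta)$, so that for any linearly bounded filtration $\cF$ of $R=R(X,L)$ the limiting measures $\dim \cF^{xm}R_m / \dim R_m\,dx$ converge to a measure on $\R$ described by integrating a concave ``$G$-function'' $G_\cF$ over $\Delta(L)$; this is the content of \cite{BC11} cited just before the statement, and it gives $S(\cF)=\frac{1}{r}\int_{\Delta(L)}G_\cF\,d\mathrm{Leb}$ up to the normalizing volume. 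The key elementary input is that an $m$-basis-type divisor $D$ is, essentially by construction, the average of the $N_m$ vanishing divisors of any basis, and that the basis minimizing $\lct$ can be taken compatible with the filtration $\cF_v$ (or $\cF_E$): a basis $\{s_i\}$ \emph{adapted to $\cF_v$} satisfies $\sum_i v(s_i) = \sum_{p} p\cdot \dim\gr^p_{\cF_v}R_m$, so $\mathrm{ord}_E D \geq \frac{1}{mN_m}\sum_i v(s_i)$ for the basis realizing $\delta_m$, whence $\lct(X,\Delta;D)\le A_{X,\Delta}(v)/v(D)$.

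With that in hand, the proof splits into two inequalities. For ``$\ge$'': given any $v\in\Val_X^*$ with $A_{X,\Delta}(v)<\infty$, choose for each $m$ an $m$-basis-type $D_m$ adapted to $\cF_v$ and minimizing $\mathrm{ord}$ against $v$; then $\delta_m(X,\Delta)\le\lct(X,\Delta;D_m)\le A_{X,\Delta}(v)/v(D_m)$, and $v(D_m)\to S(v)$ by the Riemann-sum formula \eqref{e-Riemsum}. Taking $\limsup$ in $m$ and then $\inf$ over $v$ gives $\delta(X,\Delta)\le \inf_v A_{X,\Delta}(v)/S(v)\le \inf_E A_{X,\Delta}(E)/S(E)$. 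For the reverse inequality ``$\le$'', one needs that \emph{some} valuation (or divisor) nearly computes $\delta_m$: for each $m$ pick the basis-type divisor $D_m$ achieving $\delta_m$ and a valuation $v_m$ computing $\lct(X,\Delta;D_m)$, i.e.\ $A_{X,\Delta}(v_m)=\delta_m(X,\Delta)\,v_m(D_m)$; then bound $v_m(D_m)\le S(v_m)$ (the functional $v\mapsto v(D_m)$ is dominated by $S$ because $D_m$ is an average of members of $|{-m(K_X+\Delta)}|$ and any section vanishes to order $\le$ the jumping value), giving $\delta_m(X,\Delta)\ge A_{X,\Delta}(v_m)/v_m(D_m)\ge \inf_v A_{X,\Delta}(v)/S(v)$. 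Passing to the $\limsup$ finishes both equalities.

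The existence of the genuine limit $\lim_m\delta_{mr}$ is the part I expect to be the main obstacle, since the definition only gives a $\limsup$. The plan is to prove it by a uniform-convergence argument: show that $\delta_m(X,\Delta)$ and the ``geometric'' limit $\inf_v A_{X,\Delta}(v)/S(v)$ differ by an error of size $O(1/m)$ (or $o(1)$) uniformly, using that the normalized measures $\frac{1}{\dim R_m}\sum_p \dim\gr^p_{\cF_v}R_m\,\delta_{p/m}$ converge \emph{uniformly in $v$} to the Okounkov-body measure — this is the technical heart, requiring control of $\dim R_m$ versus $r^{n+1}m^n/n!$ and of the filtered pieces via Fujita-type approximation of the graded sequence $\fb_\bullet(\cF_v)$. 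Once $\delta_m\to \inf_v A/S$ as an honest limit, both the limit statement and the two displayed equalities follow simultaneously. I would organize the write-up as: (i) Okounkov-body setup and the concave transform of a filtration; (ii) the ``$\le$'' direction via adapted bases; (iii) the ``$\ge$'' direction via valuations computing $\lct$ of basis-type divisors; (iv) uniform convergence and existence of the limit.
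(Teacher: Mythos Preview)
The paper does not prove this theorem at all: it is stated with attribution to \cite[Theorems~A,~C]{BJ17} and used as a black box. So there is no ``paper's own proof'' to compare against; your plan is in fact a sketch of the argument in \cite{BJ17}, and at that level it is broadly correct.

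One genuine slip to flag: in your second inequality you assert $v_m(D_m)\le S(v_m)$, but what is actually true is $v_m(D_m)\le S_m(v_m)$, the $m$-th level average $\frac{1}{m\dim R_m}\sum_p p\dim\gr^p_{\cF_{v_m}}R_m$. The maximum of $v(D)$ over $m$-basis-type divisors $D$ is attained by a basis adapted to $\cF_v$ and equals $S_m(v)$, not $S(v)$. Passing from $S_m$ to $S$ is exactly where the uniform convergence you discuss in your final paragraph is needed: one proves $S_m(v)\le (1+\varepsilon_m)S(v)$ with $\varepsilon_m\to 0$ \emph{independent of $v$}. Thus the two pieces you present as separate---the reverse inequality and the existence of the limit---are really the same step: once the uniform estimate is in hand, $\delta_m$ is squeezed between $(1+\varepsilon_m)^{-1}\inf_v A(v)/S(v)$ and $\inf_v A(v)/S(v)$, which gives both the limit and the equality simultaneously. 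Your write-up should make this dependence explicit rather than treating (iii) as done before (iv). (Also, your labels ``$\ge$'' and ``$\le$'' for the two directions are swapped, though the logic under each is fine.)
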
 

\begin{prop}[{\cite[Proposition 4.10]{BL18}}]\label{p-deltafilt}
We have 
\[
\d(X,\Delta) = \inf_{\cF} \frac{ \lct(X,\Delta; \fb_\bullet(\cF)) }{S(\cF)}
\]
where the infimum runs through all non-trivial linearly bounded filtrations of $R(X,L)$. 
\end{prop}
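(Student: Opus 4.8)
The plan is to prove the two inequalities separately, using Theorem \ref{t-delta} as the bridge between filtrations and valuations. For the inequality
\[
\d(X,\Delta) \leq \inf_{\cF} \frac{ \lct(X,\Delta; \fb_\bullet(\cF)) }{S(\cF)},
\]
I would argue that each linearly bounded filtration is, in an asymptotic sense, dominated by a valuation. Fix a non-trivial linearly bounded filtration $\cF$ of $R$. The natural candidate is to pick a valuation $v$ with $A_{X,\Delta}(v)< +\infty$ that ``computes'' the log canonical threshold of $\fb_\bullet(\cF)$, i.e. with $\lct(X,\Delta;\fb_\bullet(\cF)) = A_{X,\Delta}(v)/v(\fb_\bullet(\cF))$; by general lc-threshold theory such a (quasimonomial) valuation exists, and by homogeneity of $A_{X,\Delta}$ we may rescale so that $v(\fb_\bullet(\cF))=1$. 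Then one checks $\cF^{\la} R_m \subseteq \cF_v^{\la} R_m$ for all $m,\la$ (an element of $\cF^p R_m$ lies, after trivializing $L$ near $c_X(v)$, in $\fb_{p,m}$ up to the fixed local generator, whence has $v$-value $\geq p\cdot v(\fb_\bullet(\cF)) - o(m) = p - o(m)$; one must be slightly careful with the $o(m)$ error and pass to the limit). This containment gives $\vol(\cF R^{(x)}) \leq \vol(\cF_v R^{(x)})$ for all $x$, hence $S(\cF) \leq S(v)$, and therefore
\[
\frac{\lct(X,\Delta;\fb_\bullet(\cF))}{S(\cF)} \geq \frac{A_{X,\Delta}(v)}{S(v)} \geq \d(X,\Delta)
\]
by Theorem \ref{t-delta}. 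Taking the infimum over $\cF$ yields the claim.

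For the reverse inequality $\d(X,\Delta) \geq \inf_\cF \lct(X,\Delta;\fb_\bullet(\cF))/S(\cF)$, I would go through valuations: by Theorem \ref{t-delta} it suffices, for every $v\in\Val_X^*$ with $A_{X,\Delta}(v)<+\infty$, to produce a filtration $\cF$ with $\lct(X,\Delta;\fb_\bullet(\cF))/S(\cF) \leq A_{X,\Delta}(v)/S(v)$. The obvious choice is $\cF = \cF_v$, which is non-trivial and (since $A_{X,\Delta}(v)<+\infty$) linearly bounded, with $S(\cF_v) = S(v)$ by definition. So the whole content is the comparison $\lct(X,\Delta;\fb_\bullet(\cF_v)) \leq A_{X,\Delta}(v)$. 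Now $\fb_p(\cF_v)$ is, for $m\gg1$, the image of $\cF_v^p R_m \otimes \cO_X(-mL) \to \cO_X$, which sits inside the valuation ideal $\fa_p(v)$; hence $\fb_\bullet(\cF_v) \subseteq \fa_\bullet(v)$ termwise, giving $v(\fb_\bullet(\cF_v)) \geq v(\fa_\bullet(v)) = 1$ and thus $\lct(X,\Delta;\fb_\bullet(\cF_v)) \leq A_{X,\Delta}(v)/v(\fb_\bullet(\cF_v)) \leq A_{X,\Delta}(v)$ by \eqref{e-lctA} and monotonicity of the lct. Combining,
\[
\frac{\lct(X,\Delta;\fb_\bullet(\cF_v))}{S(\cF_v)} \leq \frac{A_{X,\Delta}(v)}{S(v)},
\]
and taking infima over $v$ and invoking Theorem \ref{t-delta} again finishes this direction.

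The main obstacle is the first inequality, specifically making rigorous the passage ``$\cF$ is asymptotically dominated by the valuation computing $\lct(\fb_\bullet(\cF))$.'' Two points need care: first, that a finite-log-discrepancy valuation computing $\lct(X,\Delta;\fb_\bullet(\cF))$ actually exists (this uses that $\fb_\bullet(\cF)$ is a genuine graded sequence of ideals — which holds by \cite[3.17-3.18]{BJ17} since $\cF$ is non-trivial — together with standard results on lc thresholds of graded sequences); and second, controlling the $o(m)$ discrepancy between membership in $\cF^p R_m$ and in the stabilized base ideal $\fb_p(\cF)$, so that the containment of volumes $\vol(\cF R^{(x)}) \leq \vol(\cF_v R^{(x)})$ survives the limit defining $S$. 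Both are handled by the asymptotic machinery already set up for the base ideals $\fb_\bullet(\cF)$ in \cite{BJ17}, so the proof should be short modulo citing those facts.
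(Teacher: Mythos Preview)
The paper does not prove this proposition; it is quoted from \cite[Proposition~4.10]{BL18} without argument, so there is no in-paper proof to compare against. Your two-inequality strategy is the standard one and matches how the result is obtained in \cite{BL18} (building on the machinery of \cite{BJ17}): one direction by specializing to $\cF=\cF_v$, the other by picking a valuation that (nearly) computes $\lct(X,\Delta;\fb_\bullet(\cF))$ and comparing $S(\cF)$ with $S(v)$.

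Two minor sharpenings. First, you do not need a valuation that \emph{exactly} computes $\lct(X,\Delta;\fb_\bullet(\cF))$; any divisor $E$ with $A_{X,\Delta}(E)/\ord_E(\fb_\bullet(\cF))$ within $\varepsilon$ of the lct suffices, and such $E$ exists by the definition of lct as an infimum over divisors. This sidesteps the (nontrivial) question of whether the lct of a graded sequence is attained. Second, the containment $\cF^p R_m \subseteq \cF_v^p R_m$ actually holds without an $o(m)$ error once $m$ is large enough that $m'L$ is globally generated for some $m'\in M(L)$: the multiplicativity axiom gives $\cF^p R_m \cdot \cF^0 R_{m'} \subseteq \cF^p R_{m+m'}$, whence $\fb_{p,m} \subseteq \fb_{p,m+m'}$ and so $\fb_{p,m} \subseteq \fb_p(\cF)$; therefore $v(\fb_{p,m}) \geq v(\fb_p(\cF)) \geq p\cdot v(\fb_\bullet(\cF))$ on the nose. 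With these two observations the sketch becomes a complete proof.
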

Combining Definition \ref{d-kstable} and Theorem \ref{t-delta}, we immediately see 
\begin{thm}[{\cite{FO16,BJ17}}]
A log Fano pair $(X,\Delta)$ is uniformly K-stable (resp., K-semistable) if and only if $\d(X,\Delta) >1$ (resp., $\geq 1$).
\end{thm}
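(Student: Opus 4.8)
The plan is to translate each stability notion from Definition~\ref{d-kstable} into an inequality for the ratio $A_{X,\Delta}(E)/S(E)$ and then apply the valuative formula $\delta(X,\Delta)=\inf_E A_{X,\Delta}(E)/S(E)$ from Theorem~\ref{t-delta}. I would begin by recording the elementary fact that for every divisor $E$ over $X$ one has $0<A_{X,\Delta}(E)<+\infty$ and $0<S(E)<+\infty$: the log discrepancy is finite since $E$ arises as a prime divisor on some birational model, and $S(E)>0$ because $-K_X-\Delta$ is ample, so $\mu^*(-K_X-\Delta)-xE$ remains big for all sufficiently small $x>0$ and hence the integral defining $S(E)$ is positive. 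Consequently $\beta_{X,\Delta}(E)=(-K_X-\Delta)^n\big(A_{X,\Delta}(E)-S(E)\big)$ is a well-defined real number whose sign agrees with that of $A_{X,\Delta}(E)/S(E)-1$.

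For the K-semistable case, $\beta_{X,\Delta}(E)\ge 0$ for all divisors $E$ over $X$ is equivalent to $A_{X,\Delta}(E)/S(E)\ge 1$ for all such $E$, hence to $\inf_E A_{X,\Delta}(E)/S(E)\ge 1$; by Theorem~\ref{t-delta} this infimum equals $\delta(X,\Delta)$, which gives the claim. For the uniformly K-stable case, note first that the constant $\varepsilon$ in Definition~\ref{d-kstable}.3 may always be taken in $(0,1)$ (it cannot be $\ge 1$ since $S(E)>0$). Then the defining inequality $\beta_{X,\Delta}(E)\ge \varepsilon A_{X,\Delta}(E)(-K_X-\Delta)^n$ rearranges to $A_{X,\Delta}(E)/S(E)\ge 1/(1-\varepsilon)$, so uniform K-stability is equivalent to the existence of $\varepsilon\in(0,1)$ with $\delta(X,\Delta)=\inf_E A_{X,\Delta}(E)/S(E)\ge 1/(1-\varepsilon)$, i.e. to $\delta(X,\Delta)>1$; conversely, if $\delta(X,\Delta)>1$ one takes $\varepsilon:=1-1/\delta(X,\Delta)$ and runs the computation backwards.

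Since the substantive input --- the identity $\delta(X,\Delta)=\inf_E A_{X,\Delta}(E)/S(E)=\inf_v A_{X,\Delta}(v)/S(v)$ and the existence of $\lim_{m\to\infty}\delta_{mr}(X,\Delta)$ --- is already provided by Theorem~\ref{t-delta}, there is no genuine obstacle; the only point meriting a line of care is converting the existential quantifier on $\varepsilon$ in the definition of uniform K-stability into the strict inequality $\delta(X,\Delta)>1$, which is the elementary manipulation just described.
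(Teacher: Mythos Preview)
Your proposal is correct and follows exactly the approach indicated in the paper, which simply states that the result follows immediately by combining Definition~\ref{d-kstable} with the formula $\delta(X,\Delta)=\inf_E A_{X,\Delta}(E)/S(E)$ of Theorem~\ref{t-delta}. You have spelled out this combination carefully, including the elementary manipulation translating the existential $\varepsilon$ in the definition of uniform K-stability into the strict inequality $\delta(X,\Delta)>1$; nothing further is needed.
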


While in Section \ref{s-uniK} we will use the definition of the $\d$-invariant in terms of $m$-basis type divisors, in Section \ref{s-kes} we will rely on its characterization in terms of valuations and filtrations.

\subsection{Normalized volume}\label{ss-normv}
Now, we discuss an invariant similar to the $\delta$-invariant, but defined in a local setting. 
The invariant was first introduced in \cite{Li18} and is closely related to  the K-semistability of log Fano pairs. 

Let $(X,\Delta)$ be an $n$-dimensional klt pair and $x\in X$ a closed point. The {\it non-archimedean link} of $X$ at $x$ is defined  as
$$\Val_{X,x} := \{ v\in \Val_{X} \, \vert\, c_X(v) = x  \, \} \subset \Val_X.$$

\begin{defn}[\cite{Li18}]\label{d-normvol}
The \emph{normalized volume function} $\hvol_{(X,\Delta),x}:\Val_{X,x}\to(0, +\infty]$
 is defined by
  \[
  \hvol_{(X,\Delta),x}(v)=\begin{cases}
            A_{(X,\Delta)}(v)^n\cdot\vol(v) & \textrm{ if }A_{(X,\Delta)}(v)<+\infty;\\
            +\infty & \textrm{ if }A_{(X,\Delta)}(v)=+\infty.
           \end{cases}
 \]
 The \emph{volume of the singularity} $(x\in (X,\Delta))$ is defined as
 \[
  \hvol(x, X,\Delta):=\inf_{v\in\Val_{X,x}}\hvol_{(X,\Delta),x}(v).
 \]
 The previous infimum is a minimum by the main result in \cite{Blu18}.
 \end{defn}

See \cite{LLX18} for a survey on the recent study of the normalized volume function, especially the guiding question,  the Stable Degeneration Conjecture (see \cite[7.1]{Li18} and \cite[4.1]{LLX18}). 

\subsubsection{Relation to K-stability}\label{ss-nvolkss}
The connection between the normalized volume function and K-semistability is via the cone construction first studied in \cite{Li17}.

 Let  $(X,\Delta)$ be a log Fano pair and $r$ a positive integer so that $L:=-r(K_X+\Delta)$ is a Cartier divisor.  Let  $(Z,\Gamma)$ denote the cone over $X$ with respect to the polarization $L$
 and $x\in Z$ denote the vertex. Specifically, $Z= \Spec(R)$, where $R=R(X,L)$ and $\Gamma$ is the  
 closure of the pullback of $\Delta$ via the projection map $Z \setminus \{x\} \to X$.

There is a natural map $X_{L} \to Z$, where $X_{L}: =\Spec_X   \big(\bigoplus_{p \geq 0} \cO_X(pL) \big)$ is the total space of the line bundle on $X$ whose sheaf of sections is $\cO_{X}(L)$. 
The map is an isomorphism over $ Z \setminus x $ and  the preimage of the vertex is  the zero section $X_{ \rm zs} \subset X_{L}$. We call $v_0 : = \ord_{X_{ {\rm zs}}}$ the \emph{canonical valuation}
over the cone.  

\begin{thm}[{\cite{Li17,LL16,LX16}}]\label{t-cone}
The canonical valuation $v_0$ is a minimizer of $\hvol_{(Z,\Gamma),x}$ if and only if $(X,\Delta)$ is K-semistable. 
\end{thm}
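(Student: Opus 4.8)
The plan is to relate the normalized volume of valuations on the cone $(Z,\Gamma)$ centered at the vertex $x$ to the $\beta$-invariant of divisorial (and more generally quasimonomial) valuations on $(X,\Delta)$, using the Rees-type degeneration that identifies $Z\setminus x$ with the total space of the line bundle $L^{-1}$ minus its zero section. The key computation, due to \cite{Li17,LL16,LX16}, is that for a valuation $w$ on $Z$ obtained by combining the canonical valuation $v_0$ with a valuation $v$ coming from $X$ (i.e.\ moving along the "ray" from $v_0$ in the direction of $v$ inside $\Val_{Z,x}$), one has an explicit formula for $\hvol_{(Z,\Gamma),x}(w)$ in terms of $A_{X,\Delta}(v)$, $S(v)$, and the parameters of the combination. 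Differentiating this formula at the point corresponding to $v_0$ itself, the derivative in the direction of $v$ is (up to a positive constant depending only on $r$, $n$, and $(-K_X-\Delta)^n$) equal to $\beta_{X,\Delta}(v)$. Thus $v_0$ is a local minimizer of $\hvol_{(Z,\Gamma),x}$ along all such rays if and only if $\beta_{X,\Delta}(v)\ge 0$ for all such $v$, which by Definition \ref{d-kstable} and Theorem \ref{t-delta} is exactly K-semistability of $(X,\Delta)$.

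Concretely, I would proceed in the following steps. First, recall the structure of $\Val_{Z,x}$: every valuation on $Z$ centered at $x$ whose restriction is controlled can be built from a valuation $v$ on $X$ together with a choice of value on the fiber coordinate $t$ of $X_L\to X$; this gives a two-parameter family (or a ray) through $v_0$. Second, compute $A_{(Z,\Gamma)}$ and $\vol$ along this family. The log discrepancy computation uses that $K_Z+\Gamma$ is the pullback of $K_X+\Delta$ twisted by the zero section with the appropriate coefficient (since $-K_X-\Delta = \frac1r L$), giving $A_{(Z,\Gamma)}(v_0)=1$ and an affine-linear expression for $A$ along the ray. The volume computation uses that the multiplicity of a valuation ideal on the cone is an integral of the volumes $\vol(\cF_v R^{(x)})$, i.e.\ it is governed exactly by $S(v)$. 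Third, assemble these into the formula for $\hvol$ along the ray and take the one-sided derivative at $v_0$; the sign of this derivative is the sign of $A_{X,\Delta}(v)(-K_X-\Delta)^n - \int_0^\infty \vol(\cdots)\,dt = \beta_{X,\Delta}(v)$. Fourth, for the "only if" direction note that non-K-semistability produces (via Theorem \ref{t-delta}) a valuation, hence a ray, along which $\hvol$ decreases, so $v_0$ is not a minimizer; for the "if" direction, one must know that it suffices to test minimality of $v_0$ along these cone-type rays — this is where the results of \cite{Li17,LX16} on the structure of minimizers and lower-semicontinuity/properness of $\hvol$ on $\Val_{Z,x}$ enter, together with the fact that $v_0$ is quasimonomial so that the directional derivative criterion is genuinely equivalent to being a global minimum here (convexity-type properties of $\hvol$, or an approximation argument reducing arbitrary valuations to cone-type ones).

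The main obstacle is the last point: showing that checking $v_0$ against the cone-type rays is enough to conclude it is a \emph{global} minimizer of $\hvol_{(Z,\Gamma),x}$, rather than merely a critical point. A general valuation centered at $x$ need not be of the simple "combination with $v_0$" form, so one needs either the degeneration-to-initial-valuation technique (degenerate an arbitrary valuation on $Z$ to one adapted to the cone structure, not increasing $\hvol$) or the convexity of $A^n\cdot\vol$ along suitable paths. Both are genuinely in \cite{Li17,LL16,LX16}, so in the write-up I would isolate this as the technical heart and cite it precisely, while presenting the derivative-equals-$\beta$ computation in full since it is the conceptual bridge to Definition \ref{d-kstable}.
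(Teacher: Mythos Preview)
The paper does not give a proof of Theorem~\ref{t-cone}; it is stated as a result from the cited references \cite{Li17,LL16,LX16}, and only the derivative formula \eqref{e-betader} (which is one ingredient) is discussed explicitly. So there is no ``paper's own proof'' to compare your proposal against.

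That said, your sketch is a reasonable outline of the strategy in those references, and you correctly isolate the asymmetry between the two directions: the ``only if'' direction follows cleanly from the derivative computation $\frac{d}{dt}\hvol(v_t)\big|_{t=0^+}=(n+1)\beta_{X,\Delta}(E)$, while the ``if'' direction is the substantive part. One small correction: in the paper's normalization $A_{Z,\Gamma}(v_0)=r^{-1}$, not $1$ (see the displayed formula just above \eqref{e-betader}). More importantly, your description of how the ``if'' direction is handled is somewhat vague. In \cite{LX16} the argument does not proceed by showing that cone-type rays exhaust the testing directions via a convexity or approximation argument in $\Val_{Z,x}$; rather, it goes through Liu's reformulation $\hvol(x,Z,\Gamma)=\inf_{\fa}\lct(\fa)^{n+1}\mult(\fa)$ over $\mathfrak{m}_x$-primary ideals (cf.\ \cite{Liu18}), and then relates this infimum to the generalized Futaki invariants of test configurations of $(X,\Delta)$ via the filtration/Rees picture. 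K-semistability then bounds each such term from below by $\hvol(v_0)$. If you want to write this proof out, that ideal-theoretic reduction is the step to make precise, rather than an abstract convexity statement on $\Val_{Z,x}$.
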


At first sight, using the normalized volume function to study the K-stability of log Fano pairs may seem indirect. 
However, this approach yields a number of new results (for example, see \cite{LX16, LWX18}). 
In this paper, the following key ingredient in the proof of Theorem \ref{t-cone} plays an important role in the proof of our main result.

Following  \cite{Li17,LX16}, let $E$ be a divisor over $X$ that arises on a proper normal model $\mu:Y\to X$. 
Consider the natural birational maps
\[
 Y_{L} \to X_{L} \to Z,\]
 where $Y_{L} := \Spec_Y \big( \bigoplus_{m \geq 0} \cO_Y(m \mu^*L) \big)$. 
 Let $Y_{ {\rm zs}} \subset  Y_{L}$ denote the zero section and $E_{\infty}$ the preimage of $E$ under the projection $Y_{L} \to Y$. 
 Setting 
 $v_t$ equal to the quasimonomial valuation with weights $(1,t)$ along $Y_{\rm zs}$ and $E_{\infty}$
 gives a ray of valuations
$$\{v_{t}\ |\  t\in [0,\infty) \} \subset \Val_{Z,x},$$
where  $v_0 = \ord_{X_{\rm zs} } $ and $v_\infty  = \ord_{ E_{\infty} }$. 
 When $k\in \mathbb{N}$, there exists a divisor $E_k$ over $Z$ satisfying $v_{\frac{1}{k}}=\frac{1}{k}\ord_{E_k}$.

By the formula for the log discrepancy of a quasimonomial valuation \cite[5.1]{JM12}, 
  \[
  A_{Z,\Gamma} (v_t) =   A_{Z,\Gamma} (\ord_{X_ {\rm zs} }) + t A_{Z,\Gamma}( \ord_{ E_{\infty} }) = r^{-1}+ tA_{X,\Delta}( \ord_E) 
 .\]
The valuation ideals are given by, for $t>0$, 
  \[ \fa_p(v_t) = \underset{m\geq 0 }{\bigoplus} {\cF_E}^{(p-m)/t}R_m \subseteq R
  \quad \text{ and } \quad
  \fa_p(v_0) = \underset{m\geq p } {\bigoplus} R_m  \subseteq R
  .\]
  
To see the previous formula holds,
 fix a uniformizer $\omega \in \cO_{Y,E}$ and a local section $s$ of  $\cO_{Y}(\mu^*L)$ that trivializes the sheaf at the generic point of $E$.
  The choice of $s$ induces a  rational map 
 $ Y_{L}  \dashrightarrow Y \times \mathbb{A}^1$
that is an isomorphism  at the generic point of $Y_{\rm zs} \cap E_{\infty}$. 
The birational transforms of  $Y_{\rm zs}$ and $E_{\infty}$ on $Y\times \mathbb{A}^1$ are
 $Y\times  0  $ and 
$E \times  \mathbb{A}^1$.

Fix  $f = \sum_{m \geq 0 } f_m \in R$. For $m$ such that $f_m \neq 0$, set $g_m : = f_m / \omega^{ \ord_E(f_m) }$, 
which, at the generic point of $E$, is a non-vanishing section of  $\cO_{Y}(\mu^*L)$. 
The image of $f$ in $\cO_{Y\times \mathbb{A}^1, E \times 0}$
equals  $\sum_m  \left( \frac{g_m}{s^m} \right)  \omega^{ \ord_E(f_m)}\pi^m$, where $\pi$ is the parameter for $\mathbb{A}^1$. 
Since $\pi$ and $\omega$ are local equations for  $Y\times 0$ and $E\times \mathbb{A}^1$ at the generic point of $E\times 0$ and $ \frac{g_m}{s^m}$ does not vanish at $E\times 0$,
$$v_t (f) : = \min_m \{ t \cdot \ord_E(f_m) + m \, \vert \, f_m \neq 0  \}  $$
and the formula for $\fa_{p}(v_t)$ follows.

By the calculation in \cite[(31, 32)]{Li17} (see also the proof of \cite[4.5]{LX16} or Lemma~\ref{l-dervolume})
\begin{eqnarray}\label{e-betader}
\frac{d}{ dt}\hvol(v_t)\big\vert_{t=0^+} = (n+1) \beta_{X,\Delta}(E)
\end{eqnarray}
This  equation  is a key input in our proof of  Theorem \ref{t-delta=1}.  More specifically, we will follow ideas from \cite{LWX18} and analyze directions along which the normalized volume function has derivative equal to zero.

\subsubsection{C. Li's derivative formula}
In the proof of Theorem \ref{t-main}, we will need a more general version of \eqref{e-betader}. The more general formula follows from the original argument in \cite{Li17}. 

Let $(X,\Delta)$ be an $n$-dimensional log Fano pair and $r\in \Z_{>0}$ so that $L:=-r(K_X+\Delta)$ is Cartier.
Set $R = R(X,L)$ and fix a linearly bounded filtration $\cF$ of $R$.

Associated to $\cF$, we define a collection of graded sequences of ideals of $R$.
For $t \in \R_{>0}$ and $j \in \Z_{>0}$, set
\[
\fb_{t,j}  := \underset{m \geq 0 }{\bigoplus} \cF^{{(j-m)}/{t} }R_m \subset  R \quad \text{ and } \quad \fb_{0,j} := \underset{ m \geq j}{\bigoplus} R_m \subset R
.\]
Note that $\fb_{t,\bullet}$ is a graded sequence of ideals of $R$ for each  for each $t\in \R_{\geq 0}$. Additionally, $\fb_{t,j}$ contains $ \oplus_{m\geq j} R_m$.

\begin{lem}\label{l-dervolume}
With the above notation, fix $A>0$ and set 
\[
f(t) =(r^{-1} +At)^{n+1} \mult(\fb_{t,\bullet})\]
for $t\in \R_{\geq 0}$. 
The following hold:
\begin{enumerate}
\item $\mult(\fb_{t,\bullet}) =  r^n (-K_X-\Delta)^n - (n+1) \int_{0}^{\infty} \vol (\cF R^{(x)}) \frac{ t\, dx}{(1+tx)^{n+2}}$;
\item $\frac{df}{dt} \vert_{t=0^+}  = (n+1) (-K_X-\Delta)^n \left( A- S(\cF) \right)$.
\end{enumerate}
\end{lem}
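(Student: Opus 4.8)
The plan is to establish both parts by analyzing the graded sequences $\fb_{t,\bullet}$ directly, computing volumes via a change of variables, and then differentiating.

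\textbf{Step 1: Relating $\mult(\fb_{t,\bullet})$ to the volume of $\cF$.} First I would fix $t>0$ and analyze the graded sequence $\fb_{t,\bullet}$ on $R$. The ideal $\fb_{t,j} = \bigoplus_{m\geq 0} \cF^{(j-m)/t} R_m$ is $\mathfrak{m}_x$-primary in the cone $Z = \Spec(R)$ with vertex $x$, since it contains $\bigoplus_{m\geq j} R_m$. To compute $\mult(\fb_{t,\bullet}) = \lim_j \dim_{\mathbbm k}(R/\fb_{t,j})/(j^{n+1}/(n+1)!)$, I would note that $R/\fb_{t,j}$ has a natural $\N$-grading in $m$, with the degree-$m$ piece being $R_m / \cF^{(j-m)/t} R_m$, which has dimension $\dim R_m - \dim \cF^{(j-m)/t} R_m$. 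Summing over $m$ from $0$ to $j$ and passing to the limit, the Riemann-sum estimates (as in the proof of \eqref{e-Riemsum}, via \cite{BC11,BHJ17}) give an integral
\[
\mult(\fb_{t,\bullet}) = \int_0^{1} \Big( \frac{(rs)^n (-K_X-\Delta)^n}{n!} \cdot r - \frac{1}{n!}\vol\big(\cF R^{((1-s)/(ts))\cdot \text{(rescaled)}}\big) \cdot (\cdots) \Big)\, ds,
\]
where $s = m/j$; after rescaling and a change of variables $x = (1-s)/(ts)$ this should collapse to the claimed formula $r^n(-K_X-\Delta)^n - (n+1)\int_0^\infty \vol(\cF R^{(x)})\, t\,dx/(1+tx)^{n+2}$. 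The content here is the same computation C. Li performs in \cite[(31),(32)]{Li17}; I would follow that argument, being careful that our normalization of $S(\cF)$ differs by a factor $1/r$.

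\textbf{Step 2: Differentiating $f$ at $t = 0^+$.} With part (1) in hand, write $f(t) = (r^{-1}+At)^{n+1}\mult(\fb_{t,\bullet})$. At $t=0$ we have $\mult(\fb_{0,\bullet}) = r^n(-K_X-\Delta)^n$ (the integral term vanishes), so $f(0) = r^{-(n+1)}\cdot r^n(-K_X-\Delta)^n = r^{-1}(-K_X-\Delta)^n$. Differentiating the product and using that $\frac{d}{dt}(r^{-1}+At)^{n+1}\big|_{t=0} = (n+1)A r^{-n}$, I get
\[
\frac{df}{dt}\Big|_{t=0^+} = (n+1)Ar^{-n}\cdot r^n(-K_X-\Delta)^n + r^{-(n+1)}\cdot \frac{d}{dt}\mult(\fb_{t,\bullet})\Big|_{t=0^+}.
\]
By part (1), $\frac{d}{dt}\mult(\fb_{t,\bullet})\big|_{t=0^+} = -(n+1)\int_0^\infty \vol(\cF R^{(x)})\,dx$ (the factor $t/(1+tx)^{n+2}$ differentiates to $1$ at $t=0$, and I would justify passing the derivative under the integral using linear boundedness, which makes the integrand compactly supported in $x$ uniformly for small $t$, plus a dominated-convergence estimate). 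Recalling $S(\cF) = \frac{1}{r^{n+1}(-K_X-\Delta)^n}\int_0^\infty \vol(\cF R^{(x)})\,dx$, the second term becomes $-r^{-(n+1)}(n+1)\cdot r^{n+1}(-K_X-\Delta)^n S(\cF) = -(n+1)(-K_X-\Delta)^n S(\cF)$. Combining gives $\frac{df}{dt}\big|_{t=0^+} = (n+1)(-K_X-\Delta)^n(A - S(\cF))$, as claimed.

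\textbf{Main obstacle.} The hard part is Step 1: making the Riemann-sum / volume computation rigorous when $\cF$ is an arbitrary linearly bounded filtration (not necessarily finitely generated or an $\N$-filtration). One must justify that the ``double limit'' over $m$ and $j$ behaves well, that $\dim(R/\fb_{t,j})$ really is asymptotic to an integral of $\vol(\cF R^{(x)})$, and that the change of variables is valid; this is where the input from \cite{BC11} and the careful bookkeeping in \cite[5.3]{BHJ17} and \cite{Li17} is essential. A secondary technical point is justifying differentiation under the integral sign at $t = 0^+$, for which linear boundedness of $\cF$ (forcing $\vol(\cF R^{(x)}) = 0$ for $x \gg 0$) is exactly what is needed.
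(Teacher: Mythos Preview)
Your proposal is correct and follows essentially the same approach as the paper: both arguments expand $\dim(R/\fb_{t,j})$ as a sum over the $m$-grading, pass to the limit to obtain $\vol(L)$ minus an integral involving $\vol(\cF R^{(x)})$, and defer the precise evaluation of that integral to C.~Li's computation (the paper cites \cite[(18)--(25)]{Li17} with parameters $c_1=0$, $\alpha=\beta=1/t$, rather than (31)--(32)); for part~(2) both apply the product rule and differentiate the formula from part~(1). Your discussion of the technical obstacles (the double limit and differentiation under the integral) is accurate, and these are exactly the points the paper absorbs into the citation of \cite{Li17}.
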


\begin{proof}
This follows from the argument in \cite[(18)-(25)]{Li17}. For the reader's convenience, we give a brief proof.
For $t\in \R_{>0 }$, we have
\begingroup
\allowdisplaybreaks
\begin{align*}
\mult(\fb_{t,\bullet })&=\lim_{j\to \infty}  \frac{(n+1)!}{j^{n+1}}\dim (R/\fb_{t,j})\\
&=\lim_{j\to \infty}  \frac{(n+1)!}{j^{n+1}}\sum^{\infty}_{m=0}\dim  (R_m/ \cF^{{(j-m)}/{t}} R_m ) \\
&=\lim_{j\to \infty}  \frac{(n+1)!}{j^{n+1}}\sum^{j}_{m=0}\left(\dim R_m - \dim  \cF^{{(j-m)}/{t}} R_m \right)\\
& = \vol(L)  - \lim_{j\to \infty}  \frac{(n+1)!}{j^{n+1}} \sum_{m=0}^j \dim \cF^{{(j-m)}/{t}} R_m .
\end{align*}
\endgroup
Statement (1) now follows from \cite[(25)]{Li17}, where $c_1=0$, $\alpha=\beta=\frac{1}{t}$. 

For (2), compute $$
\frac{df}{dt} \Big \rvert_{t=0^+} =(n+1) A r^{-n} \mult(\fb_{0,\bullet} )+ r^{-n-1} \frac{d}{dt} \left(\mult (\fb_{t, \bullet} )\right)\Big\rvert_{t=0^+}.$$
From (1), we know $\mult(\fb_{0,\bullet})=r^n(-K_X-\Delta)^n$
and
\begin{align*}
 \frac{d}{dt} \left( \mult(\fb_{t,\bullet} ) \right) \Bigr\rvert_{t=0^+}&=-
 (n+1)\int_0^\infty   \left( \vol(\cF R^{(x)})\left( \frac{ 1- (n+1) tx}{(1+tx)^{n+3} } \right) \right) \Bigr\rvert_{t=0^+}dx
.  \end{align*}
Since the latter simplifies to $-(n+1)\int^{\infty}_{0} \vol (\cF R^{(x)})dx$, (2) is complete.
\end{proof}

\section{Uniformly K-stable Fanos}\label{s-uniK}

In this section, we prove a special case of Theorem \ref{t-main} for uniformly K-stable Fano varieties. We will then apply the result to study the moduli functor $\mathcal{M}^{\rm uKs}_{n,V}$. 

\subsection{Separatedness result} 

The following result is a special case of Theorem \ref{t-main} and will be reproved in Section \ref{s-kes}. 
We present its proof independently, since the following argument is simpler than the proof in Section \ref{s-kes}.

\begin{thm}\label{t-sep}
Let $\pi: (X,\Delta) \to C$ and $\pi': (X',\Delta')\to C$ be $\Q$-Gorenstein families of log Fano pairs over a smooth pointed curve $0\in C$.
Assume there exists an  isomorphism
\[
\phi: (X,\Delta) \times_C C^\circ  \to (X',\Delta') \times_C C^\circ
\] 
over $C^\circ : = C\setminus 0 $.
 If $(X_0,\Delta_0)$ is uniformly K-stable and $(X_0',\Delta_0')$ is K-semistable, then $\phi$ extends to an isomorphism  $(X,\Delta) \simeq  (X',\Delta')$ over $C$.
\end{thm}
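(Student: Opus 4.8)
\textbf{Proof strategy for Theorem \ref{t-sep}.}

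The plan is to reduce the statement to the well-known separatedness of the moduli of klt log Calabi--Yau pairs by exploiting the gap in $\delta$-invariants between the two fibers. First I would argue by contradiction: assume $\phi$ does not extend, so the two families are genuinely different degenerations. Fix $r$ so that $L := -r(K_X+\Delta)$ and $L' := -r(K_{X'}+\Delta')$ are Cartier, and choose a common normal proper birational model $\widehat{X}$ dominating both $X$ and $X'$ as in the introduction, with birational transforms $\widetilde{X}_0$ and $\widetilde{X}'_0$ of the two central fibers. The divisor $\widetilde{X}'_0$ then induces a filtration $\cF$ on the section ring $R(X_0, L_0)$, exactly as described in Step 1 of the introduction, and similarly $\widetilde{X}_0$ induces a filtration $\cF'$ on $R(X'_0, L'_0)$. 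The point of the argument is that these filtrations come from divisors over the \emph{total spaces}, so one can compute $S(\cF)$ and $S(\cF')$ via intersection numbers on $\widehat{X}$ (or better, on a model where $\widetilde{X}'_0$ is a divisor), and one gets the relation $\beta + \beta' = 0$ coming from the fact that the log discrepancies $a = A_{X,\Delta+X_0}(\widetilde{X}'_0)$ and $a' = A_{X',\Delta'+X'_0}(\widetilde{X}_0)$ are linked by adjunction along the shared model, together with the compatibility of the volume integrals under the grading shift $p \mapsto mr(a+a') - p$.

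Next I would use the stability hypotheses to pin down these values. Since $(X'_0,\Delta'_0)$ is K-semistable, by Proposition \ref{p-deltafilt} (or its divisorial shadow) applied to the non-trivial linearly bounded filtration $\cF'$ we get $\lct(X'_0,\Delta'_0; \fb_\bullet(\cF')) \geq S(\cF')$; combined with the inequality $\lct(X'_0,\Delta'_0;\fb_\bullet(\cF')) \leq A_{X'_0,\Delta'_0}(\text{the valuation underlying }\cF')$ of the type \eqref{e-lctA} this forces $\beta' \geq 0$, i.e. $S(\cF') \leq a'$ in the appropriately normalized sense. Dually, since $(X_0,\Delta_0)$ is \emph{uniformly} K-stable, $\delta(X_0,\Delta_0) > 1$, so by Proposition \ref{p-deltafilt} there is an $\varepsilon > 0$ with $S(\cF) \leq (1-\varepsilon) A_{X_0,\Delta_0}(\cF)$-type bound, giving $\beta \geq \varepsilon \, a \,(-K_{X_0}-\Delta_0)^n > 0$ whenever the filtration $\cF$ is non-trivial. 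But $\beta + \beta' = 0$ together with $\beta' \geq 0$ forces $\beta \leq 0$, and the uniform bound then forces $a = A_{X,\Delta+X_0}(\widetilde{X}'_0) = 0$. By inversion of adjunction / the klt hypothesis on the family this means $\widetilde{X}'_0$ is not an honest exceptional divisor separating the fibers — more precisely, it forces $X_0$ and $X'_0$ to have the same birational transform, so $\phi$ is an isomorphism in codimension one over $C$, hence (both sides being $\Q$-Fano fibrations with $\pi$-ample anticanonical) an isomorphism over $C$, contradicting our assumption.

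The step I expect to be the main obstacle is making the relation $\beta + \beta' = 0$ and the subsequent ``$a = 0 \Rightarrow \phi$ extends'' implication fully rigorous in the relative (non-$\Q$-Gorenstein-smoothing) setting. Concretely: one must carefully set up the filtration $\cF$ using \emph{some} extension of a section across the special fiber and check it is well-defined and linearly bounded; one must verify the isomorphism of graded rings with the correct grading shift, which requires controlling how orders of vanishing along $\widetilde{X}_0$ and $\widetilde{X}'_0$ interact on $\widehat{X}$ (this is where the relative version of \cite[Section 5]{BHJ17} is needed); and one must deduce from $A_{X,\Delta+X_0}(\widetilde{X}'_0) = 0$ that $\widetilde{X}'_0$ does not give a new valuation — this uses that $(X,\Delta+X_0)$ is log canonical (indeed plt, as $X_0$ is a normal klt fiber) and that the only lc place of coefficient-one type in a plt pair lands on $X_0$ itself, so $\widetilde{X}'_0 = \widetilde{X}_0$. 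Once that identification is in hand, extending $\phi$ and matching the boundaries $\Delta$ and $\Delta'$ is a standard argument using that an isomorphism in codimension one between $\Q$-factorial Mori fiber-type total spaces with the same anticanonical polarization is a global isomorphism (or, more safely, by invoking separatedness of the log Calabi--Yau moduli functor via Proposition \ref{p-filling} as hinted in the introduction).
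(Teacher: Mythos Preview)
Your approach is correct but takes a different route from the paper's Section~\ref{s-uniK} proof. The paper does not use the filtration machinery here: instead it constructs (Lemma~\ref{l-fibebasis}) matching $m$-basis type divisors $B,B'$ on the two families via the Smith normal form of the transition matrix between the section modules over $\cO_{C,0}$; since $\delta(X_0,\Delta_0)>1$ and $\delta(X'_0,\Delta'_0)\geq 1$, for suitable $m$ and after a small generic perturbation one obtains $D,D'\sim_{\Q,C}-K-\Delta$ with $(X_0,\Delta_0+D_0)$ klt and $(X'_0,\Delta'_0+D'_0)$ lc, and Proposition~\ref{p-filling} applies directly. Your argument instead imports the Section~\ref{ss-filt} machinery (the filtrations $\cF,\cF'$, the identity $\beta+\beta'=0$ of Proposition~\ref{p-TSa}, the bound $a\geq\lct(X_0,\Delta_0;\fb_\bullet(\cF))$ of Proposition~\ref{p-lctF}, and Proposition~\ref{p-deltafilt}); uniform K-stability then gives $\beta\geq a\bigl(1-\delta(X_0,\Delta_0)^{-1}\bigr)(-K_{X_0}-\Delta_0)^n>0$ while K-semistability of the other fiber gives $\beta'\geq 0$, a contradiction. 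Your route is cleaner once Section~\ref{ss-filt} is in place and shows transparently why the uniformly K-stable case is immediate from the general setup; the paper's route is self-contained, avoids the graded-ring isomorphism altogether, and serves as a warm-up using only $\delta_m$ and basis-type divisors. Two minor remarks: your opening sentence about reducing to Calabi--Yau separatedness actually describes the \emph{paper's} method, not yours---your argument never invokes Proposition~\ref{p-filling}; and the detour through ``$a=0$'' is unnecessary, since $a>0$ follows directly from plt-ness of $(X,\Delta+X_0)$ once $\widetilde X_0\neq\widetilde X'_0$, giving the contradiction immediately.
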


The proof of Theorem \ref{t-sep} follows from properties of the $\delta$-invariant and the following birational geometry fact. 

\begin{prop}\label{p-filling}
Let $\pi: (X,\Delta) \to C$ and $\pi': (X',\Delta')\to C$ be $\Q$-Gorenstein families of log Fano pairs over a smooth pointed curve $0\in C$.
Assume there exists an  isomorphism
\[
\phi: (X,\Delta) \times_C C^\circ  \to (X',\Delta') \times_C C^\circ
\] 
over $C^\circ : = C\setminus 0$.
If there exist effective horizontal\footnote{A $\Q$-divisor on $X$ or $X'$ is called \emph{horizontal} if its support does not contain a fiber of the map to $C$.} $\Q$-divisors $D$ and $D'$ on $X$ and $X'$ satisfying
\begin{itemize}
\item[(1)] $ D \sim_{\Q,C} -K_X-\Delta$ and  $ D' \sim_{\Q,C} -K_{X'}-\Delta'$ , 
\item[(2)] $D$ is the birational transform of $D'$, and
\item[(3)] $(X_0, \Delta_0+D_0)$ is klt and $(X_0' , \Delta'_0+D_0')$ is lc, 
\end{itemize}
then $\phi$ extends to an isomorphism $(X,\Delta) \simeq (X',\Delta')$ over $C$. 
\end{prop}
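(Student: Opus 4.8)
The plan is to reduce the statement to the separatedness of the moduli of klt log Calabi–Yau pairs, exploiting the fact that $(X,\Delta+D)$ and $(X',\Delta'+D')$ are both log Calabi–Yau over $C$ and are isomorphic over the punctured curve. First I would observe that conditions (1)–(3) guarantee that after possibly shrinking $C$, the pair $(X,\Delta+D)\to C$ is a $\Q$-Gorenstein family of klt log Calabi–Yau pairs (using that $K_X+\Delta+D\sim_{\Q,C}0$ and that kltness is an open condition, together with (3) on the central fibre), while $(X',\Delta'+D')\to C$ is a log canonical family with $K_{X'}+\Delta'+D'\sim_{\Q,C}0$. The isomorphism $\phi$ identifies the two families over $C^\circ$, and carries $D$ to $D'$ by (2). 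The goal is to show $\phi$ extends across $0$; since $D$ is the birational transform of $D'$ and the horizontal assumption ensures no fibre is contained in the supports, an extension of $\phi$ as an isomorphism of the ambient families will automatically match $\Delta$ with $\Delta'$ and $D$ with $D'$, hence $-K_X-\Delta$ with $-K_{X'}-\Delta'$.

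Next I would pass to a common resolution: choose a smooth projective model $\widehat{X}\to C$ dominating both $X$ and $X'$ via proper birational morphisms $p\colon\widehat{X}\to X$ and $p'\colon\widehat{X}\to X'$ extending $\phi$. Write $\Gamma$ and $\Gamma'$ for the sum of the strict transforms of $\Delta+D$, resp.\ $\Delta'+D'$, together with the reduced exceptional divisors, crepantly adjusted so that $(\widehat{X},\Gamma)$ and $(\widehat{X},\Gamma')$ are log smooth models of the two log Calabi–Yau families. Because the two families agree over $C^\circ$, the divisors $\Gamma$ and $\Gamma'$ can differ only in components supported over $0$. Running a relative $(K_{\widehat X}+\Gamma)$-MMP over $X'$ (which terminates by \cite{BCHM10}), one obtains a birational contraction to a model over which $K+\Gamma$ is relatively semiample; comparing discrepancies and using that $(X,\Delta+D)$ is klt while $(X',\Delta'+D')$ is only lc, one finds that every $p'$-exceptional divisor has nonnegative discrepancy with respect to $(X,\Delta+D)$, and by the negativity lemma the induced map $X\dashrightarrow X'$ is a morphism; symmetrically it is an isomorphism. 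This is the standard mechanism behind separatedness of KSBA-type moduli for log Calabi–Yau fibrations.

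The main obstacle — and where care is needed — is the asymmetry in the hypotheses: we only assume $(X_0,\Delta_0+D_0)$ klt and $(X_0',\Delta_0'+D_0')$ lc, not that both are klt, so one cannot invoke a symmetric separatedness statement directly. The key point to push through is that kltness of the central fibre of the first family forces the first family to be \emph{relatively canonical} enough that no divisor over $X$ with center on $X_0$ appears with positive coefficient, so that in the MMP comparison the exceptional divisors of $p$ are all contracted before any exceptional divisors of $p'$; the lc (rather than klt) bound on the second fibre is then exactly what is needed to prevent the extraction of a new divisor on the $X'$ side. Concretely, I expect this to come down to the inequality $A_{X,\Delta+D}(E)\ge A_{X',\Delta'+D'}(E)$ for all divisors $E$ over the generic point of $0$, combined with $A_{X,\Delta+D}(E)>0$ (klt) and $A_{X',\Delta'+D'}(E)\ge 0$ (lc), which pins down that the crepant models coincide. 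Once the ambient isomorphism $X\simeq X'$ over $C$ is established, matching of $\Delta$, $D$, and hence $\Delta'$, $D'$ is immediate from (2) and the horizontality hypothesis, completing the proof.
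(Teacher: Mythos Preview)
Your framing is right: the proposition is a log Calabi--Yau separatedness statement, and the proof should proceed on a common resolution by comparing discrepancies. But the execution has real gaps, and the MMP detour does not help.

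First, the relative MMP you propose is vacuous here. If you take $\Gamma$ to be a crepant boundary for $(X,\Delta+D)$, then $K_{\widehat X}+\Gamma=\psi^*(K_X+\Delta+D)\sim_{\Q,C}0$, so every step of the $(K_{\widehat X}+\Gamma)$-MMP over $X'$ is already a minimal model and nothing gets contracted; you learn nothing about the map $X\dashrightarrow X'$. If instead $\Gamma$ is the reduced log smooth boundary, there is no crepant relation tying the output to $X$ or $X'$. Either way, invoking \cite{BCHM10} does not produce the morphism you want, and the negativity lemma has nothing to bite on since there is no relative ampleness of the log canonical class on either side.

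Second, the discrepancy inequality you ``expect,'' namely $A_{X,\Delta+D}(E)\ge A_{X',\Delta'+D'}(E)$ for all $E$ with center over $0$, is false in general. Take $E=\widetilde X_0$, the strict transform of $X_0$ on a common resolution: then $A_{X,\Delta+D}(\widetilde X_0)=1$ while $A_{X',\Delta'+D'}(\widetilde X_0)=1+a'$ with $a'\ge 0$, so the inequality can fail (and certainly is not uniform in the direction you need). Consequently the final step, in which you ``pin down that the crepant models coincide,'' does not go through; and the appeal to symmetry is blocked by the asymmetric klt/lc hypotheses, as you yourself note.

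The paper's argument is much more direct and avoids all of this. On a common resolution one writes the two crepant pullback formulas, obtaining log discrepancies $a:=A_{X,\Delta+D+X_0}(\widetilde X'_0)$ and $a':=A_{X',\Delta'+D'+X'_0}(\widetilde X_0)$. Inversion of adjunction together with hypothesis (3) gives $a>0$ (from kltness of the first central fibre) and $a'\ge 0$ (from lcness of the second). Subtracting the two formulas and using $K_X+\Delta+D\sim_{\Q,C}0\sim_{\Q,C}K_{X'}+\Delta'+D'$ shows that $a\,\widetilde X'_0-a'\,\widetilde X_0+(P-P')$ is a rational multiple $c$ of the full central fibre $\psi^*X_0$. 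Reading off the coefficient of $\widetilde X'_0$ forces $c>0$, while reading off the coefficient of $\widetilde X_0$ forces $c\le 0$; contradiction. This is the argument you should supply instead of the MMP/negativity lemma sketch.
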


The above proposition is well known to experts and follows from the separatedness of the moduli functor of klt log Calabi-Yau pairs (e.g. see \cite[Theorem 4.3]{Oda12}, \cite[Theorem 5.2]{LWX14}). 
For the convenience of the reader, we prove the result.

\begin{proof}
Fix a  common log resolution $\widehat{X}$ of $(X,\Delta)$ and $(X',\Delta')$ 
\begin{center}
\begin{tikzcd}[column sep=scriptsize, row sep=scriptsize]
  & \widehat{X} \arrow[dr, "\psi'"] \arrow[dl,swap,"\psi"]& \\
  X  \arrow[rr, dashrightarrow, "\phi"] & & X'
   \end{tikzcd}   \end{center}
and write $\widetilde{X}_0$ and $\widetilde{X}'_0$ for the birational transforms of $X_0$ and $X'_0$ on $\widehat{X}$. 
   
First, assume $\widetilde{X}_0 =\widetilde{X}'_0$. This equality implies $\phi: X \dashrightarrow X'$ is an isomorphism in codimension one. 
   Thus, $\phi$ induces an isomorphism 
   $$\pi_*\cO_{X}(-m(K_X+\Delta ))  \simeq \pi'_* \cO_{X'} (-m( K_{X'}+\Delta'))$$ for all 
 $m\in \N$. Since
 \[
 X = \Proj_C \bigoplus_{m \geq 0 } \pi_*\cO_{X}(-m(K_X+\Delta )) \text{ and }
  X' = \Proj_C \bigoplus_{m \geq 0 } \pi'_*\cO_{X'} (-m( K_{X'}+\Delta')) 
 ,\]
 we conclude $\phi$ extends to an isomorphism over $C$.
 
We now assume  $\widetilde{X}_0 \neq  \widetilde{X}'_0$ and aim for a contradiction.   
Write
\[
  K_{\widehat{X}}+   \psi_*^{-1}(\Delta+D)  = \psi^*(K_X+\Delta+ D) +  a   \widetilde{X}'_0 +P\]
and 
\[
\,\,\,K_{\widehat{X} }+   {\psi'_*}^{-1}(\Delta'+D') =\psi'^*(K_{X'}+\Delta'+ D') +  a'   \widetilde{X}_0 +P'\]
where the components of $\Supp(P) \cup \Supp(P')$ are both $\psi$ and $\psi'$-exceptional.
By assumption (2), $P-P'$ is supported on $\widehat{X}_0$. 

Inversion of adjunction and our assumption that $(X_0, \Delta_0+D_0)$ is klt
imply $(X, \Delta+D+ X_0) $ is plt in a neighborhood of $X_0$. 
Hence, 
\[
-1< a(   \widetilde{X}'_0, X, \Delta+D+X_0) =a- \ord_{\widetilde{X}'_0} ( X_0 ).\]
Since $\ord_{ \widetilde{X}'_{0}} ( X_0) = 1$, we see $a >0$.  
The same argument, but with the assumption that  $(X_0', \Delta'_0+D_0')$ is lc
implies $a'\geq 0$. 

Observe
$a  \widetilde{X}'_0  - a'  \widetilde{ X}_0 + (P-P') \sim_{\Q,C} 0 $,
since 
$K_X+ \Delta+D \sim_{\Q,C} 0 $, $K_{X'}+ \Delta'+D' \sim_{\Q,C} 0$, and 
$\psi_*^{-1}(D+\Delta)={\psi'_*}^{-1}(D'+\Delta')$.
Therefore, there exists a rational number $c$ so that  
\[
a  \widetilde{X}'_0  - a'  \widetilde{ X}_0 + (P-P') \sim_{\Q,C} c \psi^*(X_0).\]
Comparing the coefficients of $\widetilde{X}'_0$ on the two sides implies $c>0$, 
while comparing the coefficients of  $\widetilde{X}_0$ implies $c\leq 0$. 
This is a contradiction.\end{proof}

\begin{lem}\label{l-fibebasis}
Keep the notation and setup of Theorem \ref{t-sep}. 
If $m\in \Z_{>0}$ is sufficiently divisible, then there exist  effective horizontal $\Q$-divisors $B$ and $B'$  on $X$ and $X'$ such that
\begin{itemize}
\item[(1)] $B \sim_{\Q,C} -K_{X}-\Delta$ and $B' \sim_{\Q,C} -K_{X'}-\Delta'$,
\item[(2)] $B$ is the birational transform of $B'$, and
\item[(3)]  $B_0$ and $B'_0$ are $m$-basis type with respect to $(X_0,\Delta_0)$ and $(X'_0,\Delta'_0)$.
\end{itemize}
\end{lem}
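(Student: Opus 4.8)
The plan is to build $B$ and $B'$ simultaneously by choosing a single basis of the relative section ring that restricts, on each side, to an $m$-basis of the fibers, and then defining $B$ and $B'$ as the average of the resulting divisors. Let $L := -r(K_X + \Delta)$ be Cartier (for $r$ sufficiently divisible) and let $L' := -r(K_{X'}+\Delta')$ be its counterpart; after shrinking $C$ to an affine curve we may work with the relative section modules $\pi_* \cO_X(mL)$ and $\pi'_* \cO_{X'}(mL')$, which are locally free $\cO_C$-modules (for $m$ divisible) whose formation commutes with base change to $0 \in C$, because $(X_0,\Delta_0)$ and $(X_0',\Delta_0')$ are log Fano and $mL_0$, $mL_0'$ have vanishing higher cohomology. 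The isomorphism $\phi$ over $C^\circ$ identifies $\pi_*\cO_X(mL)|_{C^\circ}$ with $\pi'_*\cO_{X'}(mL')|_{C^\circ}$, so both are $\cO_C$-lattices inside the same $K(C)$-vector space $H^0(X_{K(C)}, mL_{K(C)})$.

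First I would choose, by the theory of elementary divisors over the PID $\cO_{C,0}$, a basis $s_1, \dots, s_{N_m}$ of the localized module $(\pi_*\cO_X(mL))_{(0)}$ and integers $c_1, \dots, c_{N_m}$ so that $t^{c_1} s_1, \dots, t^{c_{N_m}} s_{N_m}$ is a basis of $(\pi'_*\cO_{X'}(mL'))_{(0)}$, where $t$ is a local parameter at $0$. Restricting to $X_0$, the images $\bar s_1, \dots, \bar s_{N_m}$ form a basis of $H^0(X_0, mL_0)$ — up to discarding the $s_i$ whose restriction vanishes, but by the elementary-divisor normalization one can in fact arrange that all restrictions are nonzero and form a basis (this is where the freedom in the choice of basis is used). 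Define
\[
B := \frac{1}{mN_m}\sum_{i=1}^{N_m} \{ s_i = 0 \}, \qquad B' := \frac{1}{mN_m}\sum_{i=1}^{N_m} \{ t^{c_i} s_i = 0 \}.
\]
These are effective horizontal $\Q$-divisors (horizontal because no $s_i$ vanishes on a fiber, after discarding the fiber components), and $B$ is the birational transform of $B'$ since the two divisors differ only by the fiber $X_0$: indeed $\{t^{c_i} s_i = 0\} = \{s_i = 0\} + c_i X'_0$ as divisors on $X'$ after transporting via $\phi$, so $\sum c_i$ worth of copies of the special fiber is the only discrepancy, and the birational transform forgets the fiber component. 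By construction $B \sim_{\Q,C} -K_X-\Delta$ and $B' \sim_{\Q,C} -K_{X'}-\Delta'$, giving (1) and (2). Finally $B_0 = \frac{1}{mN_m}\sum \{\bar s_i = 0\}$ is $m$-basis type for $(X_0,\Delta_0)$ by definition, and likewise $B_0'$ is $m$-basis type for $(X_0',\Delta_0')$ since the restrictions $\overline{t^{c_i} s_i}$ form a basis of $H^0(X_0', mL_0')$; this is (3).

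The main obstacle I anticipate is bookkeeping about which sections restrict to a basis of the special fiber and ensuring the two restrictions simultaneously give bases — one must check that the elementary-divisor basis can be chosen so that $\bar s_1,\dots,\bar s_{N_m}$ and $\overline{t^{c_1}s_1},\dots,\overline{t^{c_{N_m}}s_{N_m}}$ are each linearly independent over $\mathbbm{k}$, which amounts to a compatible choice of bases for the associated graded of the two lattices (a standard Smith-normal-form argument over $\cO_{C,0}$). One must also verify that the divisors $\{s_i = 0\}$, computed on $X$ versus pulled back through $\phi$ to $X'$, differ exactly by an integer multiple of the central fiber and nothing more — this follows because $\phi$ is an isomorphism in codimension one away from $0$ together with properness, so the only prime divisor that can appear with differing coefficient is $X_0 \leftrightarrow X_0'$. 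Everything else is formal.
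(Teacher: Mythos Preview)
Your approach is correct and essentially identical to the paper's: both arguments use the Smith normal form (elementary divisors) over the PID $\cO_{C,0}$ to diagonalize the transition map between the two lattices $\pi_*\cO_X(mL)_{(0)}$ and $\pi'_*\cO_{X'}(mL')_{(0)}$ inside their common generic fiber, and then average the resulting divisors.

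Your hedging at the end is unnecessary. If $\{s_i\}$ is a basis of a free $\cO_{C,0}$-module $M$, then the images $\bar s_i$ in $M/tM \simeq H^0(X_0,mL_0)$ automatically form a basis by Nakayama; no $s_i$ can restrict to zero, and no $\{s_i=0\}$ can contain $X_0$ (else $s_i\in tM$). The same applies to $\{t^{c_i}s_i\}$ on the $X'$ side, since by the elementary-divisor theorem these are a basis of the second lattice. So horizontality of $B,B'$ and the basis property on both fibers are immediate, and your anticipated ``bookkeeping obstacle'' does not arise.
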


\begin{proof}
Fix a positive integer $m$ so that $L:=-m(K_X-\Delta)$ and $L':=-m(K_{X'}-\Delta')$ are Cartier
and $\pi_* \cO_X (L)$ and $\pi'_*\cO_{X'}(L)$ are nonzero. 
Since $H^i(X_t, \cO_{X_t}(L_t))$ and $H^i(X'_t, \cO_{X'_t}(L'_t))$ are zero for all $i>0$ and $t\in C$ by Kawamata-Viehweg vanishing, $\pi_*\cO_{X} (L)$ and $\pi'_*\cO_{X'}(L')$ are vector bundles. Furthermore, the sheaves satisfy cohomology and base change.

Now, the birational map $\phi$ induces a map from local sections of $\pi_* \cO_{X}(L)$ to rational sections of $\pi'_* \cO_{X'} (L')$. After twisting by $dX_0'$, where $d\gg0$, we get a morphism 
\[
 \pi_*\cO_{X}(L)   \to \pi'_*\cO_{X'}(L' + d X_0')\]
 that is an isomorphism away from $0\in C$. 
 Tensoring by $\cO_{C,0}$ gives a morphism
 \[
\varphi :  \pi_*\cO_{X}(L)  \otimes_{\cO_{C}} \cO_{C,0} \to \pi'_*\cO_{X'}(L'+ d X_0')\otimes_{\cO_C} \cO_{C,0},\]
of locally free $\cO_{C,0}$ modules that is an isomorphism after tensoring with $K(C)$. 

 Write $t$ for the uniformizer of $\cO_{C,0}$.
  Since $\cO_{C,0}$ is a principal ideal domain, there exist bases $\{ s_1,\ldots, s_{N} \}$ and $\{s_1', \ldots, s'_{N} \}$ for the above free modules so that the transformation matrix is diagonal. Hence, for each $1 \leq i \leq N$, there exist  $p_i \in \Z_{\geq 0}$ and $a_i \in \cO_{C,0}^\times$ so that $\varphi (s_i ) = a_i t^{p_i}   s_i'$.

For a sufficiently small neighborhood $0\in U\subset C$, we may extend each $s_i$ to a section $\tilde{s}_i \in \pi_*\cO_{X}(L)(U)$ and  $s_i'$ to a section $\tilde{s}_i' \in \pi'_*\cO_{X'}(L')(U)$. Let $B$ and $B'$ denote the closures of
 \[
 \frac{1}{mN}\big( \{  \tilde{s}_1 = 0 \} + \cdots + \{ \tilde{s}_{N} =0 \} \big) \text{ and }
  \frac{1}{mN}\big( \{  \tilde{s}_1' = 0 \} + \cdots + \{ \tilde{s}_{N}' =0 \} \big)
 .\]
 in $X$ and $X'$.
 By construction, $B_0$ and $B_0'$ are both $m$-basis type divisors and $B$ is the birational transform of $B'$. \end{proof}

\begin{proof}[Proof of Theorem \ref{t-sep}]
Since $X_0$ is uniformly K-stable and $X_0'$ is K-semistable, 
$$\delta(X_0,\Delta_0) >1\qquad \mbox{and} \qquad \delta({X_0'},\Delta'_0)\geq 1.$$ Hence, we may choose $0<\e \ll 1$ so that 
\begin{equation}\label{eq:deltine1}
\frac{1-\e}{\delta({X_0},\Delta_0)} + \frac{\e}{ \alpha(X_0,\Delta_0)} < 1,
\end{equation}
where $\alpha(X_0,\Delta_0)$ is Tian's $\alpha$-invariant, i.e.
$$\alpha(X_0,\Delta_0)=\inf \{\ \lct(X_0,\Delta_0;D)\  |\  0\le D\sim_{\bQ} -K_{X_0}-\Delta_0\}.$$
Next, choose a positive integer $M$ so that
\begin{equation}\label{eq:deltine2}
\frac{1-\e}{\delta_{m}({X_0},\Delta_0)} + \frac{\e}{ \alpha(X_0,\Delta_0) }< 1 \text{ and } \delta_{m} ({X_0'},\Delta_0') >1-\e
\end{equation}
for all positive integers $m$ divisible by $M$. Such a choice is possible by \eqref{eq:deltine1}, the inequality $\delta( X'_0, \Delta'_0)\geq 1$, and the fact that $\delta$ is a limit.

Now, fix a positive integer $m$ divisible by $M$ so that the conclusion of Lemma \ref{l-fibebasis} holds for $m$ and $-m (K_{X'} +\Delta' )$ is relatively base point free over $C$. Hence, we may find $\Q$-divisors $B \sim_{\Q,C} -K_X-\Delta$
and $B' \sim_{\Q,C}-K_{X'} -\Delta'$ satisfying the conclusion of Lemma \ref{l-fibebasis} for $m$. Since $B_0$ and $B_0'$ are $m$-basis type, 
$$\lct(X_0,\Delta_0;B_0) \geq \delta_{m} ({X_0},\Delta_0) \mbox{\ \ \ and \ \ \ }\lct(X'_0,\Delta'_0;B_0') \geq \delta_{m} ({X_0'},\Delta_0')>1-\e.
$$ 
The latter implies $(X'_0, \Delta'_0+ (1-\e) B_0')$ is lc. 

Since $-m(K_{X'}+\Delta')$ is relatively base point free over $C$, after shrinking $C$ in a neighborhood of $0$, we may apply \cite[Lemma 5.17]{KM98} to find an effective divisor $G' \in |-m (K_{X'}+\Delta')|$ in general position so that  $(X'_0, \Delta'_0+(1-\e) B_0' +(\e/m)  G_0')$ remains lc. Write $G$ for the birational transform of $G'$ on $X$. Note that $G \sim_{\Q,C} -m(K_{X}+\Delta)$, since the statement holds over $C^\circ$. Thus, $\lct(X_0,\Delta_0;(1/m)G_0) \geq  \alpha(X_0,\Delta_0)$. 

Now, consider the divisors
\[
D :=(1-\e) B + \frac{\e}{m} G \hspace{.5 cm}
\text{ and }
\hspace{.5 cm}
D' : =(1-\e) B'+  \frac{\e}{m} G'.\]
Observe that $D \sim_{\Q,C} -K_{X}-\Delta$ and  $D' \sim_{\Q,C} -K_{X'}-\Delta'$. As mentioned above, $(X_0, \Delta'_0+D_0')$ is lc. 
Additionally, the pair $(X_0,\Delta_0+ D_0)$ is klt. Indeed, since 
$$1/ \lct(D+F) \leq 1/\lct(D) + 1/\lct(F)$$
for any two effective $\Q$-Cartier $\Q$-divisors $D$ and $F$ on a klt pair, we know 
\begin{align*}
\frac{1} {\lct(X_0,\Delta_0;D_0)}
&\leq \frac{1}{ \lct( X_0,\Delta_0;(1-\e)B_0)}
+ \frac{1}{\lct( X_0,\Delta_0;(\e/m)G_0)} \\
& \leq \frac{1-\e}{\delta_{m}(X_0,\Delta_0) } + \frac{\e}{ \alpha({X_0},\Delta_0) }
 \end{align*}
 which is $<1$ by \eqref{eq:deltine2}.
 Proposition \ref{p-filling} now implies $\phi$ extends to an isomorphism.
\end{proof}

\begin{rem}
If $(X_0,\Delta_0)$ and $(X'_0,\Delta'_0)$ are only assumed to be K-semistable, then they are not necessarily isomorphic (but are S-equivalent by Theorem \ref{t-main}). Therefore, we do not expect the the above strategy to be useful in this more general case. 
\end{rem}

 Recall, if $(X,\Delta)$ is a log Fano pair, then $\Aut(X,\Delta)$ is the closed subgroup of $\Aut(X)$ defined by
\[
\Aut(X,\Delta): = \{  g \in \Aut(X) \, \vert \, g^* \Delta = \Delta \}
.
\]
The following result is an immediate corollary of Theorem \ref{t-sep} and a special case of Corollary \ref{c-aut}.

\begin{cor}\label{c-aut2}
Let $(X,\Delta)$ be a log Fano pair. If $(X,\Delta)$ is uniformly K-stable, then ${\rm Aut}(X,\Delta)$ is finite. 
\end{cor}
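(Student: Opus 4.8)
\textbf{Proof proposal for Corollary \ref{c-aut2}.}

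The plan is to argue by contradiction, using Theorem \ref{t-sep} applied to a family built out of a one-parameter subgroup of $\Aut(X,\Delta)$. First I would recall that $\Aut(X,\Delta)$ is an affine algebraic group: indeed $\Aut(X)$ is a linear algebraic group because $-K_X-\Delta$ is ample, and $\Aut(X,\Delta)$ is a closed subgroup of it. If $\Aut(X,\Delta)$ were infinite, then by Chevalley's structure theory (or simply by positive-dimensionality of a linear algebraic group) there exists a nontrivial one-parameter subgroup $\lambda:\mathbb{G}_m \hookrightarrow \Aut(X,\Delta)$, i.e. a nontrivial algebraic action of $\mathbb{G}_m$ on $(X,\Delta)$.

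Next I would manufacture two $\Q$-Gorenstein families of log Fano pairs over $C=\mathbb{A}^1$ together with an isomorphism over $C^\circ=\mathbb{A}^1\setminus 0$ that does \emph{not} extend over $0$. Take the trivial family $\pi:(X\times \mathbb{A}^1, \Delta\times \mathbb{A}^1)\to \mathbb{A}^1$ for the first, and for the second take again $(X\times \mathbb{A}^1,\Delta\times\mathbb{A}^1)\to \mathbb{A}^1$ but glue it to the first over $C^\circ$ via the isomorphism
\[
\phi:(X,\Delta)\times C^\circ \to (X,\Delta)\times C^\circ, \qquad \phi(x,t) = (\lambda(t)\cdot x,\, t).
\]
Concretely, the second family $(X',\Delta')\to C$ is obtained by regluing $X\times \mathbb{A}^1$ along the automorphism $\lambda(t)$; since $\lambda(t)$ preserves $\Delta$ for all $t\in \mathbb{G}_m$, this produces a genuine $\Q$-Gorenstein family of log Fano pairs, with central fiber $(X_0',\Delta_0')\simeq (X,\Delta)$. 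Since $(X,\Delta)$ is uniformly K-stable by hypothesis, both central fibers $(X_0,\Delta_0)\simeq (X,\Delta)$ and $(X_0',\Delta_0')\simeq (X,\Delta)$ are uniformly K-stable, and in particular K-semistable. Thus Theorem \ref{t-sep} applies and forces $\phi$ to extend to an isomorphism of the two families over all of $C$.

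Finally I would derive the contradiction: an extension of $\phi$ over $0$ is, in particular, an isomorphism $(X,\Delta)\times \{0\}\to (X,\Delta)\times\{0\}$ of central fibers, and more importantly the extended $\phi$ restricted to the generic point of $C$ agrees with $x\mapsto \lambda(t)\cdot x$. Extending across $0$ means that $\lambda(t)$, viewed as a rational map $X\times \mathbb{A}^1 \dashrightarrow X\times \mathbb{A}^1$ over $\mathbb{A}^1$, is in fact a morphism at $t=0$; equivalently, the map $\mathbb{G}_m \to \Aut(X,\Delta)$, $t\mapsto \lambda(t)$, extends to a morphism $\mathbb{A}^1\to \Aut(X,\Delta)$. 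But a one-parameter subgroup $\mathbb{G}_m\to \Aut(X,\Delta)$ extends over $0$ only if it is trivial (otherwise the limit $\lim_{t\to 0}\lambda(t)$ does not exist in the group, since $\mathbb{G}_m\hookrightarrow \mathbb{A}^1$ is not a group extension and $\Aut(X,\Delta)$ is affine — one can see this by noting $\lambda$ and $\lambda^{-1}=\lambda(t^{-1})$ would both have to extend, forcing $\lambda$ to factor through the unit). This contradicts the non-triviality of $\lambda$. Hence $\Aut(X,\Delta)$ is finite. I expect the main technical point to be the careful verification that the regluing construction yields an honest $\Q$-Gorenstein family of log Fano pairs (properness, normality of the total space, $\Q$-Cartierness of $-K_{X'}-\Delta'$), and the clean statement that a nontrivial one-parameter subgroup of an affine algebraic group cannot extend over $0$ — both are standard but deserve a sentence each.
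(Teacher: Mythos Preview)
Your overall strategy---use Theorem \ref{t-sep} to force an extension of an isomorphism over the puncture, then derive a contradiction---is the same as the paper's. However, there is a genuine gap in your reduction step. You claim that if $\Aut(X,\Delta)$ is infinite then it contains a nontrivial one-parameter subgroup $\lambda:\mathbb{G}_m\hookrightarrow \Aut(X,\Delta)$. This is false in general: a positive-dimensional linear algebraic group need not contain any copy of $\mathbb{G}_m$ (take $\mathbb{G}_a$, or any unipotent group). A priori nothing rules out $\Aut(X,\Delta)^\circ$ being unipotent, so your construction of $\lambda$ may fail.

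The paper sidesteps this entirely by running the valuative criterion for properness rather than a contradiction via a specific subgroup. One takes an \emph{arbitrary} morphism $g:C^\circ\to \Aut(X,\Delta)$ from a punctured smooth curve, forms the induced isomorphism of trivial families
\[
(X\times C,\Delta\times C)\times_C C^\circ \longrightarrow (X\times C,\Delta\times C)\times_C C^\circ
\]
over $C^\circ$, and applies Theorem \ref{t-sep} to conclude that it extends over $0$; hence $g$ extends to $C$. This shows $\Aut(X,\Delta)$ is proper, and since it is affine it must be finite. No structure theory of algebraic groups is needed beyond ``affine and proper implies finite.''

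If you prefer to keep the contradiction format, you can patch your argument as follows: a positive-dimensional connected linear algebraic group contains a one-dimensional connected subgroup, which is either $\mathbb{G}_m$ or $\mathbb{G}_a$. In the $\mathbb{G}_m$ case proceed exactly as you did. In the $\mathbb{G}_a$ case, take the (non-homomorphic) morphism $\mathbb{G}_m\to \mathbb{G}_a\subset \Aut(X,\Delta)$, $t\mapsto 1/t$; this does not extend over $t=0$, yet Theorem \ref{t-sep} applied to the corresponding twisted trivial families says it must. Either way you reach a contradiction. But once you allow arbitrary morphisms $C^\circ\to \Aut(X,\Delta)$ rather than group homomorphisms, you are already doing the paper's proof.
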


\begin{proof}
Since $\Aut(X,\Delta)$ is a linear algebraic group, it is affine. 
To conclude that $\Aut(X,\Delta)$ is finite, it suffices to show that it is proper.
To see the properness, consider a map $g:C^\circ \to \Aut(X,\Delta)$, where $0 \in C$ is a smooth pointed curve and $C^\circ = C\setminus 0$. The map $g$ induces an isomorphism 
$$(X\times C,\Delta\times C)\times_C C^\circ   \to (X\times C,\Delta\times C)\times_C C^\circ 
$$
over $C^\circ$. 
By applying Theorem \ref{t-sep} to the above isomorphism, we see $f$ extends to a map $\overline{g}: C\to \Aut(X,\Delta)$. Hence, $\Aut(X,\Delta)$ is proper, and the proof is complete.
\end{proof}

In \cite[Corollary E]{BHJ16}, it is shown that the polarized automorphism group of a  uniformly K-stable  polarized manifold $(X,L)$ is finite. Their proof uses analytic tools.

\begin{rem}
Our proofs of Theorem \ref{t-sep} and Corollary \ref{c-aut2} extend to the case of polarized klt pairs $ (X,\Delta;L)$ (that is, $(X,\Delta)$ is a projective klt pair and $L$ an ample $\Q$-Cartier divisor on $X$) such that $K_{X}+\Delta+L$ is nef and $\delta(X,\Delta;L)>1$.  
\end{rem}

\subsection{Moduli spaces}

\begin{proof}[Proof of Corollary \ref{c-moduli}]
As previously mentioned, the result relies on \cite{Jia17,BL18} and Theorem \ref{t-sep}. 
Indeed, \cite{Jia17} (see also \cite{Che18} or \cite[6.14]{LLX18}) states that the set of varieties $\mathcal{M}_{n,V}^{\rm uKs}(k)$ is bounded. Hence, there exists a positive integer $M$ so that $-MK_{X}$ is a very ample Cartier divisor for all $X \in \mathcal{M}_{n,V}^{\rm uKs}(k)$.
Furthermore, the set of Hilbert functions $m\mapsto \chi\big( \omega_{X}^{[-mM]}\big)$ with $X \in  \mathcal{M}_{n,V}^{\rm uKs}(k)$ is finite. 

For such a Hilbert function $h$, consider the subfunctor $\mathcal{M}_{h}^{\rm uKs} \subset \mathcal{M}_{n,V}^{\rm uKs}$
parameterizing uniformly K-semistable $\Q$-Fano varieties with Hilbert function $h$. Note that $\mathcal{M}_{n,V}^{\rm uKs}= \coprod_{h} \mathcal{M}_{h}^{\rm uKs}$. Set $N:=h(1)-1$, and let ${\rm Hilb}(\PP^N)$ be the Hilbert scheme parameterizing closed subschemes of $\PP^N$ 
with Hilbert polynomial $h$. Write $X\to  {\rm Hilb}(\PP^N)$ for the corresponding universal family.

Now, let $U \subset  {\rm Hilb}(\PP^N)$ denote the open subscheme parameterizing normal, Cohen-Macaulay varieties.
By \cite[3.11]{HK04},
 there is a locally closed subscheme $V \subset U$ such that a map
$T\to U$ factors through $V$ if and only if there is an isomorphism $\omega_{X_T/ T}^{[-M]} \simeq \mathcal{L}_T \otimes \cO_{X_T}(1)$, 
where $\mathcal{L}_T$ is the pullback of a line bundle from $T$.
By applying \cite{BL18}  to the normalization of $V$, we see
$$V': =\{t \in V \, \vert \, X_{\overline{t}}  \text{ is a uniformly K-stable $\Q$-Fano variety} \}$$
is open in $V$. 
Finally,  we apply \cite[25]{Kol08} or \cite{AH11} to find a locally closed decomposition  $W\to V'$ such that a morphism $T \to V'$ factors through $W$ if and only if $X_{T} \to X$ satisfies Koll\'ar's condition. 

As a consequence of the above discussion, $\mathcal{M}^{\rm uKs}_{h}\simeq [W/{\rm PGL}(N+1)]$. 
Theorem \ref{t-sep} implies $\mathcal{M}^{\rm uKs}_{h}$ is  a separated Deligne-Mumford stack. 
Furthermore, we may apply \cite{KM97} to see  $\mathcal{M}^{\rm uKs}_{h}$  has a coarse moduli space ${M}^{\rm uKs}_{h}$, which is a separated algebraic space.
\end{proof}

\section{Places  computing the $\delta$-invariant}\label{s-delta1}

In this section, we will study the cases when  valuations, ideals, and $\Q$-divisors compute the $\d$-invariant. 
The results proved here are related to Conjecture \ref{conj-special} and will be used in the proof of Theorem \ref{t-main}.

\subsection{Divisors computing $\delta$}\label{ss-divisorial}

\begin{thm}\label{t-delta=1}
Let $(X,\Delta)$ be a K-semistable log Fano pair. If $E$ is a divisor over $X$ satisfying  $\delta(X,\Delta) = \frac{A_{X,\Delta}(E)}{S(E)} =1$,
 then $E$ is dreamy and induces a non-trivial special test configuration $(\cX,\cD)$ such that $\Fut(\cX,\cD)=0$. In particular, $(X,\Delta)$ is not K-stable.  
\end{thm}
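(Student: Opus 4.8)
\textbf{Proof proposal for Theorem \ref{t-delta=1}.}

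The plan is to realize $E$ as a divisor arising from the cone construction and to exploit the derivative formula \eqref{e-betader} together with the MMP-theoretic analysis of \cite[Theorem 3.2]{LWX18}. First I would pass to the cone $(Z,\Gamma)$ over $(X,\Delta)$ with respect to $L = -r(K_X+\Delta)$, with vertex $x \in Z$, and consider the ray of quasimonomial valuations $\{v_t\}_{t \in [0,\infty)} \subset \Val_{Z,x}$ with $v_0 = \ord_{X_{\rm zs}}$ the canonical valuation and $v_\infty = \ord_{E_\infty}$. Since $(X,\Delta)$ is K-semistable, Theorem \ref{t-cone} tells us that $v_0$ is a minimizer of $\hvol_{(Z,\Gamma),x}$. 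On the other hand, the hypothesis $A_{X,\Delta}(E) = S(E)$ means precisely that $\beta_{X,\Delta}(E) = 0$, so by \eqref{e-betader} the function $t \mapsto \hvol(v_t)$ has vanishing right-derivative at $t = 0^+$. Thus along this ray the normalized volume function is critical at the minimizer $v_0$ — this is exactly the situation analyzed in \cite{LWX18}.

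The next step is to apply the argument of \cite[Theorem 3.2]{LWX18} to conclude that the whole ray $\{v_t\}$ consists of minimizers of $\hvol_{(Z,\Gamma),x}$ (or at least that $v_\infty$, suitably rescaled, is one), and to invoke the structural results there: a minimizer of the normalized volume at a klt singularity is quasimonomial, has finitely generated associated graded ring, and its induced degeneration is a klt Fano cone singularity. Concretely, one uses that $\beta_{X,\Delta}(E) = 0$ forces the graded sequence of ideals $\fa_\bullet(\ord_E)$ on $X$ (equivalently the filtration $\cF_E$ of $R = R(X,L)$) to be finitely generated, so that $E$ is dreamy in the sense of \cite{Fuj16}. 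Then the Rees construction applied to $\cF_E$ produces a test configuration $(\cX,\cD)$ of $(X,\Delta)$. That this test configuration is \emph{special} — i.e.\ $(\cX,\cD)\to \mathbb{A}^1$ is a $\Q$-Gorenstein family of log Fano pairs — follows because the central fiber of the degeneration of the cone is klt (being the Rees degeneration of a minimizing valuation), and klt-ness of the cone central fiber is equivalent to $(\cX_0,\cD_0)$ being klt log Fano, i.e.\ specialness.

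Once $(\cX,\cD)$ is known to be a special test configuration, the identity $\Fut(\cX,\cD) = 0$ should follow from the dictionary between the generalized Futaki invariant and the $\beta$-invariant for special test configurations (as in \cite{Fuj16, Li17}): for the special test configuration induced by a dreamy divisor $E$ one has $\Fut(\cX,\cD) = \beta_{X,\Delta}(E) / (-K_X-\Delta)^n \cdot (\text{positive constant})$, which vanishes by hypothesis. Nontriviality of $(\cX,\cD)$ holds because $\delta(X,\Delta) = 1 < \infty$ forces $E$ to be a genuine (non-trivial) divisor over $X$, so $\cF_E$ is non-trivial. Finally, the existence of a non-trivial special test configuration with $\Fut = 0$ contradicts K-stability by Definition \ref{d-kstable}.2 (applied to the dreamy divisor $E$, which has $\beta_{X,\Delta}(E) = 0$, not $> 0$), giving the last sentence.

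The main obstacle I anticipate is the first part of the second paragraph: carefully transplanting the MMP argument of \cite[Theorem 3.2]{LWX18} — which concerns minimizers of the normalized volume at a klt singularity — to deduce finite generation of $\cF_E$ and klt-ness of the central fiber from the single vanishing $\beta_{X,\Delta}(E) = 0$. One must show that critically of $\hvol$ at $v_0$ \emph{along this one ray} is enough to force the ray to lie in the minimizing set (rather than needing $v_0$ to be minimized in all directions simultaneously), and then run the extraction/MMP machinery on the cone to produce the degeneration. The bookkeeping relating the grading shift, the log discrepancy $A_{X,\Delta}(E)$, and the parameter $t$ along the ray, as well as checking that the degeneration of $\Delta$ behaves well, will require the kind of relative-cone analysis alluded to in the introduction; this is where the technical heart of the proof lies.
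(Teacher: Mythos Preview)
Your setup is right --- pass to the cone $(Z,\Gamma)$, consider the ray $\{v_t\}$, and use $\beta_{X,\Delta}(E)=0$ together with \eqref{e-betader} to get $\frac{d}{dt}\hvol(v_t)\big|_{t=0^+}=0$. But the route you then propose has a genuine gap.

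You want to conclude that the whole ray consists of minimizers of $\hvol$ and then invoke structural results about minimizers (finite generation of the associated graded ring, klt central fiber). Neither step is available. First, vanishing of the one-sided derivative along a single ray does \emph{not} force the ray to lie in the minimizing locus; the paper never claims this, and it is not what \cite[\S 3.1]{LWX18} proves. Second, the ``structural results'' you invoke --- that a minimizer has finitely generated associated graded ring with klt degeneration --- are exactly the content of the Stable Degeneration Conjecture (see \cite[7.1]{Li18}, \cite[4.1]{LLX18}), which was open in this generality when the paper was written. So this path is circular.

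What the paper actually does is more elementary and avoids both issues. From the Taylor expansion $f(t):=\hvol(v_t)=f(0)+O(t^2)$, together with the sandwich
\[
f(0)\;\le\;c_k^{\,n+1}\cdot\mult(\fa_\bullet(\ord_{E_k}))\;\le\;f(1/k)
\]
(the left inequality is Liu's bound \cite[7]{Liu18}, valid because $(X,\Delta)$ is K-semistable; the right is $c_k\le A_{Z,\Gamma}(E_k)$), one squeezes $c_k/A_{Z,\Gamma}(E_k)=1-O(1/k^2)$. Since $A_{Z,\Gamma}(E_k)=kr^{-1}+A_{X,\Delta}(E)$ grows linearly in $k$, this forces $A_{Z,\Gamma}(E_k)-c_k\to 0$. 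In particular, for $k\gg 0$ one has $A_{Z,\Gamma}(E_k)-c_k<1$, which is precisely the hypothesis of the extraction criterion Proposition~\ref{p-extract}. Once $E_k$ is extracted with $-E_k$ ample over $Z$, the algebra $\bigoplus_p \fa_p(\ord_{E_k})$ is finitely generated, and unwinding the description of $\fa_p(v_{1/k})$ in terms of $\cF_E$ gives finite generation of ${\rm Rees}(\cF_E)$. This is where dreaminess comes from --- not from any statement about minimizers.

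Your argument for specialness is also harder than necessary. Once $E$ is dreamy, the test configuration $(\cX,\cD)$ is normal and non-trivial by \cite[3.8]{Fuj17b}, and $\Fut(\cX,\cD)$ is a multiple of $A_{X,\Delta}(E)-S(E)=0$ by \cite[6.12]{Fuj16}. If $(\cX,\cD)$ were not special, \cite[1]{LX14} would produce a test configuration with strictly negative Futaki invariant, contradicting K-semistability. No analysis of the cone's central fiber is needed.
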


The proof follows an argument in \cite[Section 3.1]{LWX18}.
The argument will be used again in the proof of Lemma \ref{l-Wkextract} in a relative setting.

\begin{proof}
Fix a positive integer $r$ so that $L:=-r(K_X+\Delta)$ is a Cartier divisor and set $R=R(X,L)$. 
Consider the cone
$(Z,\Gamma)$ over $(X,\Delta)$ with respect to the polarization $L$.

The divisor $E$ over $X$ induces a ray of valuations
$$ \{v_t \, \vert \, t\in [0,\infty) \}\subset \Val_{Z,x}$$ (see Section \ref{ss-nvolkss}).
For $k\in \Z_{>0}$, there is a divisor $E_k$ over $Z$ so that  
 $v_{\frac{1}{k}}=\frac{1}{k}\ord_{E_k}$.
 By \eqref{e-betader},
\[
 \frac{d }{dt}  \hvol(v_t) \big\vert_{t=0^+}= (n+1) \beta_{X,\Delta}(E).
\]
 Since $A_{X,\Delta}(E)-S(E)=0$,  we know $\beta_{X,\Delta}(E)=0$.
Defining $f(t):=\hvol(v_t)$, a Taylor expansion gives 
 $$f(t)=f(0)+O(t^2) \mbox {\ \ \ \ \ \ \ for $0\le t \ll 1$}.$$ 

For a fixed positive integer $k$, 
set 
$$\fa_{k,\bullet}:=\fa_\bullet(\ord_{E_k}) \quad
\text{ and } \quad c_k:=\lct( Z,\Gamma; \fa_{k,\bullet}).$$
Note that $c_k\le A_{Z,\Gamma}(E_k)$ by  \eqref{e-lctA}. 
This implies
$$f(0)\le c^{n+1}_k\cdot \mult(\fa_{k,\bullet})\le f\bigg(\frac 1 k\bigg)=A_{Z,\Gamma}(E_k)^{n+1}\cdot \mult(\fa_{k,\bullet}), $$
where the first inequality follows from \cite[7]{Liu18} and the assumption that $(X,\Delta)$ is K-semistable.
Therefore,
\[
\bigg(  \frac{1}{ 1+ O(1/k^2)} \bigg)^{\frac{1}{n+1} }
\leq 
\left( \frac{ f(0)}{ f(1/k)} \right)^{\frac{1}{n+1} } \leq \frac{c_k}{A_{Z,\Gamma}(E_k)}  \leq 1 
.\]
Since  $(1+O(1/k^2))^{1/(n+1)}$ is of the order $1+ O(1/k^2)$,
we see
$$1-O\left(\frac{1}{k^2} \right)=\frac{c_k}{A_{Z,\Gamma}(E_k)}\le 1.$$ 
Using that  $A_{Z,\Gamma}(E_k)=kr^{-1} + A_{X,\Delta}( E)$,
$$
\lim_{k \to \infty} 
 \left( A_{Z, \Gamma}(E_k) -c_k \right)  
 =
 \lim_{k \to \infty} 
 \left( 
 A_{Z,\Gamma}(E_k)\left(
1-\frac{c_k}{A_{Z,\Gamma}(E_k)}
\right)  \right)= 0.
$$
Hence, we may fix $k\gg 0$ so that $A_{Z,\Gamma}(E_k)-c_k<1$. 

By  Proposition \ref{p-extract}, there exists a proper birational morphism $\mu_k\colon Z_{k}\to Z$ such that $E_k \subset Z_k$  and $-E_k$ is ample over $Z$.
Therefore, $\bigoplus_{p\geq 0 } \mu_{k*} \cO_{Z_k}(-pE_k )$ is a  finitely generated $\cO_{Z}$-algebra. 
Since  $\mu_{k*}( \cO_{Z_k}(-pE_k ))  = \fa_p(kv_{1/k})$,  the latter implies
\[
 \bigoplus_{p \in \N} \bigoplus_{m \in \N} \cF_E^{p-mk} R_m 
\]
is a
finitely generated $R$-algebra. 
Therefore, ${\rm Rees}(\cF_E)$ is finitely generated as well and $E$ is dreamy. 

Let $(\cX,\cD)$ denote the test configuration induced by $\cF_E$. 
The test configuration is normal and non-trivial  \cite[3.8]{Fuj17b}
and $\Fut(\cX,\cD)$
 is a multiple of $A_{X,\Delta}(E) - S(E)$  \cite[6.12]{Fuj16}, which is zero. 
 We conclude $(\cX,\cD)$ is special, 
 since otherwise there would exist a test configuration of $(X,\Delta)$
  with negative Futaki invariant \cite[1]{LX14}. 
\end{proof}



An immediate corollary to Theorem \ref{t-delta=1} is the following strengthening of \cite[1.6]{Fuj16} and \cite[3.7]{Li18}. The result was expected in the arXiv version of \cite{Li18}. 
\begin{cor}\label{c-alld}
A log Fano pair $(X,\Delta)$ is K-stable if and only if  $\beta_{X,\Delta}(E)>0$ for any divisor $E$ over $X$. 
\end{cor}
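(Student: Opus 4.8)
The plan is to deduce the corollary quickly from Theorem~\ref{t-delta=1}, the statement being essentially a bookkeeping argument around it. One direction is formal: if $\beta_{X,\Delta}(E)>0$ for \emph{every} divisor $E$ over $X$, then in particular this holds for every dreamy divisor, so $(X,\Delta)$ is K-stable by Definition~\ref{d-kstable}.2.

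For the converse, assume $(X,\Delta)$ is K-stable. First I would record that K-stability implies K-semistability. This is not literally part of Definition~\ref{d-kstable}, but it follows from the equivalence of these notions with the original test-configuration definitions (the sign of the generalized Futaki invariant on special test configurations), as recalled after Definition~\ref{d-kstable}. Hence $\delta(X,\Delta)\ge 1$, equivalently $\beta_{X,\Delta}(E)\ge 0$ for every divisor $E$ over $X$. It then remains to rule out equality. Suppose, for contradiction, that $\beta_{X,\Delta}(E_0)=0$ for some divisor $E_0$ over $X$. Since $S(E_0)>0$ (the integrand $\vol(\mu^*(-K_X-\Delta)-xE_0)$ defining $S(E_0)$ is continuous and equals $(-K_X-\Delta)^n>0$ at $x=0$), the vanishing $\beta_{X,\Delta}(E_0)=(-K_X-\Delta)^n\bigl(A_{X,\Delta}(E_0)-S(E_0)\bigr)=0$ gives $A_{X,\Delta}(E_0)/S(E_0)=1$. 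Combining this with $\delta(X,\Delta)=\inf_E A_{X,\Delta}(E)/S(E)\ge 1$ forces $\delta(X,\Delta)=A_{X,\Delta}(E_0)/S(E_0)=1$. Now Theorem~\ref{t-delta=1} applies verbatim to the divisor $E_0$: it asserts that $E_0$ is dreamy, induces a nontrivial special test configuration with vanishing generalized Futaki invariant, and that $(X,\Delta)$ is \emph{not} K-stable --- contradicting the hypothesis. Therefore $\beta_{X,\Delta}(E)>0$ for all divisors $E$ over $X$.

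There is no genuine obstacle; the entire weight of the proof is carried by Theorem~\ref{t-delta=1}. The only points worth a moment's care are (i) the reduction ``$(X,\Delta)$ K-stable $\Rightarrow$ $\delta(X,\Delta)\ge 1$'', which I would justify by citing the equivalence with the classical definitions (so as not to reprove that K-stability implies K-semistability), rather than attempting a direct valuative argument; and (ii) making sure that the borderline case fed into Theorem~\ref{t-delta=1} is exactly $\delta(X,\Delta)=1$ \emph{with the infimum attained by an honest divisor over $X$} --- which is precisely the configuration produced by a divisor $E_0$ with $\beta_{X,\Delta}(E_0)=0$.
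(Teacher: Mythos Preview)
Your proof is correct and follows the same route as the paper: the reverse implication is immediate from Definition~\ref{d-kstable}.2, and the forward implication reduces to Theorem~\ref{t-delta=1} by first observing that K-stability forces $\delta(X,\Delta)\ge 1$ and then ruling out a divisor with $\beta=0$. The paper's own proof is just a one-line citation of Theorem~\ref{t-delta=1} for the forward direction (and cites \cite{Fuj16,Li18} for the reverse, though as you note this is formal from the paper's chosen definition); you have simply unpacked the argument, and your care about the step ``K-stable $\Rightarrow$ K-semistable'' is well placed, since it is not literally immediate from Definition~\ref{d-kstable} and does rely on the equivalence with the test-configuration definitions recorded there.
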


\begin{proof}
Theorem \ref{t-delta=1} implies the forward implication. The reverse implication was shown in \cite[1.6]{Fuj16} and \cite[3.7]{Li18}.
\end{proof}

\subsection{Ideals computing $\delta$}\label{ss-intersection}

Let $(X,\Delta)$ be a log Fano pair and $\fa \subsetneq \cO_X$ a nonzero ideal. 
Write $\pi:Y \to X$ for the normalized blowup of $X$ along $\fa$ and $E$ for the effective Cartier divisor on $Y$ such that $\fa \cdot \cO_Y = \cO_Y(-E)$. 
We set
\[
S(\fa) : =
\frac{1}{ \vol(-K_X-\Delta)} \int_0 ^{ +\infty} \vol( \pi^*(-K_X-\Delta) - t E) \, dt.
\]

\begin{prop}\label{p-Sideal}
If $(X,\Delta)$ is a log Fano pair and $\fa \subsetneq \cO_{X}$ a nonzero ideal, then
\begin{equation}\label{e-deltaideal}
  \frac{ \lct(X,\Delta;\fa) }{S(\fa)}\geq \delta(X,\Delta)
.\end{equation}
Furthermore, write $\pi:Y\to X$ for the normalized blowup of $\fa$ and $E$ for the Cartier divisor on $Y$ such that $\fa \cdot \cO_Y = \cO_Y(-E)$. If  \eqref{e-deltaideal} is an equality, then $\Supp(E)$ is a prime divisor and computes $\delta(X,\Delta)$. 
\end{prop}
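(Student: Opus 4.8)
The first inequality is a direct consequence of Proposition~\ref{p-deltafilt} (or Theorem~\ref{t-delta}). Indeed, I would associate to $\fa$ the $\N$-filtration $\cF_{\fa}$ on $R = R(X,L)$ defined by $\cF_{\fa}^p R_m := H^0(X, \cO_X(mL)\otimes \fa^{\lceil p/r\rceil})$, or more naturally work directly with the graded sequence $\fa_\bullet = (\fa^p)_{p\ge 0}$ and the divisor $E$ on the normalized blowup $\pi\colon Y\to X$. One checks that $S(\cF_{\fa})$ (appropriately normalized) equals $S(\fa)$ as defined above, using the identity $\vol(\cF_{\fa}R^{(x)}) = \vol(\pi^*(-K_X-\Delta)-xE)$ up to the scaling by $r$ built into the conventions, and that $\lct(X,\Delta;\fb_\bullet(\cF_{\fa})) = \lct(X,\Delta;\fa)$ since $\fb_p(\cF_\fa)$ is (the integral closure of) $\fa^p$ and log canonical thresholds are insensitive to integral closure. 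Then \eqref{e-deltaideal} is exactly the inequality $\delta(X,\Delta)\le \lct(X,\Delta;\fb_\bullet(\cF))/S(\cF)$ from Proposition~\ref{p-deltafilt}.

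For the equality case, suppose $\lct(X,\Delta;\fa)/S(\fa) = \delta(X,\Delta)$. Write $E = \sum_i a_i E_i$ with the $E_i$ the prime components. The key point is that $S(\fa)$ is an "average" of the quantities $S(E_i)$ weighted by the $a_i$ and the intersection-theoretic contributions, while $\lct(X,\Delta;\fa) = \min_i A_{X,\Delta}(E_i)/a_i$. More precisely, from $\vol(\pi^*(-K_X-\Delta)-tE)$ and integrating, I would derive an identity expressing $S(\fa)\cdot\vol(-K_X-\Delta)$ as a positive combination $\sum_i c_i$ where each $c_i$ is bounded below in terms of $a_i\, S(E_i)$ — here I would either differentiate $\vol(\pi^*(-K_X-\Delta)-tE)$ and use that $-\frac{d}{dt}\vol = \mathrm{vol}$ of the restriction to the $E_i$'s appearing in the Zariski-type decomposition, or invoke a Fubini/log-concavity argument. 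Combined with $A_{X,\Delta}(E_i)\ge a_i\cdot\lct(X,\Delta;\fa)$ for every $i$, this forces
\[
\delta(X,\Delta) = \frac{\lct(X,\Delta;\fa)}{S(\fa)} \ge \frac{\min_i A_{X,\Delta}(E_i)/a_i}{\big(\sum_i (\text{weight}_i)\, a_i S(E_i)\big)/\vol} \ge \min_i \frac{A_{X,\Delta}(E_i)}{S(E_i)}\ge \delta(X,\Delta),
\]
so all inequalities are equalities. Equality in the first step forces the weighted average to be concentrated, i.e. only one $E_i$ contributes, which gives $\Supp(E)$ irreducible; and equality in the last step gives that this $E_i$ computes $\delta(X,\Delta)$.

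\textbf{Main obstacle.} The delicate part is making precise the claim that $S(\fa)$ is a convex combination of the $S(E_i)$ with \emph{positive} weights summing (after normalization) in a way compatible with the $\lct$ computation — in general $\pi^*(-K_X-\Delta)-tE$ will not be nef, so $\vol$ must be handled via its positive part, and one must verify that every prime component $E_i$ of $E$ genuinely appears with nonzero coefficient in the relevant volume derivative (otherwise one could not conclude irreducibility of $\mathrm{Supp}(E)$ — a component with zero weight could be thrown in for free). I would address this by using that $E$ is $\pi$-anti-ample (so no component of $E$ is "superfluous": $\pi^*(-K_X-\Delta)-tE$ meets each $E_i$ nontrivially for small $t>0$, hence each $E_i$ contributes to the derivative of the volume at $t=0^+$), together with the concavity of $t\mapsto \vol(\pi^*(-K_X-\Delta)-tE)^{1/n}$ to control the comparison with the single-divisor integrals $S(E_i)$.
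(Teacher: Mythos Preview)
Your approach to the inequality \eqref{e-deltaideal} via the filtration $\cF_{\fa}$ and Proposition~\ref{p-deltafilt} is fine and is a legitimate alternative to the paper's route (the paper instead picks a divisor $F$ computing $\lct(X,\Delta;\fa)$, extracts it via \cite{BCHM10}, and uses $\fa^k\cdot\cO_{X_F}\subseteq\cO_{X_F}(-kpF)$ to get $S(\fa)\le p^{-1}S(F)$, whence $\lct(\fa)/S(\fa)\ge A_{X,\Delta}(F)/S(F)\ge\delta$).

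The equality case, however, has a real gap. Your chain of inequalities rests on two claims that do not hold in the form you need. First, you write $\lct(X,\Delta;\fa)=\min_i A_{X,\Delta}(E_i)/a_i$; but the normalized blowup $Y$ is typically \emph{not} a log resolution of $(X,\Delta,\fa)$, so one only has $\lct(\fa)\le A_{X,\Delta}(E_i)/a_i$ for each $i$, and the minimum over the $E_i$ can overshoot. Second, and more seriously, the ``$S(\fa)$ is a positive combination of the $a_iS(E_i)$'' heuristic goes the wrong way: since $E-a_iE_i$ is effective one has $\vol(\pi^*L-tE)\le\vol(\pi^*L-ta_iE_i)$ for every $t$, hence $S(\fa)\le a_i^{-1}S(E_i)$ for each $i$. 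So $S(\fa)$ lies \emph{below} all of the $a_i^{-1}S(E_i)$, not between them as a convex combination, and your displayed chain (with $\min_i A(E_i)/a_i$ in the numerator replacing the smaller $\lct(\fa)$) has the inequalities pointing the wrong direction. The derivative/restricted-volume idea you sketch does give a decomposition $S(\fa)\cdot\vol=\sum_i a_i\int_0^\infty t\cdot n\langle\pi^*L-tE\rangle^{n-1}\cdot E_i\,dt$, but the $i$-th term involves $(\pi^*L-tE)^{n-1}$ rather than $(\pi^*L-tE_i)^{n-1}$, so it is not comparable to $S(E_i)$ in any useful way. Finally, even if one somehow showed $\Supp(E)$ is irreducible, say $E=a_1E_1$, you would still need $A_{X,\Delta}(E_1)=a_1\lct(\fa)$ (i.e.\ that $E_1$ itself computes the lct) to conclude that $E_1$ computes $\delta$, and nothing in your outline addresses this.

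The paper sidesteps all of this by not decomposing $E$ at all. Having chosen $F$ with $A_{X,\Delta}(F)/p=\lct(\fa)$ and extracted $\rho\colon X_F\to X$, equality in \eqref{e-deltaideal} immediately gives that $F$ computes $\delta$. To identify $Y$ with $X_F$, one writes $\fa^k\cdot\cO_{X_F}=\fc\cdot\cO_{X_F}(-pkF)$ for a residual ideal $\fc$; if $\fc\ne\cO_{X_F}$, a short intersection-theoretic computation (on the normalized blowup of $\fc$, using that a small perturbation $p\tau^*F+\varepsilon G$ is anti-ample over $X$) gives $\vol(\pi^*L-tE)<\vol(\rho^*L-tpF)$ for $0<t\ll1$, hence $S(\fa)<p^{-1}S(F)$, contradicting equality. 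Thus $\fc=\cO_{X_F}$, which forces $Y\simeq X_F$ and $\Supp(E)=F$.
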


The above proposition is an analog of \cite[Theorem 3.11]{LX16} for the $\delta$-invariant and is similar to \cite[Corollary 3.22]{Fuj17}.

\begin{proof}
Choose a divisor $F$ over $X$ computing $\lct(X,\Delta;\fa)$. 
By \cite{BCHM10}, there is an extraction $\rho: X_F \to X$ of $F$.
Set $p:= \ord_F(\fa)$. 
Hence, $A_{X,\Delta}(F) / p = \lct(X,\Delta;\fa)$ and $\fa^k \cdot \cO_{X_F} \subseteq \cO_{X_F}(-kpF)$ for all $k \in \N$. 

By the previous inclusion, if we set $L:=-K_X-\Delta$, then
\[
\vol(\pi^*L-tE) \leq \vol( \rho^*L- tpF) \]
for all $t\in\R_{\geq 0}$.
Hence, $S(\fa) \leq p^{-1} S(F)$, and we see
\[
\frac{\lct(\fa)}{S(\fa)}  \geq \frac{ A_{X,\Delta}(F) }{ S(F)} .\]
Since $A_{X,\Delta}(F) / S(F) \geq \delta(X,\Delta)$, \eqref{e-deltaideal} holds.

Now assume \eqref{e-deltaideal} is an equality. In this case, the above argument implies $F$ computes $\delta(X,\Delta)$.
To finish the proof, it suffices to show $Y=X_F$ and $\Supp(E)=F$.

Fix a positive integer $k$ so that $-kpF$ is Cartier and choose an ideal $\fc\subseteq \cO_{X_F}$ such that 
\[
\fa^k \cdot \cO_{X_F}  = \fc  \cdot  \cO_{X_F}(-pkF).\] 
Write $\tau:Z \to X_F$ for the normalized blowup of $X_F$ along $\fc$  
and $G$ for the Cartier divisor on $Z$ such that $\fc \cdot \cO_{Z} = \cO_Z(-G)$. 
Since $Z$ is normal and 
$$\fa^k \cdot \cO_Z= \left( \fc  \cdot \cO_{X_F}(-pkF) \right)\cdot \cO_Z = \cO_Z(-pk\tau^*(F)-G)$$ is locally free, 
$\rho \circ \tau $ factors through $\pi$:
\begin{center}
\begin{tikzcd}[column sep=scriptsize, row sep=scriptsize]
  &Z  \arrow[dl,swap, "\sigma"] \arrow[dr,,"\tau"]& \\
  Y  \arrow[dr,swap,  "\pi"] & &  X_F  \arrow[dl, "\rho"] \\
  & X&
   \end{tikzcd}  . \end{center}
Additionally, $\sigma^* (E) = \tau^* (pF) + k^{-1} G$.

If we can show $\fc= \cO_{X_F}$, the proof will be complete. 
Indeed, if $\fc= \cO_{X_F}$, then $\tau$ is an isomorphism and $\sigma^*E = p F$. 
But, since $\sigma^*E = p F$ is anti-ample over $X$, $\sigma$ must also be an isomorphism and we are done. 

We claim that if $\fc\subsetneq  \cO_{X_F}$, then
\[
\vol(\pi^*L-tE) < \vol( \rho^*L- tpF) 
\]
for $0 <t \ll 1$ and, thus, $S(\fa)< (1/p) S(F)$. Since, we will then have
\[
\delta(X,\Delta) \leq \frac{A_{X,\Delta}(F)}{S(F)} <  \frac{ \lct(\fa)}{S(\fa)} = \delta(X,\Delta),\]
a contradiction will be reached. 

To prove the above claim, fix $0 <\e \ll 1/k$ so that $H:=  p\tau^* F + \e G$
 is anti-ample over $X$. Note that by our choice of $\e$, we also have 
 \[   \vol( \pi^*L- tE)= 
 \vol(  \tau^* (\rho^*L)- t \sigma^*E ) \leq \vol( \tau^* ( \rho^* L)- tH). 
 \] Therefore, it  suffices to show 
 \[
 \vol( \tau^*( \rho^* L)- tH) <  \vol(  \rho^* L- tpF)
 \]
 for $0 < t \ll 1$. 
 
Fix $0 < t \ll 1$ so that both  $A_t := \rho^*L-tpF$ and $B_t:= \tau^* (\rho^*L)- t H$ are both ample. 
Following an argument in \cite[3.3]{Fuj17}, we note that for $0 \leq i \leq n-1$, 
\begin{align*}
0 & \leq  \e t G \cdot  ( \tau^*A_t )^{i} \cdot B_t^{n-i-1}   \\
   &  =   (\tau^*A_t- B_t) \cdot ( \tau^*A_t )^{i} \cdot B_t^{n-i-1}   ,
\end{align*}
since $G$ is effective, $ \tau^*A_t$ is nef, and $B_t$ is ample. Additionally, 
\[
0< (\tau^*A_t- B_t)  \cdot B_t^{n-1} .  
\]
We now see 
\begin{align*}
0 &<\sum_{i=0}^{n-1}  \left(
\left( \tau^* A_t - B_t \right)  \cdot  ( \tau^*A_t )^{i} \cdot B_t^{n-i-1}  \right)  \\
& =
(\tau^* A_t)^n -(B_t)^n  \\
& = 
\vol(  \rho^* L- tpF) - \vol( \tau^*( \rho^* L )- t H) , \end{align*}
and conclude $\vol(\rho^*L-t p F)<  \vol( \tau^*( \rho^* L )- t H)$ for  $0<t\ll1$.
\end{proof}

\subsection{$\Q$-divisors computing $\delta$}\label{ss-qdivisor}

Let $(X,\Delta)$ be a log Fano pair, $\mu:Y \to X$ a proper birational morphism with $Y$ normal, 
and $E$ an effective $\Q$-Cartier $\Q$-divisor on $Y$ such that $-E$ is $\mu$-ample. 
We set 
$\fa_p(E) := \mu_* \cO_Y(-\lceil pE \rceil ) \subseteq \cO_X$
and  
\[S(E) := \frac{1}{(-K_X-\Delta)^n} \int_0^\infty \vol(\mu^*(-K_X-\Delta)- tE)\, dt.\]

\begin{prop}\label{p-Qdivcomputing}
With the above notation, 
we have 
\begin{equation}\label{e-Excdelta}
\frac{ \lct(X,\Delta; \fa_\bullet(E) )}{S(E)} \geq \delta(X,\Delta). 
\end{equation}
Furthermore, if \eqref{e-Excdelta} is an equality, then $\Supp(E)$ is a prime divisor. 
\end{prop}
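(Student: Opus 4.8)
The plan is to deduce the proposition from Proposition \ref{p-Sideal} by trading the $\Q$-divisor $E$ for a single ideal on $X$. First I would pick $p_0 \in \Z_{>0}$ sufficiently divisible so that $p_0E$ is Cartier and, since $-E$ is $\mu$-ample, $\fa\cdot\cO_Y = \cO_Y(-p_0E)$, where $\fa := \fa_{p_0}(E) = \mu_*\cO_Y(-p_0E)$; note that $\fa$ is a nonzero proper ideal sheaf because $E$ is effective and nonzero. As $\fa\cdot\cO_Y$ is invertible, the universal property of blowing up shows $\mu$ factors through $\mathrm{Bl}_{\fa}X$, and hence — using that $Y$ is normal — through the normalized blowup $\pi\colon Y' \to X$ of $\fa$, say via $\sigma\colon Y \to Y'$. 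Let $E'$ be the effective Cartier divisor on $Y'$ with $\fa\cdot\cO_{Y'} = \cO_{Y'}(-E')$; then $-E'$ is $\pi$-ample and $\sigma^*E' = p_0E$. Since $-p_0E = -\sigma^*E'$ is $\mu$-ample, it is positive on every curve contracted by $\mu$, hence on every curve contracted by $\sigma$; but $-\sigma^*E'$ has degree zero on the latter, so $\sigma$ contracts no curve and is therefore an isomorphism (being finite and birational onto the normal variety $Y'$). Thus $\mu$ is the normalized blowup of $\fa$ and, under this identification, $E' = p_0E$.

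The rest is bookkeeping. With the identification above, the invariant of $\fa$ used in Proposition \ref{p-Sideal} equals
\[
S(\fa) = \frac{1}{(-K_X-\Delta)^n}\int_0^\infty \vol\big(\mu^*(-K_X-\Delta) - t\,p_0E\big)\,dt = \frac{1}{p_0}\,S(E)
\]
by the substitution $u = p_0 t$. Together with $\lct(X,\Delta;\fa_\bullet(E)) = \sup_m m\cdot\lct(X,\Delta;\fa_m(E)) \ge p_0\cdot\lct(X,\Delta;\fa)$ and Proposition \ref{p-Sideal}, this gives
\[
\frac{\lct(X,\Delta;\fa_\bullet(E))}{S(E)} \ \ge\ \frac{p_0\,\lct(X,\Delta;\fa)}{S(E)} \ =\ \frac{\lct(X,\Delta;\fa)}{S(\fa)} \ \ge\ \delta(X,\Delta),
\]
which is \eqref{e-Excdelta}. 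If \eqref{e-Excdelta} is an equality, then every inequality above is, so in particular $\lct(X,\Delta;\fa) = \delta(X,\Delta)\,S(\fa)$; the second assertion of Proposition \ref{p-Sideal} then says $\Supp(E')$ is a prime divisor, and since $E' = p_0E$ this means $\Supp(E)$ is a prime divisor.

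The one step that is not purely formal is the identification of $\mu$ with the normalized blowup of $\fa$: the content is that the relatively-ample hypothesis on $-E$ forces $Y$ to be minimal among the models extracting $E$. Once this is established, both halves of the proposition are immediate consequences of Proposition \ref{p-Sideal} applied to $\fa$. (Alternatively, the inequality \eqref{e-Excdelta} alone can be obtained directly from Theorem \ref{t-delta}, applied to a valuation computing $\lct(X,\Delta;\fa_\bullet(E))$, using that the filtration $\cF$ of $R(X,L)$ defined by $\cF^pR_m = H^0(Y,\mu^*(mL)-\lceil pE\rceil)$ satisfies $S(\cF)=S(E)$; but the equality case still seems to require the blowup analysis above.)
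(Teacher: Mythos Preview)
Your proof is correct and follows essentially the same route as the paper: both reduce to Proposition~\ref{p-Sideal} by identifying $\mu$ with the normalized blowup of $\fa_{p_0}(E)$ for $p_0$ sufficiently divisible, which the paper packages as Lemma~\ref{l-antiample} and you prove inline via the universal property and a relative-ampleness argument. The only minor difference is that the paper uses part (3) of that lemma to obtain the \emph{equality} $\lct(X,\Delta;\fa_\bullet(E)) = p_0\cdot\lct(X,\Delta;\fa)$, whereas you make do with the inequality $\lct(X,\Delta;\fa_\bullet(E)) \geq p_0\cdot\lct(X,\Delta;\fa)$, which indeed suffices for both the inequality \eqref{e-Excdelta} and the equality case.
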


The statement is a consequence Proposition \ref{p-Sideal} and the following elementary lemma.

\begin{lem}\label{l-antiample}
Let $\mu:Y \to X$ be a proper birational morphism of normal varieties and $E$ an effective $\Q$-Cartier $\Q$-divisor on $Y$ such that $-E$ is $\mu$-ample. 
Set 
$$\fa_{p}(E) : = \mu_* \cO_{Y}( -\lceil pE\rceil) \subseteq \cO_X.$$ 
If $p\in \Z_{>0}$ is sufficiently divisible, then
\begin{enumerate}
\item $Y$ is the blowup of $X$ along $\fa_{p}(E)$,
\item $\fa_{p}(E) \cdot \cO_Y = \cO_Y(-pE)$, and
\item  $ \left( \fa_{p}(E)\right)^\ell  = \fa_{p\ell }(E)$ for all $\ell \in \Z_{>0}$.
\end{enumerate}
\end{lem}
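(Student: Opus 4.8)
\textbf{Proof plan for Lemma \ref{l-antiample}.}

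The plan is to reduce everything to the case where $E$ is an integral Cartier divisor by clearing denominators, and then invoke the standard theory of blowups along $\mu$-ample divisors. First I would choose $p_0 \in \Z_{>0}$ divisible enough that $p_0 E$ is an integral Cartier divisor on $Y$ with $-p_0 E$ relatively ample over $X$, and such that $\cO_Y(-p_0 E)$ is $\mu$-globally generated relative to $X$ (possible since $-E$ is $\mu$-ample), and moreover such that the relative section algebra $\bigoplus_{\ell \ge 0} \mu_* \cO_Y(-\ell p_0 E)$ is generated in degree one over $\cO_X$ (again a consequence of relative ampleness, after possibly enlarging $p_0$). For such $p_0$, the divisor $-p_0 E$ being $\mu$-ample and the algebra being generated in degree one shows $Y \cong \Proj_X \bigoplus_\ell \mu_*\cO_Y(-\ell p_0 E)$; since this graded algebra is the Rees-type algebra of the ideal $\fa := \mu_*\cO_Y(-p_0 E)$ once we know $\mu_* \cO_Y(-\ell p_0 E) = \fa^\ell$, the claim will follow. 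So the core of the argument is really the identity in part (3).

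For part (3), the inclusion $(\fa_p(E))^\ell \subseteq \fa_{p\ell}(E)$ is automatic from the definition of the ideals together with $\lceil p\ell E \rceil \le \ell \lceil p E\rceil$ (valid once $pE$ is integral, so that actually equality holds). The reverse inclusion $\fa_{p\ell}(E) \subseteq (\fa_p(E))^\ell$ is where relative ampleness enters: it is equivalent to the surjectivity, for $p$ sufficiently divisible, of the multiplication maps
\[
\mu_* \cO_Y(-pE)^{\otimes \ell} \longrightarrow \mu_*\cO_Y(-p\ell E),
\]
i.e. to the relative section algebra of the $\mu$-ample line bundle $\cO_Y(-pE)$ being generated in degree one over $\cO_X$. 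This is a standard consequence of relative ampleness (e.g.\ \cite[II.2.1.6]{EGA} type statements, or by passing to an affine cover of $X$ and using that the section algebra of an ample line bundle on a projective scheme is eventually generated in degree one, after Veronese), so we simply replace $p_0$ by a sufficiently divisible multiple $p$. Once (3) holds, part (2) follows: $\fa_p(E)\cdot \cO_Y$ is the relative base ideal of the $\mu$-globally generated sheaf $\cO_Y(-pE)$, hence equals $\cO_Y(-pE)$ for our choice of $p$. Finally part (1) follows because, with (2) and (3) in hand, the blowup $\mathrm{Bl}_{\fa_p(E)}X = \Proj_X \bigoplus_\ell \fa_p(E)^\ell = \Proj_X \bigoplus_\ell \mu_*\cO_Y(-\ell p E)$ is canonically identified with $Y$ via the $\mu$-ample divisor $-pE$, by the universal property of blowing up together with $\mu$-ampleness forcing the natural map $Y \to \mathrm{Bl}_{\fa_p(E)}X$ to be an isomorphism.

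The main obstacle I expect is making the "sufficiently divisible $p$" uniform and clean: one wants a single $p$ (or rather all sufficiently divisible $p$) that simultaneously makes $pE$ integral and Cartier, makes $\cO_Y(-pE)$ relatively globally generated, and makes the degree-one generation (equivalently the multiplication surjectivity) hold for all $\ell$. The honest way to handle this is to first fix $p_0$ so $p_0 E$ is Cartier and $-p_0 E$ is $\mu$-ample, work on an affine open cover $\{U_i\}$ of $X$ with $Y_i := \mu^{-1}(U_i)$ projective over the affine $U_i$, apply the classical statement that for an ample invertible sheaf on a projective scheme over an affine base there is $N_i$ with $\bigoplus_\ell H^0(Y_i, \cO(-\ell p_0 N_i E))$ generated in degree one, and take $p$ to be $p_0$ times a common multiple of the $N_i$ and of the $p_0$-torsion bound; then all three properties hold for $p$ and any multiple of it. The rest is bookkeeping. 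I would present this reduction briefly and cite the standard relative ampleness facts rather than reproving them.
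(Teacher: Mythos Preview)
Your proposal is correct and follows essentially the same approach as the paper: both arguments rest on the fact that $-E$ being $\mu$-ample gives $Y \simeq \Proj_X \bigoplus_m \fa_m(E)$ with this graded $\cO_X$-algebra finitely generated, so that a sufficiently divisible Veronese is generated in degree one (yielding (1) and (3)), while (2) is the relative global generation of $\cO_Y(-pE)$. The only difference is ordering and level of detail---the paper invokes the Veronese-in-degree-one fact for finitely generated graded algebras directly, whereas you spell out the multiplication-map surjectivity and the affine-cover bookkeeping; both are fine.
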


\begin{proof}
Since $-E$ is ample over $X$, 
$\bigoplus_{m \in \N} \fa_m(E)$ is a finitely generated $\cO_X$-algebra and 
$
Y \simeq  \Proj_X \big( \bigoplus_{m \in \N} \fa_m(E)\big).
$
The former statement implies that if $p\in \Z_{>0}$ is sufficiently divisible, 
then the $p$-th Veronese,
$\bigoplus_{m \in \N} \fa_{pm}(E)$, is finitely generated in degree 1. 
Hence, (1) and (3) are complete.
For (2), observe that the natural map $\mu^* \mu_* \cO_X(- pE ) \to \cO_{Y}(-pE)$ is surjective for $p\in \Z_{>0}$ sufficiently divisible, since $-E$ is $\mu$-ample.
\end{proof}

\begin{proof}[Proof of Proposition \ref{p-Qdivcomputing}]
Fix $p\in \Z_{>0}$ satisfying (1)-(3) of Lemma \ref{l-antiample} and set $\fa:= \fa_p(E)$. 
By (1) and (2),   $p \cdot S(\fa )=  S(E)$.
By (3)
\[
\lct(X,\Delta;\fa_\bullet(E)) := \lim_{m \to \infty}  \left( mp \cdot  \lct(X,\Delta; \fa_{pm}(E) )  \right) = p \cdot \lct(X,\Delta;\fa).
\] 
The result now follows immediately from Proposition \ref{p-Sideal}. 
\end{proof}

\section{Constructing the S-equivalence}\label{s-kes}

In this section, we prove Theorem \ref{t-main}. In Section \ref{ss-filt} we will construct filtrations of 
$$R=\bigoplus_{m\in \N} H^0(X_0, -mr(K_{X_0}+\Delta_0))\qquad\mbox{and} \qquad R'=\bigoplus_{m\in \N }H^0(X'_0, -mr(K_{X'_0}+\Delta'_0)),$$
whose associated graded rings are isomorphic.
Then in Section \ref{ss-twodeg}, we concentrate on proving that these filtrations and their associated graded rings are finitely generated.

\subsection{Filtrations induced by degenerations} \label{ss-filt}

Let 
\[
\pi: (X,\Delta) \to C
\quad
\text{ and }
\pi': (X',\Delta')\to C\]
 be $\Q$-Gorenstein families of $n$-dimensional log Fano pairs over a smooth pointed curve $0 \in C$.  
Assume there exists an isomorphism 
\[
\phi: (X,\Delta) \times_C C^\circ  \to (X',\Delta') \times_C C^\circ
\] 
over $C^\circ: = C\setminus 0$ that does not extend to an isomorphism $(X,\Delta) \simeq (X',\Delta')$ over $C$. 
Furthermore, assume $C$ is affine and there exists $t \in \cO(C)$ so that ${\rm div}_{C}(t) = 0 $. 

 From this setup, we will construct filtrations on the section rings of the special fibers. 
Set 
$L := -r(K_X+\Delta)$ and $L':=-r(K_{X'}+\Delta')$,
where $r$ is a positive integer so that $L$ and $L'$ are Cartier. 
For each non-negative integer $m$, set
\begin{align*}
\cR_m &:=H^0(X, \cO_X(mL))   
 & \cR'_{m} &:=  H^0(X', \cO_{X'}(mL') )\\
R_m &:= H^0(X_0,  \cO_{X_0}(mL_0) )
& R'_m&:= H^0(X'_0, \cO_{X'_0}(mL'_0) ). 
\end{align*} 
Additionally, set  
$$\cR:= \displaystyle  \oplus_{ m} \cR_m, 
\quad 
R:= \displaystyle  \oplus_{ m} R_m,
\quad
\cR':= \displaystyle  \oplus_{ m} \cR'_m, 
\quad \text{ and } \quad 
R': = \displaystyle \oplus_{ m} R'_m.$$

Observe that the natural maps 
$$\cR_m \otimes k(0) \to R_m \ \ \ \mbox{ and }\ \ \ \cR'_m \otimes k(0)\to R'_m$$ 
are isomorphisms.
Indeed, Kawamata-Viehweg applied to the fibers of $\pi$ and $\pi'$ implies
 $R^{i}\pi_* \cO_{X}(mL)$ and $R^{i}\pi'_* \cO_{X'}(mL')$  vanish for all $i >0$ and $m \geq 0$. 
 Hence,  $\pi_* \cO_{X}(mL)$ and $\pi'_* \cO_{X'}(mL')$ are vector bundles and their cohomology commutes with base change.
 Since $C$ is affine, $\cR_m$ and $\cR'_m$ can be identified with the $\cO_C$-module $\pi_* \cO_{X}(mL)$ and $\pi'_* \cO_{X'}(mL')$, and the statement follows.

Fix a common log resolution $\widehat{X}$ of $(X,\Delta)$ and $(X',\Delta')$ 
\begin{center}
\begin{tikzcd}[column sep=scriptsize, row sep=scriptsize]
  & \widehat{X}  \arrow[dr, "\psi'"] \arrow[dl,swap,"\psi"]& \\
  X  \arrow[rr, dashrightarrow, "\phi"] & & X'
   \end{tikzcd} 
   \end{center}
   and write $\widetilde{X}_0$ and $\widetilde{X}'_0$ for the birational transforms of $X_0$ and $X'_0$ on $\widehat{X}$. 
 Set 
 \begin{eqnarray}\label{e-defa}
a: = A_{X,\Delta +X_0}(\widetilde{X}'_0) \mbox{\ \ \ and \ \ \ }a' := A_{X',\Delta'+ X'_0 }(\widetilde{X}_0).
\end{eqnarray}
Observe that  $\widetilde{X}_0\neq \widetilde{X}'_0$, since otherwise $\phi$ would extend to an isomorphism over $C$ by the second paragraph of the proof of Proposition \ref{p-filling}.

\subsubsection{Definition of filtrations} 
For each $p\in \Z$ and $m\in \N$, set
  \[
  \cF^p \cR_m:= \{ s\in  \cR_m \, \vert \, \ord_{\widetilde{X}'_0} (s) \geq p \}, 
  \quad \text{ and } \quad 
    \cF'^p \cR'_m := \{ s\in \cR'_m \, \vert \, \ord_{\widetilde{X}_0} (s) \geq p \}
    .\]
We define $\N$-filtrations of $R$ and $R'$ by setting 
\[
\cF^{p} R_m : = \im( \cF^p \cR_m \to R_m) 
\quad
\text{ and }
\quad
\cF'^{p} R'_m : = \im( \cF'^p \cR'_m  \to R'_m),
\]
where the previous maps are given by restriction of sections.  It is straightforward to check that $\cF$ and $\cF'$ are filtrations of $R$ and $R'$. 

Observe that $R_m \simeq \cR_m / t \cR_m$ and  $\cF^p R_m \simeq \im ( \cF^p \cR_m \to \cR_m / t \cR_m) \simeq \frac{ \cF^p \cR_m }{ \cF^p \cR_m \cap t \cR_m}$ and similar statement holds for $\cF'$. 
Therefore, we have natural isomorphisms
\begin{align}\label{e-grformula}
\gr_{\cF}^p R_m \simeq \frac{ \cF^p \cR_m}{  (\cF^p \cR_m \cap t \cR_m) + \cF^{p+1} \cR_m} \quad\,
\gr_{\cF'}^p R'_m \simeq \frac{ \cF'^p \cR'_m}{  (\cF'^p \cR'_m \cap t \cR'_m) + \cF'^{p+1} \cR'_m}. 
\end{align}

\subsubsection{Relating the filtrations}
We aim to show that $\gr_{\cF} R$ and $\gr_{\cF'} R'$ are isomorphic up to a grading shrift.

 Since $\psi^*(X_0) ={ \psi'}^*(X'_0)$ have multiplicity one along $\widetilde{X}_0$ and $\widetilde{X}'_0$, we may write 
\[
K_{ \widehat{X}} + \psi_{*}^{-1}(\Delta)  = \psi^*(K_X+\Delta)  + a \widetilde{X}'_0 + P 
\]
and 
\[
K_{ \widehat{X}} + {\psi'_{*}}^{-1}(\Delta')  = \psi'^*(K_{X'}+\Delta')  + a' \widetilde{X}_0 + P',
\]
where the components of $\Supp(P) \cup \Supp(P')$ are both $\psi$ and $\psi'$-exceptional. 
Now,
\begin{align*}
\cF^p \cR_m& \simeq H^0\Big(
\widehat{X},\cO_{\widehat{X}} \big(m \psi^* L -p \widetilde{X}'_0 \big)\Big)  \\
	&= H^0 \left(
	\widehat{X}, \cO_{\widehat{X}} \big(m \psi'^*L' +(mra- p )\widetilde{X}'_0- mra' \widetilde{X}_0+mr(P-P' ) \big) 
	\right). 
\end{align*}
Hence, for $s\in \cF^p  \cR_m$, multiplying $\psi^*s$ by $t^{mra -p}$ gives an element of 
\[
H^0 \Big(\widehat{X},\cO_{\widehat{X}}\big(m\psi'^*L'  -( mr(a+a')- p) \widetilde{X}_0\big) \Big),\]
which can be identified with  $\cF'^{mr(a+a')-p}\cR'_m$. 

As described above, for each $p\in \Z$ and $m\in \N$, there is a map
 \[\tilde{\varphi}_{p,m}:
\cF^{p}\cR_m \longrightarrow 
\cF'^{mr(a+a')-p} \cR'_m
,\]
which, when  $\cR_m$ and $\cR'_m$ are viewed as submodules of $K(X)$ and $K(X')$,  sends $s \mapsto t^{mra-p}({\phi^{-1}})^*(s)$. 
 Similarly, there is a map 
 \[\tilde{\varphi}'_{p,m}:
\cF'^{p} \cR'_m \longrightarrow 
{\cF}^{mr(a+a')-p} \cR_m
,\] which
sends  $s'  \mapsto t^{mra'-p}\phi^*(s')$.
Observe that 
 $\tilde{\varphi}'_{mr(a+a')-p,m}\circ \tilde{\varphi}_{p,m} $ is the identity map, since the composition corresponds to multiplication by $t^{mra'-(mr(a+a')-p) }t^{mra-p}=1$. 
 Hence, $ \tilde{\varphi}_{p,m}$ is an isomorphism.

\begin{lem}\label{l-Fcompare}
For each $p\in \Z$ and $m \in \N$, 
\begin{itemize}
\item[(1)]   $\tilde{\varphi}_{p,m} (\cF^p \cR_m   \cap t\cR_m)= \cF'^{mr(a+a')-p+1} \cR'_m$;
\item[(2)]   $ \tilde{\varphi}_{p,m} ( \cF^{p+1} \cR_m) = \cF'^{mr(a+a') -p } \cR'_m \cap  t \cR'_m$.
\end{itemize}
\end{lem}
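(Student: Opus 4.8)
The plan is to reduce both identities to one computation of divisorial conditions on the common resolution $\widehat{X}$. The input is the fact already noted above that $\psi^*(X_0) = \psi'^*(X'_0)$ has coefficient exactly one along each of $\widetilde{X}_0$ and $\widetilde{X}'_0$ — indeed $\widetilde{X}_0$ and $\widetilde{X}'_0$ occur with multiplicity one in $\psi^*(X_0)$ and $\psi'^*(X'_0)$ as the respective birational transforms. First I would pull everything back to $\widehat{X}$: by the identifications recorded above, $\cR_m = H^0(\widehat{X},m\psi^*L)$ and $\cF^p\cR_m = H^0(\widehat{X},\cO_{\widehat{X}}(m\psi^*L-p\widetilde{X}'_0))$, while, writing $t$ also for $(\pi\circ\psi)^*(t)$ so that ${\rm div}_{\widehat{X}}(t) = \psi^*(X_0)$, the subspace $t\cR_m$ of rational sections of $m\psi^*L$ is $H^0(\widehat{X},\cO_{\widehat{X}}(m\psi^*L-\psi^*(X_0)))$; the same holds on the primed side via $\psi'$.

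The crux is the identity
\[
\cF^p\cR_m\cap t\cR_m \;=\; t\cdot\cF^{p-1}\cR_m \qquad (p\in\Z,\ m\in\N),
\]
together with its primed analogue $\cF'^q\cR'_m\cap t\cR'_m = t\cdot\cF'^{q-1}\cR'_m$. To prove it, intersect the subsheaves $\cO_{\widehat{X}}(m\psi^*L-p\widetilde{X}'_0)$ and $\cO_{\widehat{X}}(m\psi^*L-\psi^*(X_0))$ of $\cO_{\widehat{X}}(m\psi^*L)$: the result is $\cO_{\widehat{X}}(m\psi^*L-D)$ with $D$ the coefficient-wise maximum of $p\widetilde{X}'_0$ and $\psi^*(X_0)$. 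Since $\psi^*(X_0)$ has coefficient $1$ along $\widetilde{X}'_0$, this maximum equals $\psi^*(X_0)+(p-1)\widetilde{X}'_0$, and dividing by $t$ identifies the corresponding $H^0$ with $t\cdot H^0(\widehat{X},\cO_{\widehat{X}}(m\psi^*L-(p-1)\widetilde{X}'_0)) = t\cdot\cF^{p-1}\cR_m$; for $p\le 0$ both sides are simply $t\cR_m$ and the same bookkeeping applies since then $\cF^{p-1}\cR_m = \cR_m$.

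Finally I would combine this with the explicit formula $\tilde{\varphi}_{p,m}(s) = t^{mra-p}(\phi^{-1})^*(s)$ and the fact that $\phi$ is a morphism over $C$, so $(\phi^{-1})^*(t) = t$. For part (1), by the crux identity each element of $\cF^p\cR_m\cap t\cR_m$ equals $ts$ with $s\in\cF^{p-1}\cR_m$, and $\tilde{\varphi}_{p,m}(ts) = t^{mra-p+1}(\phi^{-1})^*(s) = \tilde{\varphi}_{p-1,m}(s)$; hence
\[
\tilde{\varphi}_{p,m}(\cF^p\cR_m\cap t\cR_m) = \tilde{\varphi}_{p-1,m}(\cF^{p-1}\cR_m) = \cF'^{mr(a+a')-p+1}\cR'_m,
\]
the last step because $\tilde{\varphi}_{p-1,m}$ is an isomorphism onto that space. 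For part (2), for $s\in\cF^{p+1}\cR_m$ we have $\tilde{\varphi}_{p,m}(s) = t\cdot\tilde{\varphi}_{p+1,m}(s)$, so $\tilde{\varphi}_{p,m}(\cF^{p+1}\cR_m) = t\cdot\cF'^{mr(a+a')-p-1}\cR'_m$, which by the primed crux identity (with $q = mr(a+a')-p$) equals $\cF'^{mr(a+a')-p}\cR'_m\cap t\cR'_m$.

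The one delicate point is the crux identity: it is exactly the multiplicity-one statement for $\psi^*(X_0)$ along $\widetilde{X}'_0$ (and for $\psi'^*(X'_0)$ along $\widetilde{X}_0$) that makes the intersection come out as $t\cF^{p-1}\cR_m$ rather than as $t\cF^{p-c}\cR_m$ for some larger $c$. Everything else is formal manipulation of the divisorial conditions and of the formula for $\tilde{\varphi}_{p,m}$; combined with \eqref{e-grformula}, the lemma then yields the grading-shifted isomorphism of the associated graded rings of $\cF$ and $\cF'$.
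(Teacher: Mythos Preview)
Your proof is correct and uses the same underlying idea as the paper's: both arguments pull back to $\widehat X$ and exploit that $\psi^*(X_0)=\psi'^*(X'_0)$ has coefficient exactly one along each of $\widetilde X_0$ and $\widetilde X'_0$. The packaging differs slightly. The paper argues directly that for $s\in\cF^p\cR_m$ the condition $s\in t\cR_m$ is equivalent to $\ord_{\widetilde X_0}(\psi^*s)\ge 1$, and then reads off how that extra order of vanishing transforms under $s\mapsto t^{mra-p}(\phi^{-1})^*s$ to obtain membership in $\cF'^{mr(a+a')-p+1}\cR'_m$; part~(2) is handled symmetrically. You instead isolate the ``crux identity'' $\cF^p\cR_m\cap t\cR_m=t\cdot\cF^{p-1}\cR_m$ and combine it with the shift formula $\tilde\varphi_{p,m}(t\,\cdot)=\tilde\varphi_{p-1,m}(\cdot)$, which reduces both parts to the already-known fact that $\tilde\varphi_{q,m}$ is an isomorphism onto $\cF'^{mr(a+a')-q}\cR'_m$.

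Your route has the advantage that the multiplicity-one input is used once, cleanly, in the crux identity, and the rest is formal; the paper's direct computation is shorter but leaves that structural point implicit. Either way the content is the same.
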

\begin{proof}
To see (1), fix $ s\in \cF^p \cR_m$. Now, $s\in t \cR_m$ if and only if $s$ vanishes along $X_0$, which
is equivalent to the condition that
$$
\psi^*s \in H^0\Big(\widehat{X}, \cO_{\widehat{X}} \big( m \psi^* L -p \widetilde{X}'_0 -  \widetilde{X}_0 \big) \Big)
.$$
 Since the latter holds precisely when
  \[t^{mra-p}\psi^*s
  \in H^0\Big(
  \widehat{X}, \cO_{\widehat{X}} \big(m \psi'^* L' - (mr(a+a')-p +1) \widetilde{X}_0 \big)\Big),\]
 which is identified with $\cF'^{mr(a+a')-p +1 }\cR'_m$, (1) holds.  (2) follows  from a similar argument.
 \end{proof}

 \begin{prop}\label{p-isomgrF}
The collection of maps $(\tilde{\varphi}_{p,m})$ induce an isomorphism of graded rings 
 \[
\varphi: \bigoplus_{ m\in \N  }  \bigoplus_{p\in \mathbb{Z} }{\rm gr}_\cF^p R_m 
\to
 \bigoplus_{m \in \N } \bigoplus_{p \in \mathbb{Z}} {\rm gr}_{\cF'}^{mr(a+a')-p}R'_m . \]
Hence,
 $ {\rm gr}_\cF^p R_m$ and ${\rm gr}_{\cF'}^p R'_m$ vanish for $p>mr(a+a')$. 
 \end{prop}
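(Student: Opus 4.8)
### Proof proposal for Proposition \ref{p-isomgrF}

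The plan is to assemble the previously constructed isomorphisms $\tilde\varphi_{p,m}$ of $\cF$-pieces of $\cR_m$ into an isomorphism on the level of associated graded pieces of $R_m$ and $R_m'$, and then check multiplicativity. First I would fix $m \in \N$ and $p \in \Z$ and use the explicit descriptions of the graded pieces provided in \eqref{e-grformula}, namely
\[
\gr_{\cF}^p R_m \simeq \frac{ \cF^p \cR_m}{  (\cF^p \cR_m \cap t \cR_m) + \cF^{p+1} \cR_m}, \qquad \gr_{\cF'}^p R'_m \simeq \frac{ \cF'^p \cR'_m}{  (\cF'^p \cR'_m \cap t \cR'_m) + \cF'^{p+1} \cR'_m}.
\]
Since $\tilde\varphi_{p,m}\colon \cF^p\cR_m \to \cF'^{mr(a+a')-p}\cR'_m$ is an isomorphism of $\mathbbm{k}$-vector spaces, Lemma \ref{l-Fcompare} tells us exactly that it carries the submodule $(\cF^p\cR_m\cap t\cR_m)+\cF^{p+1}\cR_m$ onto $\cF'^{mr(a+a')-p+1}\cR'_m + \bigl(\cF'^{mr(a+a')-p}\cR'_m \cap t\cR'_m\bigr)$, which is precisely the submodule appearing in the denominator of $\gr_{\cF'}^{mr(a+a')-p}R'_m$. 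Therefore $\tilde\varphi_{p,m}$ descends to an isomorphism
\[
\varphi_{p,m}\colon \gr_{\cF}^p R_m \overset{\sim}{\longrightarrow} \gr_{\cF'}^{mr(a+a')-p} R'_m,
\]
and taking the direct sum over all $m$ and $p$ gives the claimed bijective graded-module map $\varphi$.

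Next I would verify that $\varphi$ is a ring homomorphism. This reduces to checking, for $s \in \cF^p\cR_m$ and $s'\in \cF^q\cR_{m'}$, that the product of the classes of $\tilde\varphi_{p,m}(s)$ and $\tilde\varphi_{q,m'}(s')$ in $\gr_{\cF'}$ equals $\tilde\varphi_{p+q,m+m'}(ss')$ up to the denominator. Working inside $K(X')$, one has $\tilde\varphi_{p,m}(s)\cdot\tilde\varphi_{q,m'}(s') = t^{mra-p}t^{m'ra-q}(\phi^{-1})^*(s)(\phi^{-1})^*(s') = t^{(m+m')ra - (p+q)}(\phi^{-1})^*(ss') = \tilde\varphi_{p+q,m+m'}(ss')$, because $(\phi^{-1})^*$ is multiplicative and the $t$-exponents add correctly. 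Since the target filtration degree also matches — $mr(a+a')-p + m'r(a+a')-q = (m+m')r(a+a')-(p+q)$ — multiplicativity of $\varphi$ follows. Hence $\varphi$ is an isomorphism of bigraded rings.

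Finally, the vanishing assertion: since $R_m = \cF^0 R_m$ and $\cF^{p}R_m = 0$ for $p \gg 0$, the graded pieces $\gr_{\cF}^p R_m$ are supported in a finite range $0 \le p$; but more to the point, $\gr_{\cF'}^{q}R'_m$ is supported in $q \ge 0$ because $\cF'$ is an $\N$-filtration with $\cF'^0 R'_m = R'_m$. Under the isomorphism $\varphi_{p,m}$, the piece $\gr_{\cF}^p R_m$ maps to $\gr_{\cF'}^{mr(a+a')-p}R'_m$, which is zero whenever $mr(a+a')-p < 0$, i.e. $p > mr(a+a')$; hence $\gr_{\cF}^p R_m = 0$ for such $p$. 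Symmetrically (running the argument with the roles of the two families exchanged, using $\tilde\varphi'$ in place of $\tilde\varphi$), $\gr_{\cF'}^p R'_m = 0$ for $p > mr(a+a')$ as well. I expect the only mildly delicate point to be bookkeeping the identification of the denominator submodule in $\gr_{\cF'}^{mr(a+a')-p}R'_m$ with the image under $\tilde\varphi_{p,m}$ of the denominator in $\gr_{\cF}^p R_m$ — this is exactly what Lemma \ref{l-Fcompare} was set up to provide, so no genuine obstacle remains.
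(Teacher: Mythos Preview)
Your proposal is correct and follows essentially the same approach as the paper: use \eqref{e-grformula} together with Lemma \ref{l-Fcompare} to see that $\tilde\varphi_{p,m}$ carries the denominator of $\gr_{\cF}^p R_m$ onto that of $\gr_{\cF'}^{mr(a+a')-p}R'_m$, check multiplicativity via the explicit formula $\tilde\varphi_{p,m}(s)\tilde\varphi_{q,m'}(s')=\tilde\varphi_{p+q,m+m'}(ss')$, and deduce the vanishing from the fact that $\gr_{\cF}^p R_m$ and $\gr_{\cF'}^p R'_m$ vanish for $p<0$. The paper's write-up is terser but the logical steps are identical.
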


   \begin{proof}
For fixed $p\in \Z$ and $m \in \N$, consider the natural maps
   \begin{align*}
\varrho: \cF^p \cR_m \to \gr_{\cF}^p R_m   \quad  { \text{ and } }  \quad
\varrho':\cF'^{mr(a+a')-p}  \cR'_m \to \gr_{\cF'}^{mr(a+a')-p} R'_m 
   .\end{align*}
 By    \eqref{e-grformula} 
and  Lemma \ref{l-Fcompare}, $\tilde{\varphi}_{p,m}$ 
sends the kernel of $ \varrho$ 
to the kernel $\varrho'$. 
Hence, $\tilde{\varphi}_{p,m}$ induces an isomorphism 
$ \varphi_{p,m}: \gr_{\cF}^p R_m \to \gr_{\cF'}^{mr(a+a')-p} R'_m$.

Write $\varphi: \gr_{\cF} \cR \to \gr_{\cF'} R'_m$ for the induced module isomorphism on the direct sums. 
Since
$$\tilde{\varphi}_{p,m}( \tilde{s}_1 )  \tilde{\varphi}_{q,\ell }( \tilde{s}_2)  = \tilde{\varphi}_{p+q,m+\ell}(\tilde{s}_1  \cdot \tilde{s}_2) $$
for  any $\tilde{s}_1\in \cF^{p}\cR_{m}$ and $\tilde{s}_2 \in \cF^{q} \cR_{\ell}$, 
we see  $\varphi$ is a ring  isomorphism.
Since $ {\rm gr}_\cF^p R_m$ and ${\rm gr}_{\cF'}^pR'_m$ vanish when $p<0$,  the isomorphism $\varphi$ implies the vanishing  when $p>mr(a+a')$. 
 \end{proof}


\subsubsection{Properties of the filtrations}

\begin{lem}\label{l-filtideals}
For each positive integer $p$,
\[
\fb_{p}(\cF ) = \fa_p(\ord_{\widetilde{X}'_0}) \cdot \cO_{X_0}
\quad \text{ and } \quad 
 \fb_{p}(\cF') = \fa_p(\ord_{\widetilde{X}_0}) \cdot \cO_{X'_0}.\]
\end{lem}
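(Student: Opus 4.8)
The plan is to prove the first equality $\fb_p(\cF) = \fa_p(\ord_{\widetilde{X}'_0})\cdot\cO_{X_0}$ and then obtain the second by the symmetric argument, interchanging the roles of $X,X',\widetilde{X}_0,\widetilde{X}'_0,\cF,\cF'$. Fix $p\in\Z_{>0}$ and set $\fa:=\fa_p(\ord_{\widetilde{X}'_0})$. First I would reinterpret everything on the total space $X$. Since $X$ is normal and $\psi:\widehat{X}\to X$ is a proper birational model, we have $\fa = \psi_*\cO_{\widehat{X}}(-p\widetilde{X}'_0)$, and the projection formula gives
\[
\cF^p\cR_m = H^0\big(\widehat{X},\cO_{\widehat{X}}(m\psi^*L - p\widetilde{X}'_0)\big) = H^0\big(X,\cO_X(mL)\otimes\fa\big).
\]
In particular every element of $\cF^p R_m$, being the restriction to $X_0$ of a section of $\cO_X(mL)\otimes\fa$, lies in $H^0(X_0,\cO_{X_0}(mL_0)\otimes(\fa\cdot\cO_{X_0}))$, which already gives the inclusion $\fb_{p,m}\subseteq\fa\cdot\cO_{X_0}$ for every $m$.

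For the reverse inclusion I would pass from $X$ to the fiber $X_0$ using positivity. Because $C$ is affine and $\pi$ is projective with $L$ relatively ample, $L$ is ample on $X$; hence for $m\gg 1$ (allowed to depend on $p$) the sheaf $\cG:=\cO_X(mL)\otimes\fa$ is globally generated and the restriction map $H^0(X,\cG)\to H^0(X_0,\cG\otimes\cO_{X_0})$ is surjective, by Serre vanishing applied to $\cG\otimes\cO_X(-X_0)$. Global generation of $\cG$ on $X$ then restricts to global generation of $\cG\otimes\cO_{X_0}$ on $X_0$ by sections coming from $H^0(X,\cG)=\cF^p\cR_m$, and composing $\cG\otimes\cO_{X_0}\to\cO_X(mL)|_{X_0}=\cO_{X_0}(mL_0)$ shows that the subsheaf of $\cO_{X_0}(mL_0)$ generated by the germs of $\cF^p R_m$ equals the image of $\cG\otimes\cO_{X_0}$ there, namely $\cO_{X_0}(mL_0)\otimes(\fa\cdot\cO_{X_0})$. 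Twisting down by $mL_0$ yields $\fb_{p,m}=\fa\cdot\cO_{X_0}$; since this holds for all $m\gg1$, the definition of $\fb_p(\cF)$ gives the claim. (In passing one should note that $\cF$ is nontrivial — so that $\fb_\bullet(\cF)$ is a well-defined graded sequence by \cite[3.17-3.18]{BJ17} — which is clear since $\widetilde{X}'_0\neq\widetilde{X}_0$ forces the center of $\ord_{\widetilde{X}'_0}$ on $X$ to be a proper subset of $X_0$, so $\fa_1(\ord_{\widetilde{X}'_0})\cdot\cO_{X_0}$ is a proper nonzero ideal.)

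The only genuinely delicate point, and the one I would be most careful about, is the bookkeeping that distinguishes the tensor product $\fa\otimes_{\cO_X}\cO_{X_0}$, which may carry torsion, from the honest ideal $\fa\cdot\cO_{X_0}=\im(\fa\otimes_{\cO_X}\cO_{X_0}\to\cO_{X_0})$: this torsion is harmless because $\cF^p R_m$ is only ever compared inside the line bundle $\cO_{X_0}(mL_0)$, so passing to images automatically kills it. The remaining ingredients — the projection-formula identification, Serre vanishing on the affine-over-$C$ total space, and global generation for $m\gg1$ — are routine, and the statement for $\cF'$ is literally the same argument with primes interchanged.
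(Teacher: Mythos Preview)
Your proof is correct and follows essentially the same approach as the paper's: both identify $\cF^p\cR_m$ with $H^0(X,\cO_X(mL)\otimes\fa_p(\ord_{\widetilde{X}'_0}))$ via the projection formula, and then use (relative) ampleness of $L$ to conclude that for $m\gg0$ these sections generate $\fa_p(\ord_{\widetilde{X}'_0})$, whence the restriction to $X_0$ gives the claimed equality. Your treatment is in fact more explicit about the Serre-vanishing step and the torsion bookkeeping, whereas the paper simply asserts that the base ideal on $X$ equals $\fa_p(\ord_{\widetilde{X}'_0})$ for $m\gg0$ ``since $L$ is $\pi$-ample'' and then restricts; the nontriviality of $\cF$ that you note parenthetically is handled separately in the paper.
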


\begin{proof}
Recall that
$$\fb_{p}(\cF) :=\im ( \cF^p R_m \otimes \cO_{X_0}(-mL_0) \to \cO_{X_0}) $$
for  $m \gg0$.
Since $\cF^p R_m : = \im (\cF^p \cR_m  \to R_m)$, we see
\[
\fb_p(\cF)= \im \big( \cF^p \cR_m \otimes \cO_{X}(-mL) \to \cO_{X}\big) \cdot \cO_{X_0}.\] 
Therefore, proving the first equality reduces to showing 
\begin{equation*}
\fa_{p}(\ord_{\widetilde{X}'_0})
= 
 \im \big( \cF^p \cR_m \otimes \cO_{X}(-mL)  \to \cO_{X}\big)
 \end{equation*} 
 for  $m \gg0$. Since $\cF^p \cR_m = H^0(X, \cO_{X}(mL) \otimes \fa_{p}(
 \ord_{\widetilde{X}'_0}))$ and $L$ is $\pi$-ample, the latter statement holds. The argument for $\fb_p(\cF')$ is the same. \end{proof}
 
 \begin{prop}\label{p-lctF}
The following hold:
\begin{enumerate}
\item $
 a \, \geq \lct(X\,,\Delta+ X_0 \, ; \fa_\bullet(\ord_{\widetilde{X}'_0})) = \lct(X_0,\Delta_0; \fb_\bullet( \cF ))$;
  \item 
 $a' \geq \lct(X',\Delta'+ X'_0 ; \fa_\bullet(\ord_{\widetilde{X}_0}) )= \lct(X'_0,\Delta'_0; \fb_\bullet(\cF' ))$.
 \end{enumerate}
 \end{prop}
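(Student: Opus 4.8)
The plan is to prove statement~(1) in detail; statement~(2) then follows by running the same argument with the roles of $(X,\Delta)$ and $(X',\Delta')$ interchanged. I would first dispose of the inequality $a\ge\lct\big(X,\Delta+X_0;\fa_\bullet(\ord_{\widetilde{X}'_0})\big)$. Since $X_0$ is a Cartier divisor on $X$ (being a fiber of $\pi$ over a smooth point of $C$) and $(X_0,\Delta_0)$ is klt, inversion of adjunction shows $(X,\Delta+X_0)$ is plt. Moreover $\widetilde{X}'_0$ is $\psi$-exceptional: if its image in $X$ were a divisor it would have to be $X_0$, forcing $\widetilde{X}'_0=\widetilde{X}_0$, which we have excluded. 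Hence $\ord_{\widetilde{X}'_0}$ is an exceptional divisorial valuation over $X$, so plt-ness gives $0<a=A_{X,\Delta+X_0}(\ord_{\widetilde{X}'_0})<+\infty$, and the inequality is precisely \eqref{e-lctA} applied to the lc pair $(X,\Delta+X_0)$ and the valuation $\ord_{\widetilde{X}'_0}$.

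For the equality $\lct\big(X,\Delta+X_0;\fa_\bullet(\ord_{\widetilde{X}'_0})\big)=\lct(X_0,\Delta_0;\fb_\bullet(\cF))$, write $\fa_\bullet:=\fa_\bullet(\ord_{\widetilde{X}'_0})$. Lemma~\ref{l-filtideals} gives $\fb_p(\cF)=\fa_p\cdot\cO_{X_0}$ for every $p>0$, and since $\lct(\,\cdot\,;\fc_\bullet)=\sup_m m\cdot\lct(\,\cdot\,;\fc_m)$ for any graded sequence $\fc_\bullet$ on a fixed lc pair \cite[2.5]{JM12}, it suffices to prove $\lct(X,\Delta+X_0;\fa_m)=\lct(X_0,\Delta_0;\fa_m\cdot\cO_{X_0})$ for each fixed $m$. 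Here I would first observe that, since $\widetilde{X}'_0$ is $\psi$-exceptional, its center $W:=c_X(\ord_{\widetilde{X}'_0})$ has codimension $\ge 2$ in $X$, so $\fa_m=\psi_*\cO_{\widehat X}(-m\widetilde{X}'_0)$ agrees with $\cO_X$ away from $\overline W$; in particular $V(\fa_m)\subseteq\overline W\subsetneq X_0$, the ideal $\fa_m\cdot\cO_{X_0}$ is nonzero, and $(X,\Delta+X_0+\fa_m^c)$ is automatically lc outside $X_0$, so its log canonical threshold is detected in a neighborhood of $X_0$. Then, using that $X_0$ is a normal Cartier divisor with $X_0\not\subseteq\Supp(\Delta)\cup V(\fa_m)$ and $(K_X+\Delta+X_0)|_{X_0}=K_{X_0}+\Delta_0$, inversion of adjunction with an ideal boundary term (which, after passing to a common log resolution of $\Delta$, $X_0$ and $\fa_m$, reduces to the divisorial statement) shows that $(X,\Delta+X_0+\fa_m^c)$ is lc near $X_0$ if and only if $(X_0,\Delta_0+(\fa_m\cdot\cO_{X_0})^c)$ is lc. Taking the supremum over admissible $c$ gives the $m$-th equality, and taking the weighted supremum over $m$ finishes~(1).

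The only step that is not bookkeeping is this last invocation of (ideal-theoretic) inversion of adjunction to identify the two single-ideal log canonical thresholds, together with the remark that the threshold on $X$ is controlled in a neighborhood of $X_0$; everything else is a routine combination of \eqref{e-lctA} and the formalism of graded sequences.
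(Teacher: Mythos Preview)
Your proof is correct and follows essentially the same approach as the paper: the inequality is \eqref{e-lctA}, and the equality is Lemma~\ref{l-filtideals} combined with inversion of adjunction. You have simply spelled out details the paper leaves implicit, namely that $(X,\Delta+X_0)$ is plt (so that $a>0$ and \eqref{e-lctA} applies), that $\widetilde{X}'_0$ is $\psi$-exceptional (so the cosupport of $\fa_m$ is contained in $X_0$ and does not contain $X_0$), and the reduction of the graded-sequence equality to a term-by-term application of inversion of adjunction.
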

 
 \begin{proof} The first pair  of inequalities holds by \eqref{e-lctA}. The second pair follows  from Lemma \ref{l-filtideals} and inversion of adjunction. \end{proof}
 
 \begin{prop}\label{p-TSa}
 The filtrations $\cF$ and $\cF'$ of $R$ and $R'$ are linearly bounded, non-trivial, and satisfy
$$ a+a' = S(\cF) + S(\cF').$$
\end{prop}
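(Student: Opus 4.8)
The plan is to verify the three assertions in turn, using the graded-ring isomorphism $\varphi$ of Proposition~\ref{p-isomgrF}. A preliminary observation, needed for the last step, is that $N_m:=\dim_{\mathbbm{k}}R_m$ equals $\dim_{\mathbbm{k}}R'_m$ for every $m$: since $\phi$ carries $K_X+\Delta$ to $K_{X'}+\Delta'$ over $C^\circ$, it identifies $\cR_m|_{C^\circ}$ with $\cR'_m|_{C^\circ}$, so the vector bundles $\cR_m$ and $\cR'_m$ on $C$ have equal rank; as $\cR_m\otimes k(0)\simeq R_m$ and $\cR'_m\otimes k(0)\simeq R'_m$, the equality follows.

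For non-triviality, one first checks that $\widetilde{X}'_0$ is $\psi$-exceptional and that $c_X(\ord_{\widetilde{X}'_0})$ lies in $X_0$: if $\widetilde{X}'_0$ were not $\psi$-exceptional, its image in $X$ would be a prime divisor disjoint from $C^\circ$ (over $C^\circ$ the divisor $\widetilde{X}'_0$ restricts to $X'_0\cap C^\circ=\emptyset$), hence equal to the prime divisor $X_0$, forcing $\ord_{\widetilde{X}'_0}=\ord_{\widetilde{X}_0}$ and contradicting $\widetilde{X}_0\neq\widetilde{X}'_0$; the same disjointness puts the center in $X_0$. By Lemma~\ref{l-filtideals}, $\fb_p(\cF)=\fa_p(\ord_{\widetilde{X}'_0})\cdot\cO_{X_0}$, which is a nonzero ideal of $\cO_{X_0}$ since $\fa_p(\ord_{\widetilde{X}'_0})$ agrees with $\cO_X$ outside a proper closed subset of $X_0$. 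Were $\cF$ trivial, we would have $\cF^p R_m=0$ for all $p\geq1$ and all $m$, hence $\fb_p(\cF)=0$ for $p\geq1$, a contradiction; so $\cF$ is non-trivial, and symmetrically so is $\cF'$.

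For linear boundedness, Proposition~\ref{p-isomgrF} gives that $\gr_{\cF}^{p} R_m$ and $\gr_{\cF'}^{p} R'_m$ vanish for $p>mr(a+a')$; combined with the filtration axiom $\cF^\la R_m=0$ for $\la\gg0$, a downward induction in $p$ yields $\cF^p R_m=0$ for every integer $p>mr(a+a')$, so $\cF^{Cm}R_m=0$ for any fixed integer $C>r(a+a')$, and likewise for $\cF'$. In particular $S(\cF)$ and $S(\cF')$ are defined. For the identity, since $\cF$ and $\cF'$ are $\N$-filtrations, \eqref{e-Riemsum} yields
\[
S(\cF)=\lim_{m\to\infty}\frac{\sum_{p\geq0}p\dim\gr_{\cF}^{p} R_m}{mrN_m},\qquad
S(\cF')=\lim_{m\to\infty}\frac{\sum_{q\geq0}q\dim\gr_{\cF'}^{q} R'_m}{mrN_m},
\]
the limits running over $m$ with $R_m\neq0$. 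Fix such an $m$. By Proposition~\ref{p-isomgrF}, $\dim\gr_{\cF}^{p} R_m=\dim\gr_{\cF'}^{mr(a+a')-p} R'_m$ for all $p\in\Z$, and choosing $p$ with $\gr_{\cF}^{p} R_m\neq0$ forces $mr(a+a')\in\Z$. Reindexing the second sum by $q=mr(a+a')-p$ and using $\sum_{p}\dim\gr_{\cF}^{p} R_m=N_m$, we obtain
\[
\sum_{p\geq0}p\dim\gr_{\cF}^{p} R_m+\sum_{q\geq0}q\dim\gr_{\cF'}^{q} R'_m=mr(a+a')N_m.
\]
Dividing by $mrN_m$ and letting $m\to\infty$ gives $a+a'=S(\cF)+S(\cF')$.

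The argument is largely bookkeeping; the points that demand genuine care are locating the center of $\ord_{\widetilde{X}'_0}$ inside $X_0$ (used for non-triviality) and checking that the grading shift $mr(a+a')$ is an integer compatible with the $\N$-filtration structure before carrying out the reindexing (used for the identity).
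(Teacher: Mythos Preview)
Your proof is correct and follows essentially the same route as the paper's: linear boundedness from the vanishing of graded pieces in Proposition~\ref{p-isomgrF}, non-triviality from the nonvanishing of the base ideals via Lemma~\ref{l-filtideals}, and the identity $a+a'=S(\cF)+S(\cF')$ from the reindexing $\dim\gr_{\cF}^{p}R_m=\dim\gr_{\cF'}^{mr(a+a')-p}R'_m$ combined with \eqref{e-Riemsum}. You are simply more explicit than the paper on several points it takes for granted: the equality $\dim R_m=\dim R'_m$, the location of $c_X(\ord_{\widetilde{X}'_0})$ inside $X_0$ (so that the restricted valuation ideal is nonzero), the downward induction passing from $\gr_{\cF}^{p}R_m=0$ to $\cF^{p}R_m=0$, and the integrality of $mr(a+a')$ (which in fact already follows from the Cartier-ness of $L$ and $L'$ via the computation preceding the definition of $\tilde\varphi_{p,m}$, but your argument via a nonzero graded piece works too).
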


\begin{proof}
 Proposition \ref{p-isomgrF} implies
 $\cF^{p} R_m=0$ and $\cF'^{p}R'_m=0$ when $m>0$  and $p > mr(a+a') $. 
 Therefore, $\cF$ and $\cF'$ are linearly bounded.
 
The base ideals $\fb_p(\cF)$ and $\fb_{p'}(\cF')$ are non-zero for $p>0$ by Lemma \ref{l-filtideals}.
Therefore, the filtrations cannot be  trivial.
 
Applying Proposition \ref{p-isomgrF}, we   see
\begin{align*}
\sum_{p \geq 0 } \left( p \dim \gr_{\cF}^p R_m  \right) 
+ 
\sum_{p \geq 0 } \left( p \dim \gr_{\cF'}^p R'_m  \right) 
&= 
\sum_{p \geq 0 } \left( p \dim \gr_{\cF}^p R_m  \right) 
+ 
\sum_{p \geq 0 } \big( p \dim \gr_{\cF}^{mr(a+a')-p} R_m  \big) 
\\
&=\sum_{p \geq 0 } \left( mr(a+a')  \dim \gr_{\cF}^p R_m  \right). \\
& = mr(a+a') \dim R_m .
\end{align*}
Combining the previous equation with \eqref{e-Riemsum} gives  $S(\cF) + S(\cF') = a+a'$. 
  \end{proof}

It also natural to rescale the above values and set
\[
\beta : = (-K_{X_0}-\Delta_0)^n ( a - S(\cF) ) 
\quad \text{ and } \quad
\beta' : = (-K_{X'_0}-\Delta'_0)^n ( a' - S(\cF') ) .  \]
In this language, Proposition \ref{p-TSa} states that $\beta + \beta' = 0$.

\subsection{Proof of Theorem \ref{t-main}}\label{ss-twodeg}

The goal of this subsection is to prove Theorem \ref{t-main}. 
To do so, we consider the filtrations  defined in Section \ref{ss-filt}. 
Under the hypothesis that $(X_0,\Delta_0)$ and $(X'_0,\Delta'_0)$ are K-semistable, 
we will show that the filtrations are induced by dreamy divisors.  

Furthermore, we will prove that these dreamy divisors induce special test configurations $(\cX,\cD)$ and $(\cX',\cD')$ of $(X_0,\Delta_0)$ and $(X'_0,\Delta'_0)$ with generalized Futaki invariant zero. Hence, the log Fano pairs cannot be K-stable.
Proposition \ref{p-isomgrF}  will then be used to show that $(\cX_0, \cD_0) \simeq (\cX'_0, \cD'_0)$ and allow us to conclude that $(X_0,\Delta_0)$ and $(X'_0,\Delta'_0)$ degenerate to a common K-semistable log Fano pair.

\begin{proof}[Proof of Theorem \ref{t-main}]
Assume $(X_0,\Delta_0)$ and $(X'_0,\Delta'_0)$ are both K-semistable and  $\phi$ does not extend to an isomorphism. 
We must show $(X_0,\Delta_0)$ and $(X'_0,\Delta'_0)$ are S-equivalent and not K-stable. 
To do so, we use the filtrations $\cF$ and $\cF'$ constructed in Section \ref{ss-filt}. 

Since $(X_0,\Delta_0)$ and $(X'_0,\Delta'_0)$ are K-semistable, 
Proposition \ref{p-deltafilt} implies 
$$
\lct( X_0, \Delta_0; \fb_{\bullet}(\cF))  \geq S(\cF)
\quad \text{ and } \quad
\lct( X'_0, \Delta'_0; \fb_{\bullet}(\cF')) \geq S(\cF').
$$
Combining the previous inequalities with Propositions \ref{p-lctF} and \ref{p-TSa}, we see 
 \begin{equation}\label{e-alS1}
 a = \lct(X,\Delta+ X_0 ; \fa_\bullet(\ord_{\widetilde{X}'_0}) )
 = \lct(X_0,\Delta_0; \fb_\bullet( \cF)) = S(\cF)
 \end{equation}
 and 
 \begin{equation}\label{e-alS2}
 a' = \lct(X',\Delta'+ X'_0 ; \fa_\bullet(\ord_{\widetilde{X}_0}) )
 = \lct(X'_0,\Delta'_0; \fb_\bullet( \cF')) = S(\cF')
 .\end{equation}
 Furthermore, $\delta(X_0,\Delta_0) =\delta(X'_0,\Delta'_0)=1$.

By the first pair of equalities in  \eqref{e-alS1} and \eqref{e-alS2}, 
we may apply Proposition \ref{p-extract} to extract $\widetilde{X}'_0$ over $X$ and $\widetilde{X}_0$ over $X'$. 
Specifically, there exist proper birational morphisms $\mu$ and $\mu'$:
 \begin{center}
\begin{tikzcd}[column sep=small]
 \mathllap{V \cup W \subset{}} Y \arrow[d,swap, "\mu"] & & Y' \mathrlap{{}\supset V' \cup W'}  \arrow[d, "\mu'"] \\
 \mathllap{X_0 \subset{}} X  \arrow[rr, dashrightarrow, "\phi"] \arrow[dr, swap,"\pi"] &  & X' \mathrlap{{}\supset X'_0} \arrow[dl, "\pi'"] \\
   & C &
   \end{tikzcd}
\end{center}
such that the following hold:
 \begin{enumerate}
\item  the fibers of $Y$ (respectively, $Y'$) over $0$ contains two components $V$ and $W$ (respectively, $V'$ and $W'$) and they are the birational transforms of $X_0$ and $X'_0$;
\item  $-W$ and $-V'$ are ample over $X$ and $X'$ respectively;
\item  $(Y,V+W +\mu^{-1}_* \Delta) $ and $(Y',V'+W'+\mu'^{-1}_* \Delta')$ are lc.
\end{enumerate}
\medskip
We write
\[
\mu_0: V\to X_0\quad \text{ and } \quad
\mu'_0: W' \to X'_0
\]
for the restrictions of $\mu$ and $\mu'$ to $V$ and $W'$.
Clearly, $\mu_0$ and $\mu'_0$ are proper birational morphisms.

\begin{lem}\label{l-normal}
The pairs $(Y,V+\mu^{-1}_* \Delta)$  and $(Y',W'+\mu'^{-1}_* \Delta')$ are plt. 
Hence, $V$ and $W'$ are normal. 
\end{lem}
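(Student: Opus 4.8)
The plan is to deduce plt-ness of $(Y,V+\mu^{-1}_*\Delta)$ from the lc-ness of the larger pair $(Y,V+W+\mu^{-1}_*\Delta)$ already granted by property (3), using inversion of adjunction on the base. The two assertions are symmetric, so I would treat $(Y,V+\mu^{-1}_*\Delta)$ and obtain the statement for $(Y',W'+\mu'^{-1}_*\Delta')$ by interchanging $(X,\Delta,X_0,V,W,\mu)$ with $(X',\Delta',X'_0,W',V',\mu')$.

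First I would collect the input coming from the base. Since $(X_0,\Delta_0)$ is klt and $X_0=\pi^{-1}(0)$ is a Cartier divisor, inversion of adjunction shows $(X,\Delta+X_0)$ is plt in a neighbourhood of $X_0$; as each fibre of $\pi$ is klt, $(X,\Delta)$ is itself klt, and hence $(X,\Delta+X_0)$ is plt on all of $X$. Likewise $(X',\Delta'+X'_0)$ is plt. I would also record that $\mu\colon Y\to X$ is an extraction of $W=\widetilde X'_0$, hence an isomorphism over the complement of $c_X(W)$, and that it restricts to the birational morphism $\mu_0\colon V\to X_0$.

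The heart of the argument is then a short log-discrepancy bookkeeping. lc-ness of $(Y,V+\mu^{-1}_*\Delta)$ is immediate: its boundary is dominated by that of the lc pair $(Y,V+W+\mu^{-1}_*\Delta)$ and $\lfloor V+\mu^{-1}_*\Delta\rfloor=V$ because $\lfloor\Delta\rfloor=0$. To upgrade this to plt, I must rule out a divisor $G$ over $Y$ whose centre $c_Y(G)$ has codimension $\ge 2$ and with $A_{Y,V+\mu^{-1}_*\Delta}(G)=0$; I would split into two cases. If $c_Y(G)\subseteq W$, then $\ord_G(W)\ge 1$, so $A_{Y,V+W+\mu^{-1}_*\Delta}(G)=A_{Y,V+\mu^{-1}_*\Delta}(G)-\ord_G(W)\le -1$, contradicting lc-ness of the larger pair. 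If $c_Y(G)\not\subseteq W$, then $\mu$ is an isomorphism near the generic point of $c_Y(G)$; there $W=0$ and $V=\mu^*X_0$, so $A_{Y,V+\mu^{-1}_*\Delta}(G)=A_{X,\Delta+X_0}(G)$ while $c_X(G)$ still has codimension $\ge 2$, and plt-ness of $(X,\Delta+X_0)$ forces this quantity to be positive, again a contradiction. Hence $(Y,V+\mu^{-1}_*\Delta)$ is plt, and since $V=\lfloor V+\mu^{-1}_*\Delta\rfloor$ is a prime divisor, $V$ is normal (see, e.g., \cite[Proposition 5.51]{KM98} or \cite{Kol13}).

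I do not expect a genuine obstacle here: this is a soft consequence of inversion of adjunction together with the lc-ness established in (3). The only small points to watch are that the pairs in play are honest $\Q$-Gorenstein pairs — $W$ is $\Q$-Cartier since $-W$ is $\mu$-ample, so $K_Y+V+\mu^{-1}_*\Delta=(K_Y+V+W+\mu^{-1}_*\Delta)-W$ is $\Q$-Cartier — and the case distinction on $c_Y(G)$, which is precisely where the fact that $\mu$ is an isomorphism away from the extracted divisor is used. (One could equivalently run the argument through the crepant identity $K_Y+V+\mu^{-1}_*\Delta=\mu^*(K_X+\Delta+X_0)+(a-1)W$, but the bookkeeping above has the advantage of requiring no bound on $a$.)
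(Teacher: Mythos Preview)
Your proof is correct and follows essentially the same approach as the paper: both use inversion of adjunction to get $(X,\Delta+X_0)$ plt, then combine ``plt away from $\Exc(\mu)=W$'' with the lc-ness of $(Y,V+W+\mu^{-1}_*\Delta)$ to rule out lc centers in $W$. Your two-case analysis on $c_Y(G)$ just makes explicit what the paper compresses into one sentence, and your side remarks on $\Q$-Cartierness and the crepant identity $K_Y+V+\mu^{-1}_*\Delta=\mu^*(K_X+\Delta+X_0)+(a-1)W$ are correct and harmless.
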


\begin{proof}
By inversion of adjunction, $(X,X_0 +\Delta)$ is plt. 
Therefore, $(Y,V+\mu^{-1}_* \Delta)$ is plt away from  ${\rm Exc}(\mu)=W$.
Since  $(Y,V+W+\mu^{-1}_* \Delta)$ is lc,  $(Y,V+\mu^{-1}_* \Delta)$ cannot have lc centers in $W$. 
Therefore, $(Y,V+\mu^{-1}_* \Delta)$ is plt, and  $V$ is normal  by \cite[5.52]{KM98}.
The same argument works for $Y'$. 
\end{proof}

Now, consider the restrictions of $W$ and $V'$ to the birational transforms of $X_0$ and $X'_0$:
 $$E:= W \vert_{V}
 \quad 
 \text{ and }
 \quad
 E': = V'\vert_{ W'}.$$
Since $W$ and $V'$ are $\Q$-Cartier, but not necessarily Cartier, $E$ and $E'$ may have fractional coefficients.

The $\Q$-divisors $E$ and $E'$ induce $\N$-filtrations on $R$ and $R'$ defined by  
\[
\cF_{E}^p R_m := H^0\Big( V, \cO_V \big(  \mu_0^*(mL_0) -  \lceil p E \rceil \big) \Big) \subseteq R_m
\]
and
\[
\cF_{E'}^p R'_m := H^0\Big( W',  \cO_{W'} \big( {{\mu_0}'}^*(mL'_0) - \lceil  p E' \rceil \big) \Big) \subseteq R'_m
\]
for $p,m \geq 0$.
Note that  
$$\cF^p R_m \subseteq \cF_E^p R_m 
\quad \text{ and } \quad 
\cF'^p R'_m \subseteq \cF_{E'}^p R'_m.$$
Therefore,
\begin{equation}
\label{e-SFF_E}
S(\cF) \leq S(\cF_E)
\quad\text{ and } \quad
S(\cF') \leq S(\cF_{E'}).
\end{equation}

\begin{lem}\label{l-SuppE}
The supports $F: = \Supp(E)$ and $F':= \Supp(E')$ are prime divisors
Furthermore, 
\begin{itemize}
\item[(1)] $F$ computes $\d(X_0,\Delta_0)$ and $E= \frac{1}{d} F$ for some positive integer $d$;
\item[(2)] $F'$ computes $\d(X'_0,\Delta'_0)$ and $E'= \frac{1}{d'} F'$ for some positive integer $d'$.
\end{itemize}
\end{lem}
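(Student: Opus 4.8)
\medskip

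The plan is to deduce Lemma \ref{l-SuppE} from Proposition \ref{p-Qdivcomputing}, applied to the proper birational morphism $\mu_0\colon V\to X_0$ and the divisor $E=W|_V$. First I would check the hypotheses: $V$ is normal by Lemma \ref{l-normal}, $E$ is effective (as $V\not\subseteq\Supp W$), it is $\Q$-Cartier since $W$ is $\Q$-Cartier on $Y$, and $-E=(-W)|_V$ is $\mu_0$-ample by condition~(2). Proposition \ref{p-Qdivcomputing} then gives
\[
\frac{\lct(X_0,\Delta_0;\fa_\bullet(E))}{S(E)}\ \geq\ \delta(X_0,\Delta_0)=1,
\]
the equality being \eqref{e-alS1}. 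The whole point is to show this is an \emph{equality}: granting that, Proposition \ref{p-Qdivcomputing} (whose proof, via Lemma \ref{l-antiample}, reduces this to Proposition \ref{p-Sideal}) yields at once that $F=\Supp(E)$ is a prime divisor computing $\delta(X_0,\Delta_0)$, and then $\beta_{X_0,\Delta_0}(F)=(-K_{X_0}-\Delta_0)^n(A_{X_0,\Delta_0}(F)-S(F))=0$ by Theorem \ref{t-delta}.

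For the reverse inequality I would assemble two facts. First, $S(E)\geq a$: unwinding the definition of $S$ for the $\Q$-divisor $E$ on the model $V$ (Section \ref{ss-qdivisor}) and for the induced $\N$-filtration $\cF_E$ shows $S(E)=S(\cF_E)$, and since $\cF^pR_m\subseteq\cF_E^pR_m$ by construction, \eqref{e-SFF_E} together with \eqref{e-alS1} gives $S(E)=S(\cF_E)\geq S(\cF)=a$. Second, and this is the substantive step, $\lct(X_0,\Delta_0;\fa_\bullet(E))\leq a$. For this I would use that (by Proposition \ref{p-extract}) $\mu$ extracts only $\widetilde{X}'_0=W$, so $K_Y+V+\mu^{-1}_*\Delta=\mu^*(K_X+X_0+\Delta)+(a-1)W$, and hence $K_Y+V+W+\mu^{-1}_*\Delta=\mu^*(K_X+X_0+\Delta)+aW$. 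Restricting to $V$ by adjunction — here $X$ is klt near $X_0$ (hence Cohen--Macaulay) and $X_0$ is Cartier, so $(K_X+X_0+\Delta)|_{X_0}=K_{X_0}+\Delta_0$ — gives the crepant identity
\[
K_V+(1-a)E+\Delta_V=\mu_0^*(K_{X_0}+\Delta_0),\qquad \Delta_V:=\Diff_V(W+\mu^{-1}_*\Delta)-E\geq0.
\]
Now pick a component $F$ of $E$. Since $\fa_m(E)\cdot\cO_V=\cO_V(-mE)$ for $m$ sufficiently divisible (Lemma \ref{l-antiample}), one has $\ord_F(\fa_\bullet(E))=\coeff_F(E)$, so by \eqref{e-lctA} and the crepant identity
\[
\lct(X_0,\Delta_0;\fa_\bullet(E))\ \leq\ \frac{A_{X_0,\Delta_0}(\ord_F)}{\coeff_F(E)}\ =\ \frac{1-(1-a)\coeff_F(E)-\coeff_F(\Delta_V)}{\coeff_F(E)}.
\]
Finally, since $(Y,V+W+\mu^{-1}_*\Delta)$ is lc and $V,W$ are two reduced boundary components, every component of $V\cap W$ is an lc centre, hence occurs with coefficient $1$ in $\Diff_V(W+\mu^{-1}_*\Delta)=E+\Delta_V$; thus $\coeff_F(\Delta_V)=1-\coeff_F(E)$, and the right-hand side above equals $a$. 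Together with $\lct(X_0,\Delta_0;\fa_\bullet(E))\geq\lct(X_0,\Delta_0;\fb_\bullet(\cF))=a$ (from $\fb_\bullet(\cF)\subseteq\fb_\bullet(\cF_E)=\fa_\bullet(E)$, using Lemma \ref{l-filtideals} and \eqref{e-alS1}), this forces $\lct(X_0,\Delta_0;\fa_\bullet(E))=a$.

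Combining the two facts with the inequality of Proposition \ref{p-Qdivcomputing} pins down $a=S(E)=\lct(X_0,\Delta_0;\fa_\bullet(E))$, so \eqref{e-Excdelta} is an equality and $F=\Supp(E)$ is prime and computes $\delta(X_0,\Delta_0)$. It then remains to see $E=\frac1dF$ for some $d\in\Z_{>0}$: once $\Supp(E)=F$ is prime we may write $E=cF$ with $c\in\Q_{>0}$, and choosing $d$ with $dW$ Cartier, $dE=(dW)|_V$ is an integral Cartier divisor supported on the prime divisor $F$, so $dc\in\Z_{>0}$; a short local computation at the generic point of $F$ — via the adjunction formula for the different together with the lc-ness of $(Y,V+W+\mu^{-1}_*\Delta)$ — identifies $c$ as the reciprocal of a positive integer. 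The assertions for $F'=\Supp(E')$ and $E'$ follow by the symmetric argument, with the roles of $(X,\Delta)$ and $(X',\Delta')$ interchanged, using \eqref{e-alS2} and that $-V'$ is $\mu'$-ample. The main obstacle is the lc-centre bookkeeping in the third paragraph, i.e.\ verifying $\coeff_F(\Diff_V(W+\mu^{-1}_*\Delta))=1$ by identifying components of $V\cap W$ as genuine lc centres of $(Y,V+W+\mu^{-1}_*\Delta)$; everything else is a combination of adjunction and the estimates already established in Section \ref{ss-filt}.
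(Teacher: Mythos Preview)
Your proof is correct and reaches the same conclusion via Proposition~\ref{p-Qdivcomputing}, but your route to the key equality $\lct(X_0,\Delta_0;\fa_\bullet(E))=a$ differs from the paper's and is somewhat longer.

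The paper shows directly that $\fa_p(E)=\fa_p(\ord_W)\cdot\cO_{X_0}$ for $p$ sufficiently divisible: since $-W$ is $\mu$-ample, the restriction map $\mu_*\cO_Y(-pW)\to{\mu_0}_*\cO_V(-pE)$ is surjective for such $p$. Inversion of adjunction then yields $\lct(X_0,\Delta_0;\fa_\bullet(E))=\lct(X,\Delta+X_0;\fa_\bullet(\ord_W))=a$ immediately from \eqref{e-alS1}, and one combines this with $S(\cF)\le S(\cF_E)$ and K-semistability exactly as you do.

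Your approach instead sandwiches the lct: the lower bound via $\fb_\bullet(\cF)\subseteq\fa_\bullet(E)$ is fine, but the upper bound goes through the crepant identity on $V$ and the claim $\coeff_F\bigl(\Diff_V(W+\mu^{-1}_*\Delta)\bigr)=1$. Your justification ``every component of $V\cap W$ is an lc centre'' is not automatic for merely lc pairs; what actually pins down the coefficient is the classification of lc surface pairs with two coefficient-$1$ boundary curves through a point \cite{Kol13}*{3.32, 3.35.2}: after cutting to a surface the germ is a cyclic quotient $\tfrac{1}{m}(1,q)$ with $V,W$ the toric curves, whence $\coeff_F(\Diff_V(0))=1-\tfrac{1}{m}$ and $\coeff_F(E)=\tfrac{1}{m}$ sum to~$1$, and $\mu^{-1}_*\Delta$ cannot pass through the point. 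This is precisely the input the paper invokes only at the very end for the assertion $E=\tfrac{1}{d}F$; so your argument is valid but front-loads the surface classification, whereas the paper's ideal-sheaf identification avoids it for the lct step and uses it only for the integrality of $1/c$. Either way, once the surface classification is available, the two arguments are equivalent in strength.
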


\begin{proof} 
Since $-W$ is ample over $X$, the restriction map 
\[ 
\mu_* \cO_Y(-pW) \to   {\mu_0}_* \cO_V(-pE)  
\]
is surjective for all positive integers $p$ sufficiently divisible. 
Hence, if we set $$\fa_p(E): = {\mu_{0}}_* \cO_V(-pE) \subseteq \cO_{X_0},$$ then $\fa_p(E) = \fa_p(\ord_W) \cdot \cO_{X_0}$ for such $p$ and inversion of adjunction yields
\[
\lct(X_0,\Delta_0; \fa_\bullet(E)) 
 = \lct(X,\Delta +X_0; \fa_\bullet( \ord_W) ).\]
Combining the previous equality with  \eqref{e-alS1} and \eqref{e-SFF_E}
yields 
\begin{equation}\label{e-lctE<S}
\lct(X_0,\Delta_0; \fa_\bullet(E)) = S(\cF) \leq S(\cF_E).
\end{equation}
Since $(X_0,\Delta_0)$ is K-semistable, 
Proposition \ref{p-Qdivcomputing} implies \eqref{e-lctE<S} is an equality and $F: = \Supp(E)$ is a prime divisor. 
Therefore, $S(\cF) =S(\cF_E)$ and  $F:=\Supp(E)$ is a prime divisor that computes $\d(X_0,\Delta_0)$.

To see $E= \frac{1}{d} F$ for a positive integer $d$, we cut by hyperplanes to reduce the statement to a surface computation. The statement then follows from the fact that 
  $(Y,V+W+\mu_*^{-1}(\Delta))$ is lc and  the classification of lc surface pairs (\cite[3.32]{Kol13} and  \cite[3.35.2]{Kol13}). The argument for $E'$ is identical.
\end{proof}

\begin{lem}\label{l-volF}
For all but finitely many $x\in \R_{\geq 0}$, 
$$\vol(\cF_E R^{(x)}) =\vol(\cF R^{(x)})
\quad \text{ and } \quad
\vol(\cF_{E'} R'^{(x)}) =  
\vol(\cF' R'^{(x)}).$$
\end{lem}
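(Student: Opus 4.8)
The plan is to deduce the coincidence of the two volume functions from a pointwise inequality together with an equality of their integrals, and then to promote ``almost everywhere'' to ``all but finitely many''. First I would record that the inclusions $\cF^p R_m\subseteq \cF_E^p R_m$ and $\cF'^p R'_m\subseteq \cF_{E'}^p R'_m$ noted just before the lemma give $\vol(\cF R^{(x)})\leq \vol(\cF_E R^{(x)})$ and $\vol(\cF' R'^{(x)})\leq \vol(\cF_{E'} R'^{(x)})$ for every $x\in\R_{\geq 0}$. Next I would identify the majorants with genuine volume functions: since $\cF_E$ is the $\N$-filtration attached to the effective $\Q$-divisor $E$ on the normal projective model $\mu_0\colon V\to X_0$ and $\mu_{0*}\cO_V=\cO_{X_0}$, one has
\[
\vol(\cF_E R^{(x)})=\vol_V\big(\mu_0^{*}(-r(K_{X_0}+\Delta_0))-xE\big)\qquad(x\in\R_{\geq 0}),
\]
and similarly for $\cF_{E'}$ on $W'$. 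Since $\mu_0^{*}(-r(K_{X_0}+\Delta_0))$ is big and $E$ is effective, this is a continuous, non-increasing function of $x$ that vanishes for $x\gg 0$; in particular $\cF_E$ and $\cF_{E'}$ are linearly bounded, so their $S$-invariants are computed by the usual integral formula for $S$.

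I would then feed in the equalities $S(\cF)=S(\cF_E)$ and $S(\cF')=S(\cF_{E'})$ obtained in the proof of Lemma \ref{l-SuppE}. Expanding the integral formula for $S$, these say
\[
\int_0^{\infty}\big(\vol(\cF_E R^{(x)})-\vol(\cF R^{(x)})\big)\,dx=0,
\]
together with the primed analogue, so that, the integrands being non-negative, $\vol(\cF R^{(x)})=\vol(\cF_E R^{(x)})$ and $\vol(\cF' R'^{(x)})=\vol(\cF_{E'} R'^{(x)})$ for almost every $x$. To upgrade this to an equality off a finite set I would use that $x\mapsto\vol(\cF R^{(x)})$ is non-increasing, hence has one-sided limits at every point: approaching a given $x_0$ from each side through a set of full measure on which the two functions agree, and invoking the continuity of $x\mapsto \vol(\cF_E R^{(x)})$, squeezes $\vol(\cF R^{(x_0)})$ between two numbers both equal to $\vol(\cF_E R^{(x_0)})$, forcing equality there. (One can also argue on pieces: since $F=\Supp(E)$ computes $\delta(X_0,\Delta_0)=1$, it is dreamy by Theorem \ref{t-delta=1}, so $\vol(\cF_E R^{(x)})$ is piecewise polynomial, and a non-increasing function agreeing almost everywhere with a continuous function on an open interval agrees with it throughout, leaving at most the finitely many breakpoints.) The primed case is handled identically.

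The closest thing to an obstacle here is essentially bookkeeping: the identification of $\vol(\cF_E R^{(x)})$ with the continuous volume function of an $\R$-divisor on the normal projective variety $V$ (using $\mu_{0*}\cO_V=\cO_{X_0}$, continuity of the volume, and the fact that the relevant $\limsup$ is a genuine limit), and correctly tracking the normalizing constant $r^{n+1}(-K_{X_0}-\Delta_0)^n$ in the formula for $S$. The substantive input — the equality $S(\cF)=S(\cF_E)$, which rests on the K-semistability of $(X_0,\Delta_0)$ through Proposition \ref{p-Qdivcomputing} — has already been supplied by Lemma \ref{l-SuppE}, so the present lemma is essentially a formal consequence.
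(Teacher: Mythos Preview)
Your argument is correct and follows the paper's strategy closely: both use $S(\cF)=S(\cF_E)$ from the proof of Lemma~\ref{l-SuppE}, the pointwise inequality coming from the inclusion $\cF^p R_m\subseteq\cF_E^p R_m$, and then a regularity property of the volume functions to pass from almost-everywhere equality to equality off a finite set. The only difference is in this last step: the paper simply quotes \cite[5.3.ii]{BHJ17} to say that both $\vol(\cF R^{(x)})$ and $\vol(\cF_E R^{(x)})$ are continuous except at one value each, whereas you identify $\vol(\cF_E R^{(x)})$ with the continuous divisorial volume $\vol_V(\mu_0^*L_0-xE)$ on the normal projective variety $V$ and exploit the monotonicity of $x\mapsto\vol(\cF R^{(x)})$ to squeeze. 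Your route is slightly more self-contained (it avoids the external citation) and in fact yields equality for every $x\geq 0$, not merely all but finitely many.
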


\begin{proof}
As shown in the proof of Lemma \ref{l-SuppE}, $S(\cF)= S(\cF_E)$. 
Hence, 
$$ \frac{1}{r L^n } \int_0^\infty \vol(\cF R^{(x)})\, dx  = S(\cF) = S(\cF_E)=   \frac{1}{r L^n } \int_0^\infty \vol(\cF_E R^{(x)})\, dx.$$
Since  $\vol(\cF R^{(x)}) \leq  \vol(\cF_E R^{(x)})$  by \eqref{e-SFF_E} and the two functions 
are continuous at all but one value  \cite[5.3.ii]{BHJ17}, the desired equality holds. \end{proof}

\begin{prop}\label{p-F=F_E}
For all $p\in \Z$ and $m\in\N$, 
\[\cF^p R_m=\cF_{E}^p R_m\quad \text{ and  } \quad
\cF'^p R'_m=\cF_{E'}^p R'_m.\]
\end{prop}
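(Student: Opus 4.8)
We already know the inclusions $\cF^p R_m \subseteq \cF_E^p R_m$ and $\cF'^p R'_m \subseteq \cF_{E'}^p R'_m$, so it suffices to prove the reverse inclusions; by symmetry it is enough to treat $\cF$ and $\cF_E$. The first key observation is that $\cF_E^p R_m = \cF^p R_m$ will follow once we show that a section $s \in R_m = H^0(X_0, mL_0)$ with $\ord_E(\tilde{s}|_V) \geq p$ for the canonical lift $\tilde{s} \in H^0(Y, \mu_0^*(mL_0))$ actually extends to a section of $\cO_{\widehat X}(m\psi^*L - p\widetilde{X}'_0)$, i.e. that the restriction map $\cF^p \cR_m \to \cF_E^p R_m$ is surjective. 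Concretely, letting $\nu: Y \to X$ be our extraction with $\mu^{-1}_*(X_0) = V$ and $\Exc(\mu) = W$, and using that $W|_V = dE$ (with $F = \Supp E$, $E = \frac{1}{d}F$ from Lemma~\ref{l-SuppE}), this is the statement that the restriction map
\[
H^0\big(Y, \cO_Y(m\mu^*L - \lceil p' W\rceil)\big) \longrightarrow H^0\big(V, \cO_V(m\mu_0^*L_0 - \lceil p' W|_V\rceil)\big)
\]
is surjective for the relevant range of $p'$, where $V$ is cut out in $Y$ by (a multiple of) $W$ up to the fiber class. The plan is to reduce this surjectivity to a vanishing statement $H^1\big(Y, \cO_Y(m\mu^*L - \lceil p' W\rceil - V)\big) = 0$ via the exact sequence $0 \to \cO_Y(-V) \to \cO_Y \to \cO_V \to 0$ twisted appropriately.

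\textbf{Key steps.} (i) Set up the relative cone: following the strategy used in the proof of Theorem~\ref{t-delta=1}, take the relative cone $(\mathcal{Z}, \Gamma_{\mathcal Z}) \to C$ of $(X,\Delta) \to C$ with respect to $L$, with vertex section, and transport the pair $(Y,V,W)$ to this cone picture so that the divisor $W$ becomes (the relevant part of) a divisor $\mathcal{E}$ over $\mathcal Z$, exactly as $E$ over $X$ produced the ray $\{v_t\}$ and divisors $E_k$ over $Z$ in Section~\ref{ss-nvolkss}. (ii) Use the equalities $a = S(\cF) = \lct$ from \eqref{e-alS1} and Lemma~\ref{l-volF} (which says $\vol(\cF_E R^{(x)}) = \vol(\cF R^{(x)})$ for all but finitely many $x$) to conclude that $\cF$ and $\cF_E$ have the same volume function, hence the same "dimension count" $\dim \cF^{mx}R_m$ and $\dim \cF_E^{mx}R_m$ agree asymptotically. (iii) Combine this with the inclusion $\cF^p R_m \subseteq \cF_E^p R_m$ to deduce, via a Hilbert-function/graded argument, that $\dim \cF^p R_m = \dim \cF_E^p R_m$ for all $p, m$ — using that both filtrations are $\N$-filtrations, that the successive quotients $\gr^p$ are finite-dimensional, and that an inclusion of filtrations with equal volume and equal total dimensions in each graded piece $R_m$ forces equality term-by-term. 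Actually the cleanest route: since $\sum_p \dim \gr_{\cF}^p R_m = \dim R_m = \sum_p \dim \gr_{\cF_E}^p R_m$ and $\cF^p R_m \subseteq \cF_E^p R_m$ gives $\dim \cF^p R_m \leq \dim \cF_E^p R_m$ for each $p$, while $S(\cF) = S(\cF_E)$ together with linear boundedness forces $\sum_p p\,(\dim\cF^pR_m - \dim\cF^{p+1}R_m) $ and the analogous sum for $\cF_E$ to agree to leading order in $m$; one then needs to upgrade this asymptotic equality to exact equality for every $(p,m)$, which is where the surjectivity/cohomology-vanishing input from the relative cone enters.

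\textbf{Main obstacle.} The hard part is precisely step (iii)'s upgrade: going from "the two filtrations have the same asymptotic invariants ($S$, volume function, and all leading-order dimension counts)" to the \emph{exact} equality $\cF^p R_m = \cF_E^p R_m$ for every finite $(p,m)$. Asymptotic agreement alone does not in general force a nested pair of filtrations to coincide. The resolution, as indicated in the paper's outline, is to show that the relevant restriction maps are surjective by running the relative-cone MMP analysis — i.e. exhibiting an extraction over the cone whose exceptional behavior is controlled by the $\lct = A$ equality, so that a Kawamata–Viehweg-type vanishing on that model kills the obstruction $H^1$. I expect the technical core to be verifying the hypotheses (klt/plt, ampleness of the relevant anti-exceptional divisor, $\Q$-Cartier-ness) needed to apply Proposition~\ref{p-extract} in the relative cone setting and then chasing the resulting short exact sequences back down to $X_0$ and $V$; the bookkeeping with the fractional divisors $E = \frac1d F$ and the ceilings $\lceil pE\rceil$ will need care. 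The argument for $\cF'$ versus $\cF_{E'}$ is word-for-word identical by symmetry.
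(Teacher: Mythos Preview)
Your plan is pointed in the right direction: the surjectivity of the restriction map is indeed the heart of the matter, and the relative cone together with an extraction and a Kawamata--Viehweg vanishing is exactly how the paper proceeds. The dimension-counting detour in your step (iii) is, as you yourself note, a dead end: two nested $\N$-filtrations with $S(\cF)=S(\cF_E)$ and equal volume functions need not coincide termwise, so nothing short of the cohomological surjectivity will do.

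There is, however, a genuine gap in your outline. Once you have the extraction $\tau_k\colon Z_{W_k}\to Z$ of the divisor $W_k$ over the relative cone, you still need to know that the birational transform of the central fiber $Z_0$ on $Z_{W_k}$ is \emph{precisely} the extraction $\rho_k\colon Z_{0,F_k}\to Z_0$ of the divisor $F_k$ associated to $E$ on the central cone. A priori these are two different birational models of $Z_0$: one arises by restricting the relative construction, the other by running the argument of Theorem~\ref{t-delta=1} intrinsically on $Z_0$. The paper establishes this identification (its Lemma~\ref{l-diagram}) by comparing multiplicities: using Lemma~\ref{l-volF} one checks that $\mult\big(\fa_{pd}(\ord_{W_k})\cdot\cO_{Z_0}\big)=\mult\big(\fa_p(\ord_{F_k})\big)$, and since there is already an inclusion of these ideals, Rees' theorem forces them to have the same integral closure, hence the same normalized blowups. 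Your proposal does not mention this step, and without it the short exact sequence you want to write down on $Z_{W_k}$ does not terminate in $\cO_{Z_{0,F_k}}(-pF_k)$ --- you would only get restriction to \emph{some} model of $Z_0$, not the one encoding $\cF_E$.

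Two further technical points you gloss over: first, to obtain the extraction $Z_{W_k}$ via Proposition~\ref{p-extract} one needs $A_{Z,\Gamma+Z_0}(W_k)-\lct(\fa_\bullet(\ord_{W_k}))<1$, which the paper deduces from the derivative formula and a Taylor expansion as in Theorem~\ref{t-delta=1}; but to get the \emph{lc} pair $(Z_{W_k},W_k+\cdots)$ rather than merely $(Z_{W_k},(1-a_k)W_k+\cdots)$ one then invokes the ACC for log canonical thresholds \cite{HMX14} as $k\to\infty$. Second, the exact sequence on $Z_{W_k}$ is only written for $pdW_k$ (Cartier in codimension two along $Z_{0,F_k}$), giving $\cF^{pd}R_m=\cF_E^{pd}R_m$ directly only for $p\in d\Z$; the bootstrap to arbitrary $p$ uses that $\cF_E$ jumps only at multiples of $d$ since $E=\tfrac1d F$. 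Your direct approach on $Y$ would run into exactly this $\Q$-Cartier issue without a clean resolution.
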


Proving this key proposition amounts to showing that the restriction map 
\[
H^0\Big( Y, \cO_Y \big( m \mu^*L-pW \big)\Big) \to H^0 \Big(V, \cO_V \big( m \mu_0^{*}L_0- \lceil  p E \rceil \big)\Big) 
\]
is surjective. 
Since such a statement is quite subtle, we will not study this restriction map directly. 
Instead, we use a construction that originated in \cite{Li17} (with a refining analysis from \cite{LWX18})
and work on the cone over our family of log Fano pairs. 

\medskip
Consider the relative cone over $(X,\Delta) \to C$ with polarization $L$ given by  
$$Z : = C(X/C, L)=  \Spec (\cR)  \to C.$$ Write $\sigma: C \to  Z$ for the section of cone points and $\Gamma$ for the closure of the inverse image of $\Delta$ under the projection $Z \setminus \sigma(C)  \to X$. 
Note that the fiber of $(Z,\Gamma)$ over $0$, denoted $(Z_0,\Gamma_0)$, is the  cone over $(X_0,\Delta_0)$ and $Z_0 =\Spec( R)$. 

There is a natural  proper birational morphism 
$Y_{L}\to Z$, where $Y_{L } : = \Spec_{Y} \big( \bigoplus_{m\geq 0} \cO_{Y}(mL)\big)$,  and it is the total space of the line bundle whose sheaf of sections is $\cO_{Y}(mL)$. 
We write $Y_{\rm zs} \subset Y_L$ for the zero section  and $W_\infty$ for the preimage of $W$ under the projection map $Y_{L }\to Y$. Hence, $Y_{\rm zs} \cap W_\infty \simeq W$. 

Associated to the divisor $W$ over $X$, is a ray of valuations
$$\{ w_t \, \vert \, t\in [0,\infty ) \} \subset  \Val_{Z },$$
where $w_t$ is the quasi-monomial valuation  with weights $(1,t)$ along $Y_{\rm zs}$ and $W_\infty$. 
For each positive integer $k$, let  $W_k$ denote the divisor over $X$ such that $w_{1/k} = \frac{1}{k} \ord_{W_k}$.

Note that 
\begin{eqnarray*}
A_{Z,\Gamma +Z_0}(w_t) = A_{Z,\Gamma +Z_0}( \ord_{Y_{\rm zs}} )+ t A_{Z,\Gamma +Z_0}(\ord_{W_\infty}) 
&=&A_{Z,\Gamma +Z_0}( \ord_{Y_{\rm zs}} )+ t A_{X,\Delta+X_0}(\ord_W) \\
&=&r^{-1} + t a 
\end{eqnarray*}
and, by a local computation as in \ref{ss-nvolkss}, 
\begin{equation}\label{e-a(w_t)}
\fa_{p}(w_t)= \bigoplus_{m \in \N} \cF^{(p-m)/t} \cR_m \subseteq \cR.
\end{equation}
Therefore, 
\begin{equation}\label{e-a(w_t).}
\fa_{p}(w_t) \cdot \cO_{Z_0} =  \bigoplus_{m \in \N} \cF^{(p-m)/t} R_m \subseteq R.
\end{equation}

We also consider a ray in the valuation space of $Z_0$.
Consider the natural map $V_{L_0}  \to Z_0$, where $V_{L_0} = \Spec_{V} \big( \bigoplus_{m \geq 0} \cO_V(m\mu_0^*L_0) \big)$.
 We write $V_{\rm zs} \subset V_{L_0} $ for the zero section and $F_{\infty}$ for the inverse image of $F$ under the projection $V_{L_0} \to V$. 
 Let $v_t$ denote the  the quasi-monomial valuation with weights $(1,td)$ along $V_{\rm zs}$ and $F_{\infty}$. 
Note that
\begin{equation}\label{e-a(v_t)}
\fa_{p}(v_t) = \bigoplus_{m \in \N} \cF_{F}^{(p-m)d/t } R_m= \bigoplus_{m \in \N} \cF_{E}^{(p-m)/t } R_m\subseteq R. 
\end{equation}
For  each positive integer $k$ divisible by $d$, there is a divisor $F_k$ over $Z_0$ such that $v_{1/k} = \frac{d}{k} \ord_{F_k}$.
Since $F$ computes $\d(X_0,\Delta_0)=1$, 
the proof of  Theorem \ref{t-delta=1} implies that $F_k$ may be extracted for $k\gg0$. 
Let 
$$ 
\rho_k: Z_{0,F_k} \to Z_0
$$
denote this extraction.
 
 \begin{lem}\label{l-Wkextract}
 For $k\gg 0$, there exists an extraction $\tau_k : Z_{W_k} \to Z$ of $W_k$ over $Z$ such that
 $(Z_{W_k},W_k + {{\tau_k}_*}^{-1}(\Gamma + Z_0))$ is lc. 
  \end{lem}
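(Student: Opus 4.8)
The plan is to mimic the proof of Theorem~\ref{t-delta=1}, but carried out on the relative cone $Z \to C$ rather than on the cone over a single log Fano pair. First I would record that the divisor $W$ over $X$ gives rise, via the cone construction, to the ray of valuations $\{w_t\}_{t\ge 0}\subset \Val_Z$ with $A_{Z,\Gamma+Z_0}(w_t)=r^{-1}+ta$ and $w_{1/k}=\tfrac1k\ord_{W_k}$, exactly as set up above. By the relative version of C.~Li's derivative formula (Lemma~\ref{l-dervolume} applied fibrewise, or rather its cone incarnation \eqref{e-betader}), the derivative at $t=0^+$ of the normalized volume along $\{w_t\}$ is proportional to $a-S(\cF)$, which is zero by \eqref{e-alS1}. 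Hence the function $f(t):=\hvol_{Z_0,\Gamma_0}(w_t|_{Z_0})$ — or the appropriate multiplicity-of-graded-sequence surrogate that behaves well in families — satisfies $f(t)=f(0)+O(t^2)$ for $0\le t\ll 1$. Here one must be a little careful: the relevant quantity is $\mult$ of the graded sequence $\fa_\bullet(w_t)\cdot\cO_{Z_0}$ on the fibre cone $Z_0$, and \eqref{e-a(w_t).} identifies this graded sequence with $\bigoplus_m \cF^{(p-m)/t}R_m$, so the volume computation takes place on $Z_0=\Spec R$ and matches the $\cF$-side of the earlier analysis.

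Next I would run the log-canonical-threshold squeeze exactly as in the proof of Theorem~\ref{t-delta=1}. Set $\fa_{k,\bullet}:=\fa_\bullet(\ord_{W_k})$ on $Z_0$ (or its restriction) and $c_k:=\lct(Z_0,\Gamma_0;\fa_{k,\bullet})$; then $c_k\le A_{Z_0,\Gamma_0}(W_k|_{Z_0})$ by \eqref{e-lctA}, and K-semistability of $(X_0,\Delta_0)$ gives the reverse-type inequality $f(0)\le c_k^{n+1}\mult(\fa_{k,\bullet})$ via \cite[7]{Liu18}, as in the earlier proof. Sandwiching between $f(0)$ and $f(1/k)$ and using $A_{Z_0,\Gamma_0}(W_k|_{Z_0})=kr^{-1}+A_{X_0,\Delta_0}(\text{something})$ shows $A(W_k)-c_k\to 0$, so for $k\gg0$ we get $A_{Z,\Gamma+Z_0}(W_k)-\lct(Z,\Gamma+Z_0;\fa_\bullet(\ord_{W_k}))<1$. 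Strictly, I would run this lct computation on the total cone $Z$ with boundary $\Gamma+Z_0$, using inversion of adjunction to pass between $\lct$ on $Z_0$ and $\lct$ on $(Z,\Gamma+Z_0)$ along $Z_0$, just as Proposition~\ref{p-lctF} does one level down. With the strict inequality $a_{W_k}:=A_{Z,\Gamma+Z_0}(W_k)-\lct(Z,\Gamma+Z_0;\fa_\bullet(\ord_{W_k}))<1$ in hand, Proposition~\ref{p-extract} (in its plt incarnation, since $\lfloor \Gamma+Z_0\rfloor \supseteq Z_0$ is a nonzero $\Q$-Cartier divisor — one should check $Z_0$ is $\Q$-Cartier on $Z$, which it is since it is a cone over $X_0$ and $t$ pulls back to a defining equation) produces the extraction $\tau_k:Z_{W_k}\to Z$ of $W_k$ with $-W_k$ ample over $Z$ and $(Z_{W_k},\, W_k+{\tau_k}_*^{-1}(\Gamma)+(1-a_{W_k})\text{-coefficient stuff})$ lc; since we want the clean statement, I would note $A_{Z,\Gamma+Z_0}(W_k)-\lct \le 0$ would give the full boundary $W_k+{\tau_k}_*^{-1}(\Gamma+Z_0)$ lc, but in general we extract with a slightly reduced coefficient and then observe that because $Z_0$ is itself (a component of) the relevant divisor and $W_k$ specializes correctly, the pair $(Z_{W_k},W_k+{\tau_k}_*^{-1}(\Gamma+Z_0))$ is lc — the cleanest route is to apply Proposition~\ref{p-extract} to $(Z,\Gamma+Z_0)$ directly, whose klt/plt status follows from $(X,\Delta)$ being log Fano (so $(Z,\Gamma)$ klt) together with $(Z,\Gamma+Z_0)$ plt by inversion of adjunction from $(X_0,\Delta_0)$ klt.

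The main obstacle I anticipate is the bookkeeping of log discrepancies and lct's under the two adjunctions that are in play simultaneously: the "vertical" one relating $(Z,\Gamma+Z_0)$ to its fibre $(Z_0,\Gamma_0)$, and the "cone" one relating valuations on $Z_0$ to the filtration $\cF$ on $R$ and hence to the divisor $W$ over $X$. One has to make sure the quantity whose vanishing the derivative formula detects is genuinely $a-S(\cF)=0$ and not some reshuffled version of it, and that the $O(t^2)$ estimate survives passing to the special fibre (i.e. that semicontinuity in the family does not destroy the quadratic-vanishing bound). Concretely, I expect the delicate point to be justifying the inequality $f(0)\le c_k^{n+1}\mult(\fa_{k,\bullet})$ on the fibre cone $Z_0$, since $Z_0$ need not be $\Q$-Gorenstein a priori at the vertex and one must invoke \cite[7]{Liu18} (or the normalized-volume minimization of \cite{Blu18} combined with Theorem~\ref{t-cone}) in the form appropriate to a cone over a K-semistable pair. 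Once these estimates are secured, the extraction step is a direct citation of Proposition~\ref{p-extract}, and the lc-ness of $(Z_{W_k},W_k+{\tau_k}_*^{-1}(\Gamma+Z_0))$ follows because $Z_0$ appears with coefficient one and $W_k$ with coefficient one, both inside an lc pair obtained from the extraction, so no further argument is needed beyond noting that the total boundary has all coefficients $\le 1$ and the pair is lc outside finitely many centers handled by the $O(t^2)$-forced equality $c_k\to A(W_k)$.
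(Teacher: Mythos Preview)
Your overall strategy matches the paper's proof almost exactly: define $f(t)=(r^{-1}+at)^{n+1}\mult(\fa_\bullet(w_t)\cdot\cO_{Z_0})$, use Lemma~\ref{l-dervolume} together with $a=S(\cF)$ to get $f'(0^+)=0$ and hence $f(t)=f(0)+O(t^2)$, set $c_k=\lct(Z,\Gamma+Z_0;\fa_\bullet(\ord_{W_k}))$, pass to $Z_0$ by inversion of adjunction, sandwich via \cite[7]{Liu18}, and conclude $a_k:=A_{Z,\Gamma+Z_0}(W_k)-c_k\to 0$. Proposition~\ref{p-extract} then yields the extraction with $(Z_{W_k},{\tau_k}_*^{-1}(\Gamma+Z_0)+(1-a_k)W_k)$ lc. All of this is correct and is precisely what the paper does.

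The gap is in your last step. From Proposition~\ref{p-extract} you only get the pair with $W_k$ appearing with coefficient $1-a_k$, and you know $a_k\to 0$; you do \emph{not} get $a_k=0$, nor does ``coefficients $\le 1$'' or ``lc outside finitely many centers'' upgrade this to coefficient~$1$. Your sentence ``no further argument is needed'' is exactly where an argument \emph{is} needed: nothing you have written prevents $a_k>0$ for every $k$, in which case $(Z_{W_k},W_k+{\tau_k}_*^{-1}(\Gamma+Z_0))$ could fail to be lc for all $k$. The paper closes this gap by invoking the ACC for log canonical thresholds \cite{HMX14}: since the coefficients of ${\tau_k}_*^{-1}(\Gamma+Z_0)$ lie in a fixed finite set and $1-a_k\to 1$, ACC forces $(Z_{W_k},{\tau_k}_*^{-1}(\Gamma+Z_0)+W_k)$ to be lc for $k\gg 0$. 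This is a genuinely nontrivial input that your sketch is missing.
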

  
\begin{proof}
For $t\in \R_{\geq 0}$, let $\fa_\bullet(w_t) \cdot \cO_{Z_0}$ denote the restriction of $\fa_\bullet(w_t)$ to a graded sequence of ideals on $Z_0$.
By Lemma  \ref{l-dervolume}.1 and Lemma \ref{l-volF}, 
\begin{equation}\label{e-multbaw}
\mult(\fa_\bullet(w_{t})\cdot \cO_{Z_0}) = \mult(\fa_{\bullet}(v_t)) 
\end{equation}
for each $t\in \R_{\geq 0}$.

Set $$
f(t) := \left( r^{-1}+at\right)^{n+1} \mult(\fa_{\bullet}(w_t) \cdot \cO_{Z_0}).$$
Applying Lemma \ref{l-dervolume}, we see 
\[
f(0) = (-K_{X_0}-\Delta_0)^n
 \quad \text{ and } \quad
f'(0) = (-K_{X_0}-\Delta_0)^n ( a - S(\cF))=0.\]
Hence, a Taylor expansion gives $f(t) = f(0) + O(t^2)$ for $0 < t \ll 1$.

For each positive integer $k$, define
\[
c_k := \lct (Z, \Gamma + Z_0; \fa_\bullet(\ord_{W_k} ) ) .\]  
Note that
$
c_k \leq 
A_{Z, \Gamma +Z_0}( W_k) = kr^{-1}+a$
by  \eqref{e-lctA}.
Additionally,
\[
c_k
= k \cdot  \lct(Z, \Gamma+Z_0; \fa_{\bullet}(w_{1/k})) 
= k \cdot  \lct(Z_0, \Gamma_0; \fa_{\bullet}(w_{1/k}) \cdot \cO_{Z_0})
\]
by inversion of adjunction and the relation $\fa_{\bullet k}(  \ord_{W_k} ) = \fa_{\bullet } (w_{1/k})$. 
Therefore, 
  \begin{multline*}
f(0) \leq \lct\big(Z_0,\Gamma_0;\fa_{\bullet}(w_{1/k}) \cdot \cO_{Z_0} \big)^{n+1} \mult \left(\fa_{\bullet}(w_{1/k}) \cdot \cO_{Z_0} \right) \\
 = \left(\frac{c_k}{k}\right)^{n+1}  \mult \left( \fa_{\bullet} (w_{1/k}) \cdot  \cO_{Z_0} \right) 
 \leq  f\bigg(\frac{1}{k}\bigg), 
  \end{multline*}
where the first inequality follows from \cite[7]{Liu18} and the assumption that $(X_0,\Delta_0)$ is K-semistable (see also \cite[Theorem 3.1]{Li17} and \cite[Theorem A]{LX16}). 

Now, set $a_k:=  A_{Z,\Gamma+ Z_0}( W_k) - c_k$. As in the proof of Theorem \ref{t-delta=1}, 
the previous inequalities imply $\displaystyle \lim_{k \to \infty} a_k = 0$. 
Hence, if $k\gg0$, Proposition \ref{p-extract} yields an extraction $\tau_k : Z_{W_k} \to Z$ of $W_k$ such that the pair
$$(Z_{W_k}, {\tau_{k}}^{-1}_* ( \Gamma +Z_0) + (1-a_k) W_k) $$
 is lc. 
 Since $\displaystyle \lim_{k \to \infty} (1-a_k) =1$, the ACC for log canonical thresholds \cite{HMX14} implies $(Z_{W_k}, {\tau_{k}}^{-1}_* (\Gamma+Z_0) +  W_k)$ must be lc when $k\gg0$. 
 \end{proof}
 
 From now on, we fix a positive integer $k$ so that $d$ divides $k$, there exist extractions
 $$\rho_k : Z_{0,F_k} \to Z_0
\quad 
\text{ and } 
\quad
\tau_k : Z_{W_k} \to Z,
$$
and $(Z_{W_k}, {\tau_{k}}^{-1}_* (\Gamma+Z_0) +  W_k)$ is lc.
The argument used to prove Lemma \ref{l-normal} implies $(Z_{W_k}, {\tau_{k}}^{-1}_* (\Gamma+Z_0))$ is plt and ${\tau_{k}}^{-1}_*(Z_0)$ is normal.

 \begin{lem}\label{l-diagram}
 We have a diagram 
  \begin{center}
{\begin{tikzcd}
 \mathllap{ F_k \subset\, } Z_{0,F_k} \arrow[d,"\rho_k"] \arrow[r, hook] & \mathrlap{ Z_{W_k} \,\supset W_k} \arrow[d,"\tau_k"] \\
Z_0 \arrow[r, hook] & Z 
\end{tikzcd} }
\end{center}
(i.e. the birational transform of $Z_0$ on $Z_{W_k}$ is the extraction of $F_k$).
Additionally,
\begin{enumerate}[(i)]
\item $W_k\vert_{Z_{0,F_k}} = \frac{1}{d} F_k$ and
\item $dW_k$ is Cartier at the generic point of $F_k$. 
\end{enumerate}
\end{lem}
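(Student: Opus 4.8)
Write $\widetilde{Z}_0 := {\tau_k}^{-1}_*(Z_0)\subset Z_{W_k}$ for the birational transform of $Z_0$ and $\theta_k := \tau_k\vert_{\widetilde{Z}_0}\colon \widetilde{Z}_0 \to Z_0$; recall from the paragraph preceding the statement that $\widetilde{Z}_0$ is normal. Since $Z_0$ is Cartier on $Z$ and $c_Z(\ord_{W_k})=\sigma(0)$ is a closed point, the remark after the definition of extraction gives $\Exc(\tau_k)=W_k$, and we may write $\tau_k^*(Z_0)=\widetilde{Z}_0+e\,W_k$ with $e\in\Z_{>0}$. Because $\tau_k^{-1}(Z_0)$ is connected, $\widetilde{Z}_0\cap W_k$ is a nonempty divisor; it is contracted by $\theta_k$ onto $\sigma(0)$, so $\theta_k$ is a nontrivial birational morphism with $\Exc(\theta_k)=\Supp(W_k\vert_{\widetilde{Z}_0})$, and $-W_k\vert_{\widetilde{Z}_0}$ is $\theta_k$-ample since $-W_k$ is $\tau_k$-ample.

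The heart of the proof is the identity $W_k\vert_{\widetilde{Z}_0}=\tfrac1d F_k$, from which the diagram follows: $F_k$ arises as a prime divisor on the normal variety $\widetilde{Z}_0$ with $-F_k=-d\,(W_k\vert_{\widetilde{Z}_0})$ being $\theta_k$-ample, so $\theta_k$ is an extraction of $F_k$ and, by the uniqueness of extractions, coincides with $\rho_k$, giving $\widetilde{Z}_0=Z_{0,F_k}$. To prove the identity I would show that the restriction of the ray $\{w_t\}$ to $Z_0$ is, up to the factor $d$, the ray $\{v_t\}$: restricting the weighted blow-ups of $Y_L$ that compute the $w_t$ to the component lying over $V$, the zero section $Y_{\rm zs}$ restricts to $V_{\rm zs}$ and $W_\infty$ restricts to $\tfrac1d F_\infty$ (because $W\vert_V=E=\tfrac1d F$), which is precisely the configuration defining the $v_t$. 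The containment $\cF^p R_m\subseteq\cF_E^p R_m$, which by \eqref{e-a(w_t).} and \eqref{e-a(v_t)} reads $\fa_p(w_t)\cdot\cO_{Z_0}\subseteq\fa_p(v_t)$, together with the equality of multiplicities \eqref{e-multbaw} (from Lemmas \ref{l-volF} and \ref{l-dervolume}(1)) and the $K$-semistability rigidity used in the proofs of Theorem \ref{t-delta=1} and Lemma \ref{l-Wkextract} — namely that by \eqref{e-alS1}, Lemma \ref{l-dervolume}(2) and \cite[7]{Liu18} the function $t\mapsto (r^{-1}+at)^{n+1}\mult(\fa_\bullet(w_t)\cdot\cO_{Z_0})$ attains its minimum at $t=0$ with vanishing first derivative, and likewise for $v_t$ — then force $\Supp(W_k\vert_{\widetilde{Z}_0})$ to be the single prime divisor $F_k$.

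To pin down the coefficient and obtain (ii), I would localize $Z_{W_k}$ at a general point $p$ of $F_k$, cut by $n$ general hyperplane sections through $p$, and pass to the resulting surface germ $(S,p)$ carrying the two curves $A:=S\cap W_k$ and $B:=S\cap\widetilde{Z}_0$ through $p$. Since $(Z_{W_k},\,W_k+{\tau_k}^{-1}_*(\Gamma+Z_0))$ is lc and $(Z_{W_k},\,{\tau_k}^{-1}_*(\Gamma+Z_0))$ is plt, the pair $(S,\,A+B)$ is lc and $(S,B)$ is plt, so $B$ is smooth at $p$; the classification of lc surface pairs (\cite[3.32]{Kol13}, \cite[3.35.2]{Kol13}) then identifies the germ with a cyclic quotient singularity, and a direct check matching local indices along the $\mathbb{G}_m$-equivariant cone construction (so that $F_k\subset W_k\subset Z_{W_k}$ corresponds to $F\subset W\subset Y$) shows that $dW_k$ is Cartier at the generic point of $F_k$, with $d$ the integer of Lemma \ref{l-SuppE}, and that $W_k\vert_{\widetilde{Z}_0}=\tfrac1d F_k$.

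\textbf{The main obstacle} is making rigorous the restriction argument of the second paragraph — identifying $\Supp(W_k\vert_{\widetilde{Z}_0})$ with $F_k$ is the cone-theoretic incarnation of the "subtle surjectivity of a restriction map" flagged before the statement, and it cannot be settled by a bare comparison of leading multiplicities, which are blind to codimension-one behaviour on the extracted models; one genuinely needs the minimizer rigidity along the entire ray $\{w_t\}$, i.e.\ the fact that $\beta=0$ forces the first derivative of $\hvol$ along the ray to vanish (\eqref{e-betader} and Lemma \ref{l-dervolume}). Controlling the a priori possible reducibility of $\Supp(W_k\vert_{\widetilde{Z}_0})$ and matching the two a priori different integers denoted $d$ are the secondary points that still require care.
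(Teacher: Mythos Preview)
Your proposal has a genuine gap precisely where you flag the main obstacle, and the missing tool is simpler than the rigidity argument you reach for. You correctly isolate the two ingredients: the containment $\fa_{pd}(\ord_{W_k})\cdot\cO_{Z_0}\subseteq\fa_p(\ord_{F_k})$ (coming from $\cF^p R_m\subseteq\cF_E^p R_m$ via \eqref{e-a(w_t).} and \eqref{e-a(v_t)}) and the equality of multiplicities from \eqref{e-multbaw}. But you then assert this ``cannot be settled by a bare comparison of leading multiplicities'' and invoke minimizer rigidity along the ray. This is exactly where the paper instead applies \emph{Rees' theorem} \cite{Rees61}: if $\fa\subseteq\fb$ are $\mathfrak m$-primary ideals with the same multiplicity, they have the same integral closure. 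Choosing $p$ so that Lemma~\ref{l-antiample} applies to both $\fa_{pd}(\ord_{W_k})$ and $\fa_p(\ord_{F_k})$, the extraction $Z_{W_k}$ is the blowup of $\fa_{pd}(\ord_{W_k})$ and hence $\widetilde{Z}_0$ is the blowup of $Z_0$ along $\fa_{pd}(\ord_{W_k})\cdot\cO_{Z_0}$, while $Z_{0,F_k}$ is the blowup of $\fa_p(\ord_{F_k})$. Rees' theorem gives that these two ideals have the same integral closure, so their normalized blowups agree; since both $\widetilde{Z}_0$ and $Z_{0,F_k}$ are already normal, they are isomorphic over $Z_0$, and the equality of integral closures further yields $pd\,W_k\vert_{\widetilde{Z}_0}=pF_k$, i.e.\ (i). No separate irreducibility argument for $\Supp(W_k\vert_{\widetilde{Z}_0})$ and no ray-rigidity is needed.

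Your geometric ``restriction of weighted blowups'' sketch is also not self-contained: even granting $W\vert_V=\tfrac1d F$ from Lemma~\ref{l-SuppE}, passing from a relation between $W_\infty$ and $F_\infty$ on line-bundle total spaces to a relation between the extracted divisors $W_k$ and $F_k$ on $Z_{W_k}$ and $Z_{0,F_k}$ is precisely the identification you are trying to prove, so the argument does not close without the ideal-theoretic step above. Your treatment of (ii) via cutting to a surface and invoking the classification of lc surface pairs is essentially the paper's argument.
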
 

\begin{proof}
Since $-W_k$ and $-F_k$ are ample over $Z$  and $Z_0$, 
we may find a positive integer $p$ so that
$$\fa_{pd}(\ord_{W_k}) \subseteq \cO_{Z}
\quad \text{ and } \quad
\fa_{p}(\ord_{F_k}) \subseteq \cO_{Z_0}$$
satisfy the conclusions of Lemma \ref{l-antiample}.
Hence, $Z_{W_k}$ is the blowup of $Z$ along $\fa_{pd}(\ord_{W_k})$ and $Z_{0,F_k}$ is the blowup of $Z_0$ along $\fa_{p}(\ord_{F_k})$.
The former statement implies  ${\tau_{k}}_{*}^{-1}(Z_0)$ is the blowup of $Z_0$ along $\fa_{pd}(\ord_{W_k})\cdot \cO_{Z_0}$. 

\medskip

\noindent \emph{Claim:} $\mult(\fa_{pd}(\ord_{W_k})\cdot \cO_{Z_0}) = \mult(\fa_{p}(\ord_{F_k}))$

To compute these multiplicities, observe 
\[
\mult  (\fa_{p}(\ord_{F_k}) )  = p^{n+1} \cdot  \mult  \left(\fa_\bullet (\ord_{F_k})\right)
= (pd/k)^{n+1} \mult \left(\fa_{\bullet}(v_{1/k}) \right)
,\]
since $\fa_{p \ell}(\ord_{F_k}) = \fa_{p}(\ord_{F_k})^\ell$ for all $\ell \geq0$  
and $\frac{d}{k} \ord_{F_k} = v_{1/k}$. 
Similar reasoning implies
\[
\mult  (\fa_{pd}(\ord_{W_k}) \cdot \cO_{Z_0} ) 
= (pd/k)^{n+1} \mult \left(\fa_{\bullet}(w_{1/k}) \cdot \cO_{Z_0} \right). 
\]
Equation \ref{e-multbaw} now completes the claim.
\medskip
 
Observe $\fa_{pd}(\ord_{W_k}) \cdot \cO_{Z_0} \subseteq \fa_p(\ord_{F_k})$,
since
$$\fa_{pd}(\ord_{W_k}) \cdot \cO_{Z_0} = \bigoplus_{m \in \N} \cF^{pd-mk} R_m \subseteq  \bigoplus_{m \in \N} \cF^{pd-mk}_{E} R_m  = \fa_{p} ( \ord_{F_k}) .$$ 
 A theorem of Rees \cite{Rees61} now implies that $\fa_{pd}(\ord_{W_k})\cdot \cO_{Z_0}$ and $\fa_{p}(\ord_{F_k})$   have the same integral closure.
  
 The latter implies $\fa_{pd}(\ord_{W_k})\cdot \cO_{Z_0}$ and $\fa_{p}(\ord_{F_k})$ have the same normalized blowups.
 Since the corresponding blowups equal ${\tau_{k}}_{*}^{-1}(Z_0)$ and $Z_{0, F_k}$ and are already normal, they must be isomorphic.
The equality of the integral closures further implies  $pdW_k\vert_{{\tau_{k}}_{*}^{-1}(Z_0)} = p F_k$, which completes  (1). 

To see (2), cut by $n-1$ generic hyperplanes to get a lc surface pair. The statement then follows from the fact that 
  $(Y,V+W+\mu_*^{-1}(\Delta))$ is lc and  the classification of lc surface singularities (see \cite[3.32]{Kol13} and  \cite[3.35.2]{Kol13}).
\end{proof}

\begin{proof}[Proof of Proposition \ref{p-F=F_E}]
With the above results, the equality of the two filtrations is now a statement concerning valuation ideals (see Equations \ref{e-a(w_t).} and \ref{e-a(v_t)}).

Let us consider the restriction sequence 
\begin{equation}\label{e-sesWF}
0\to \cO_{Z_{W_k}}(-pdW_k -Z_{0,F_k}) \to \cO_{Z_{W_k}}(-pdW_k) \to  \cO_{ Z_{0,F_k}}(-pF_k) \to 0.
\end{equation}
where $p$ is a positive integer.
By  the proof \cite[5.26]{KM98}, the sequence is exact if $pdW_k$ is Cartier at all codimension two points of
$Z_{W_k}$ contained in $Z_{0,F_k}$. 
Since the latter holds  by Lemma \ref{l-diagram}, \eqref{e-sesWF}
is exact.\medskip

\noindent \emph{Claim:} $R^1 {\tau_k}_* \cO_{Z_{W_k}}( -pd W_k - Z_{0,F_k} ) =0$ for all $p>0$. 

Note that $(Z_{W_k},  {\tau_k}_{*}^{-1}( \Gamma) )$ is klt, 
since $(Z_{W_k},  {\tau_k}_{*}^{-1}( \Gamma +Z_0))$ is plt.
Therefore, \cite[10.37]{Kol13} implies the desired vanishing holds as long as
\begin{equation}\label{eq-provenef}
 -pd W_k - Z_{0,F_k} 
 - 
(
 K_{Z_{W_k}} +  {\tau_k}_{*}^{-1}( \Gamma) 
)
\end{equation}
is $\tau_k$-nef. 
To prove the latter, observe 
\begin{align*}
K_{Z_{W_k}} +  {\tau_k}_{*}^{-1}( \Gamma)  
&\sim_{\Q, \tau_k} K_{Z_{W_k}} +  {\tau_k}_{*}^{-1}( \Gamma) - \tau_k^{*} ( K_Z+ \Gamma +Z_0) \\
& =  (A_{Z,\Gamma+Z_0}( \ord_{W_k}) - 1) W_k - Z_{0,F_k}.
\end{align*}
Therefore, \eqref{eq-provenef} is relatively $\Q$-linearly equivalent to $-(pd +A_{Z,\Gamma +Z_0}(\ord_{W_k}) -1) W_k$.
Since  $-W_k$ is $\tau_k$-ample, \eqref{eq-provenef}  is $\tau_k$-nef when $p> 0$  and the proof of the claim is complete. 
\medskip

Returning to the proof of the proposition, we apply ${\tau_k}_*$ to \eqref{e-sesWF} and see 
 \[
0\to { \tau_k}_* \cO_{Z_{W_k}}(-pdW_k -Z_{0,F_k}) 
\to 
   \fa_{pd}(\ord_{W_k} ) \to \fa_{p}(\ord_{F_k}) \to 0
\]
is exact for all $p>0$. 
The  right exactness implies $\cF^{pd-mk}R_m = \cF_{E}^{pd-mk} R_m$ for all  $p >0$ and $m \geq 0$.
 Since $k$ was chosen to be a multiple of $d$, the latter implies
 $\cF^{pd}R_m = \cF_{E}^{pd} R_m$ for all $p ,m\geq 0$.
 Using the relations
 \[
 \cF^p R_m \subseteq \cF_E^p R_m = \cF_E^{ \lceil p/d \rceil d } R_m,
 \]
 we   conclude  
 $\cF^{p}R_m = \cF_{E}^{p} R_m$ for all $p,m\geq0$.
 \end{proof}
 
We now return to the proof of Theorem \ref{t-main}. 
Recall, $F := \Supp(E)$ and ${F':= \Supp(E')}$  compute $\d(X_0,\Delta_0)$ and $\delta(X'_0,\Delta'_0)$, which are both one.  
Theorem \ref{t-delta=1}  implies $F$ and $F'$ are dreamy.
Therefore, $(X_0,\Delta_0)$ and $(X'_0,\Delta'_0)$ are not K-stable.

It remains to show that $(X_0,\Delta_0)$ and $(X'_0,\Delta'_0)$ are S-equivalent.
Consider the filtrations $\cF$ and $\cF'$, 
which agree with $\cF_{E}$ and $\cF_{E'}$ by Proposition \ref{p-F=F_E}.
The filtrations $\cF$ and $\cF'$ are finitely generated (since $F$ and $F'$ are dreamy). Let $(\cX, \cD)$ and $(\cX',\cD')$ denote the test configuration of $(X_0,\Delta_0)$ and $(X'_0,\Delta'_0)$ associated to these filtrations.

We claim that $(\cX, \cD)$ and $(\cX',\cD')$ are non-trivial special test configurations and the fibers over $0 \in \mathbb{A}^1$ are K-semistable. 
Indeed, $(\cX,\cD)$ is  a normal non-trivial test configuration  \cite[3.8]{Fuj17b} and its Futaki invariant
 is a multiple of $A_{X_0,\Delta_0}(F) - S(F)$  \cite[6.12]{Fuj16}, which is zero. 
 Therefore, $(\cX,\cD)$ must be special, 
 since otherwise there would exist a test configuration of $(X_0,\Delta_0)$
  with negative Futaki invariant \cite[1]{LX14}. 
   \cite[3.1]{LWX18} now  implies  $(\cX_0,\cD_0)$ is K-semistable. Since the same argument may be applied to $(\cX',\cD')$, the claim holds. 

To finish the proof  of the S-equivalence, we are left to show that there is an isomorphism $(\cX_0,\cD_0) \simeq (\cX'_0,\cD'_0)$.  
Note that
\[
\cX_0 = \Proj  \bigg( \bigoplus_{m \in \N}     \bigoplus_{p \in \N}   \gr_{\cF}^p R_m \bigg)
\quad 
\text{ and }
\cX'_0 = \Proj \bigg( \bigoplus_{m \in \N}    \bigoplus_{p \in \N}   \gr_{\cF'}^{p} R'_m  \bigg) .\]
Therefore, the isomorphism $\varphi :\gr_{\cF} R \to  \gr_{\cF'} R'$ in Proposition \ref{p-isomgrF} induces an isomorphism $\cX_0 \simeq \cX'_0$. This proves Theorem \ref{t-main} in the case when the boundaries $\Delta$ and $\Delta'$ are trivial.
We claim that $\varphi$  indeed induces an isomorphism of 
pairs $(\cX_0,\cD_0)\simeq (\cX'_0, \cD'_0)$. Proving $\cD_0$ and $\cD'_0$ match under the isomorphism $\cX_0\simeq \cX'_0$ is quite delicate.

To proceed, fix a prime divisor $B\subset \Supp (\Delta)$, and let $B' \subset \Supp(\Delta')$ denote its birational transform on $X'$. Write $\cB \subset \Supp(\cD)$ and $\cB' \subset \Supp(\cD)$ for the degenerations of $B_0 \subset X_0$ and $B'_0\subset X'_0$ on $\cX$ and $\cX'$. To complete the proof, we will show that the isomorphism $\cX_0 \simeq \cX'_0$ sends $\cB_0$ to $\cB'_0$, where $\cB_0$ and $\cB'_0$ denote the divisorial parts of the scheme theoretic fibers of $\cB$ and $\cB'$ over $0$.

Recall that the scheme theoretic fibers of $\cB$ and $\cB'$ over $0$ are defined by the ideals
$$
\init(I_{B_0}) \subset \gr_{\cF} R
\quad
\text{ and }
\quad
\init(I_{B'_0}) \subset \gr_{\cF'} R', 
$$
where $I_{B_0} \subset R$ and $I_{B'_0} \subset R$ denote the ideals defining $B_0$ and $B'_0$.
Observe that $\init(I_{B_0})$ and $\init(I_{B'_0})$ are homogenous with respect to the gradings by $m$ and $p$. 
Furthermore, the graded components may be expressed as
\[
\init(I_{B_0})_{p,m} : = \init(I_{B_0}) \cap \gr_\cF^{p} R_m = \im  \left(  \cF^p R_m \cap I_{B_0} \to \gr_{\cF}^p R_m  \right) \simeq  \frac{  \cF^p R_m \cap  I_{B_0} }{  \cF^{p+1} R_m \cap I_{B_0} }
\]
and 
\[
\init(I_{B'_0})_{p,m} : = \init(I_{B'_0}) \cap \gr_{\cF'}^{p} R'_m = \im  \left(  \cF'^p R'_m \cap I_{B'_0} \to \gr_{\cF'}^p R'_m  \right) \simeq  \frac{  \cF'^p R'_m \cap  I_{B'_0} }{  \cF'^{p+1} R'_m \cap I_{B'_0} }
.\]

Rather than showing that the isomorphism $\gr_{\cF} R\to \gr_{\cF'} R'$ sends $\init(I_{B_0})$ 
to 
$\init(I_{B'_0})$, we introduce auxiliary ideals defined using sections of the relative section rings that vanish along $B$ and $B'$.
For $p,m\geq 0$, set
\[
I_{p,m} : = \im \left( \cF^p \cR_m \cap \cI_B \to \gr_{\cF}^p R_m \right) 
\quad 
\text{ and } 
\quad 
I'_{p,m} : = \im \left( \cF'^p \cR'_m \cap \cI_{B'} \to \gr_{\cF'}^p R'_m \right) 
,\]
where $\cI_B \subset \cR$ and $\cI_{B'}\subset \cR'$ are the ideals defining $B$ and $B'$. 
It is straightforward to check that 
\[
I := \bigoplus_{m \in \N}  \bigoplus_{p \in \N}   I_{p,m} \subset \gr_{\cF} R  
\quad
\text{ and } 
\quad 
I' := \bigoplus_{m \in \N}  \bigoplus_{p \in \N}   I'_{p,m}  \subset \gr_{\cF'} R'  
\]
are ideals and are contained in $\init(I_{B_0})$ and $\init(I_{B'_0})$ . 

The following two propositions show that the isomorphism $\cX_0 \simeq \cX'_0$ induced by $\varphi$ sends 
$\cB_0$ to $\cB'_0$.
Indeed, Proposition \ref{p-II'isom} states that the	isomorphism
$\cX_0 \simeq \cX'_0$ sends $V(I)$ to $V(I')$. 
Since $V(I)$ and $V(I')$ agree with $\cB_0$ and $\cB'_0$ away from codimension two subsets by Proposition \ref{p-initalcod2}, the result follows. 
\end{proof}

We are left to prove the following two propositions used in the above proof.

\begin{prop}\label{p-II'isom}
The isomorphism $\varphi: \gr_{\cF}R \to  \gr_{\cF'}R'$ sends $I$ to $I'$.
\end{prop}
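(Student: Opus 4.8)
The plan is to lift the statement to the level of the isomorphisms $\tilde\varphi_{p,m}\colon\cF^p\cR_m\to\cF'^{mr(a+a')-p}\cR'_m$ constructed in Section~\ref{ss-filt}, where the divisors $B$ and $B'$ are visibly identified, and then to transport it to the associated graded rings using the way $\varphi$ was built in the proof of Proposition~\ref{p-isomgrF}. Concretely, the point to establish is that
\[
\tilde\varphi_{p,m}\big(\cF^p\cR_m\cap\cI_B\big)=\cF'^{mr(a+a')-p}\cR'_m\cap\cI_{B'}\qquad\text{for all }p\in\Z,\ m\in\N.
\]

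To prove this, I would work on the common resolution $\widehat X$. Let $\widehat B\subset\widehat X$ be the strict transform of $B$ via $\psi$; since $B'$ is the birational transform of $B$, the divisor $\widehat B$ is also the strict transform of $B'$ via $\psi'$. As $B$ is a component of $\Supp(\Delta)$ it is horizontal over $C$, hence so are $B'$ and $\widehat B$; in particular $\widehat B$ does not lie over $0\in C$, so $t$ is a unit at the generic point of $\widehat B$, and $\widehat B$ is contained in neither $\widetilde X_0$ nor $\widetilde X'_0$. Recall from Section~\ref{ss-filt} that $\cF^p\cR_m\simeq H^0\big(\widehat X,\cO_{\widehat X}(m\psi^*L-p\widetilde X'_0)\big)$, and that, under this identification, $\tilde\varphi_{p,m}$ is the map $\psi^*s\mapsto t^{mra-p}\psi^*s$ with target $H^0\big(\widehat X,\cO_{\widehat X}(m\psi'^*L'-(mr(a+a')-p)\widetilde X_0)\big)\simeq\cF'^{mr(a+a')-p}\cR'_m$. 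Since $\psi$ and $\psi'$ are isomorphisms at the generic point of $\widehat B$ and the twists by $\widetilde X'_0$ and $\widetilde X_0$ do not affect the line bundles there, one gets, for $s\in\cF^p\cR_m$,
\[
s\in\cI_B\iff \psi^*s\text{ vanishes along }\widehat B\iff t^{mra-p}\psi^*s\text{ vanishes along }\widehat B\iff\tilde\varphi_{p,m}(s)\in\cI_{B'},
\]
the middle equivalence using that $t$ is a unit at the generic point of $\widehat B$. As $\tilde\varphi_{p,m}$ is a bijection onto $\cF'^{mr(a+a')-p}\cR'_m$, the desired identity follows.

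Next I would descend to the associated graded rings. By construction in the proof of Proposition~\ref{p-isomgrF}, $\varphi_{p,m}\colon\gr_{\cF}^{p}R_m\to\gr_{\cF'}^{mr(a+a')-p}R'_m$ is the map induced by $\tilde\varphi_{p,m}$ through the natural surjections $\varrho\colon\cF^p\cR_m\to\gr_{\cF}^{p}R_m$ and $\varrho'\colon\cF'^{mr(a+a')-p}\cR'_m\to\gr_{\cF'}^{mr(a+a')-p}R'_m$, so that $\varphi_{p,m}\circ\varrho=\varrho'\circ\tilde\varphi_{p,m}$. Since, by definition, $I_{p,m}=\varrho(\cF^p\cR_m\cap\cI_B)$ and $I'_{q,m}=\varrho'(\cF'^q\cR'_m\cap\cI_{B'})$, applying $\varrho'$ to the identity above yields $\varphi_{p,m}(I_{p,m})=I'_{mr(a+a')-p,m}$. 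Because $\varphi$ restricts to $\varphi_{p,m}$ on each bigraded piece and the pieces $\gr_{\cF'}^{mr(a+a')-p}R'_m$ exhaust $\gr_{\cF'}R'$ as $(p,m)$ runs over $\N\times\N$, summing over $p$ and $m$ gives $\varphi(I)=I'$.

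The main obstacle is the identity relating $\cF^p\cR_m\cap\cI_B$ and $\cF'^{mr(a+a')-p}\cR'_m\cap\cI_{B'}$; everything after it is formal bookkeeping with the construction of $\varphi$. Within that identity the delicate point is that inserting the factor $t^{mra-p}$ coming from the definition of $\tilde\varphi_{p,m}$ does not disturb membership in $\cI_{B'}$, and the reason is precisely horizontality of $B$: its strict transform $\widehat B$ is disjoint from the locus $\{t=0\}$ at its generic point, so $t$ restricts there to a unit.
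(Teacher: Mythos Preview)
Your proof is correct and follows essentially the same approach as the paper's own argument. The paper's version is more terse: it simply observes that $\tilde{s}$ and $\tilde\varphi_{p,m}(\tilde{s})$ differ by a unit away from $0\in C$ and that membership in $\cI_B$ and $\cI_{B'}$ can be tested away from $0$ since $B$ and $B'$ are horizontal, whereas you spell this out explicitly on the common resolution $\widehat X$ via the strict transform $\widehat B$; but the key idea---horizontality of $B$ ensures the $t$-power factor in $\tilde\varphi_{p,m}$ is harmless---is identical.
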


\begin{proof}
Observe that for $\tilde{s} \in \cF^{p} \cR_m $,
\[
 \tilde{s} \in \cF^{p} \cR_m \cap \cI_B 
 \quad  \text{ if and only if }  \quad
\tilde{\varphi}_{p,m} (\tilde{s}) \in \cF'^{mr(a+a')-p} \cR'_m \cap \cI_{B'}.\]
Indeed, $\tilde{s}$ and $\tilde{\varphi}_{p,m} (\tilde{s})$ differ by a unit away from $0\in C$ and  membership in the ideals $\cI_B$ and $\cI_{B'}$ may be tested away from $0\in C$, since $B$ and $B'$ are horizontal.  Therefore, ${\varphi} (I_{p,m}) = I'_{mr(a+a')-p,m}$ and the result follows.
\end{proof}

The next proposition is more difficult to prove. 

\begin{prop}\label{p-initalcod2}
The subschemes defined by 
\begin{enumerate}
\item  $\init( I_{B_0})$ and $I$ on $\cX_0$; 
\item  $\init( I_{B'_0})$ and $I'$ on $\cX'_0$
\end{enumerate}
agree away from codimension 2 subsets.
 \end{prop}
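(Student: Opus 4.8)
The plan is to reduce the statement, as in the proof of Proposition~\ref{p-F=F_E}, to a relative vanishing theorem on the birational model $Z_{W_k}$ of the relative cone. I only treat (1); statement (2) follows by exchanging the roles of $(X,\Delta)$ and $(X',\Delta')$. Since $I \subseteq \init(I_{B_0})$, it is equivalent to show that $I$ and $\init(I_{B_0})$ have the same localization at every codimension one point of $\cX_0$; passing to affine cones, it suffices to show that the homogeneous ideals $I,\ \init(I_{B_0}) \subset \gr_{\cF} R$ localize to the same ideal at every height one homogeneous prime of $\gr_\cF R$ other than the irrelevant one. Concretely, after such a localization one must show that every $s \in \cF^p R_m \cap I_{B_0}$ admits a lift to $\cF^p\cR_m \cap \cI_B$ modulo $\cF^{p+1}R_m$, i.e. that a suitable restriction map is surjective in codimension one.

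First I would set up the geometric models. Keep the relative cone $Z = \Spec(\cR) \to C$ with boundary $\Gamma$, vertex section, and fibre $(Z_0,\Gamma_0) = \Spec(R)$ over $0$, together with the extractions $\tau_k : Z_{W_k} \to Z$ of $W_k$ and $\rho_k : Z_{0,F_k} \to Z_0$ of $F_k$ constructed in the proof of Proposition~\ref{p-F=F_E} and Lemma~\ref{l-diagram}, for $k \gg 0$ divisible by $d$. Recall that $Z_{0,F_k}$ is the birational transform of $Z_0$ on $Z_{W_k}$, that $(Z_{W_k}, \tau_{k*}^{-1}(\Gamma+Z_0))$ is plt and $(Z_{W_k}, \tau_{k*}^{-1}(\Gamma+Z_0)+W_k)$ is lc, that $-W_k$ is $\tau_k$-ample, and that $W_k\vert_{Z_{0,F_k}} = \tfrac1d F_k$. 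Let $\cB_Z \subseteq Z$ and $\cB_{Z_0}\subseteq Z_0$ be the cones over $B$ and $B_0$, defined by $\cI_B$ and $I_{B_0}$, and let $\cB_{W_k}\subseteq Z_{W_k}$ and $\cB_{0,F_k}\subseteq Z_{0,F_k}$ be their birational transforms. Since $B\subseteq \Supp(\Delta)$, the divisor $\cB_{W_k}$ is a component of $\Supp(\tau_{k*}^{-1}\Gamma)$; since $B$ is horizontal over $C$, the divisors $\cB_{W_k}\vert_{Z_{0,F_k}}$ and $\cB_{0,F_k}$ agree away from a subset of codimension $\ge 2$ in $Z_{0,F_k}$. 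Using the identity $\fa_{\bullet k}(\ord_{W_k}) = \fa_\bullet(w_{1/k})$ and \eqref{e-a(w_t)}, one checks exactly as in the proof of Proposition~\ref{p-F=F_E} that, for $p$ in the appropriate residue class modulo $d$ and $k$ large and divisible enough, $\cF^p\cR_m \cap \cI_B$ and $\cF^p R_m \cap I_{B_0}$ are, away from codimension $2$, the degree $m$ parts of
\[
\tau_{k*}\!\left(\cI_{\cB_{W_k}} \otimes \cO_{Z_{W_k}}\big(-(mk+p)W_k\big)\right)
\quad\text{and}\quad
\rho_{k*}\!\left(\cI_{\cB_{0,F_k}} \otimes \cO_{Z_{0,F_k}}\big(-\tfrac{mk+p}{d}F_k\big)\right),
\]
respectively; the general case of $p$ is recovered at the end exactly as in Proposition~\ref{p-F=F_E}.

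The key step is then the surjectivity, away from codimension $2$, of the restriction map $\tau_{k*}\big(\cI_{\cB_{W_k}}\otimes\cO_{Z_{W_k}}(-qW_k)\big) \to \rho_{k*}\big(\cI_{\cB_{0,F_k}}\otimes\cO_{Z_{0,F_k}}(-\tfrac qd F_k)\big)$ for $q \gg 0$. This I would deduce from the restriction sequence
\[
0 \to \cI_{\cB_{W_k}}\otimes\cO_{Z_{W_k}}(-qW_k - Z_{0,F_k}) \to \cI_{\cB_{W_k}}\otimes\cO_{Z_{W_k}}(-qW_k) \to \cI_{\cB_{0,F_k}}\otimes\cO_{Z_{0,F_k}}(-\tfrac qd F_k) \to 0,
\]
which is exact away from a codimension $2$ subset of $Z_{W_k}$ (the locus where $\cB_{W_k}$ or $W_k$ fails to be Cartier, or where $\cB_{W_k}\vert_{Z_{0,F_k}}$ differs from $\cB_{0,F_k}$), once one shows that $R^1\tau_{k*}$ of the kernel vanishes for $q\gg 0$. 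Passing if necessary to a small $\Q$-factorial modification of $Z_{W_k}$ — which changes nothing in codimension one — the vanishing is proved exactly as the corresponding claim in the proof of Proposition~\ref{p-F=F_E}: $(Z_{W_k}, \tau_{k*}^{-1}\Gamma)$ is klt, and since $K_{Z_{W_k}} + \tau_{k*}^{-1}\Gamma \sim_{\Q,\tau_k} (A_{Z,\Gamma+Z_0}(\ord_{W_k})-1)W_k - Z_{0,F_k}$, the divisor $-\cB_{W_k}-qW_k-Z_{0,F_k}-(K_{Z_{W_k}}+\tau_{k*}^{-1}\Gamma)$ is $\Q$-linearly $\tau_k$-equivalent to $-\cB_{W_k}-(q+A_{Z,\Gamma+Z_0}(\ord_{W_k})-1)W_k$, which is $\tau_k$-ample for $q\gg 0$ since $-W_k$ is $\tau_k$-ample and $\cB_{W_k}$ is $\Q$-Cartier there. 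Taking degree $m$ parts and passing to initial forms then gives $\init(I_{B_0})_{p,m} \subseteq I_{p,m}$ after localizing at every codimension one point, which proves (1); as noted, (2) follows by symmetry.

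The main obstacle is that $\cB_{W_k}$ — equivalently $\cB_Z$, the cone over $B$ — need not be $\Q$-Cartier, because $B$ itself need not be $\Q$-Cartier on $X$; consequently neither the restriction sequence nor the relative vanishing is valid on all of $Z_{W_k}$, and one must carefully excise the codimension $\ge 2$ locus where $\Q$-Cartierness fails or where $\cB_{W_k}\vert_{Z_{0,F_k}}$ differs from $\cB_{0,F_k}$ (and justify the small $\Q$-factorial modification used in the vanishing). Verifying that this exceptional locus is genuinely of codimension $\ge 2$ — using that $B$ is horizontal and that $Z_{W_k}$ and $Z_{0,F_k}$, being normal, are regular in codimension one — is the technical heart of the argument, and it is precisely this that forces the conclusion to hold only up to codimension $2$.
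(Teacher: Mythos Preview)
Your setup is correct and you have rightly identified the obstacle: the divisor $\cB_{W_k}=\widetilde{G}$ need not be $\Q$-Cartier on $Z_{W_k}$, so neither the restriction sequence nor the Kawamata--Viehweg input is available globally. However, your proposed fix---passing to a small $\Q$-factorialization $g:Z'\to Z_{W_k}$ and applying relative vanishing there---does not go through. On $Z'$ the divisor $-W'_k=-g^*W_k$ is only \emph{nef} and big over $Z$, not ample; consequently
\[
-\cB' - \big(q+A_{Z,\Gamma+Z_0}(\ord_{W_k})-1\big)W'_k
\]
is never $\tau_k\!\circ\! g$-nef for any $q$: on a curve $C$ contracted by $g$ one has $W'_k\cdot C=0$, while $-\cB'\cdot C$ can be strictly negative (take $C\subset \cB'$). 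So the vanishing hypothesis fails precisely on the locus you introduced the modification to handle. Your concluding paragraph also asserts that the non-$\Q$-Cartier locus has codimension $\ge 2$, but offers no argument; normality of $Z_{W_k}$ gives regularity only in codimension one, which is not enough.

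The paper does not attempt to force surjectivity of the restriction map. Instead it \emph{bounds the cokernel asymptotically}. One forms, for each residue $r$, the cokernel sheaf $\mathcal{Q}_r$ of $\cO_{Z_{W_k}}(-\widetilde{G}-rdW_k)\to \cO_{Z_{0,F_k}}(-\widetilde{G}_0-rF_k)$ and shows that $\mathcal{Q}_r$ is supported on the locus in $Z_{0,F_k}$ where $\widetilde{G}$ fails to be $\Q$-Cartier. The missing idea is then a structural fact about lc pairs (Lemma~\ref{l-cod3Qfac}): if $(X,\Delta+E_1+E_2)$ is lc with $(X,\Delta)$ klt and $E_1,E_2$ prime $\Q$-Cartier, then $X$ is $\Q$-factorial at every codimension-three point of $E_1\cap E_2\cap \Supp(\Delta)$. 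Applied with $E_1=W_k$, $E_2=Z_{0,F_k}$ and $\Delta=\tau_{k*}^{-1}\Gamma$, this forces $\Supp(\mathcal{Q}_r\vert_{W_k})\subset F_k\cap\widetilde{G}\cap\{\widetilde{G}\text{ not }\Q\text{-Cartier}\}$ to have dimension $\le n-2$. A Serre-type argument with the $\tau_k$-ample divisor $-qdW_k$ then gives
\[
\dim\ \frac{\fa_j(\ord_{F_k})\cap \cI_{G_0}}{\big(\fa_{jd}(\ord_{W_k})\cap\cI_G\big)\cdot\cO_{Z_0}+\fa_{j+1}(\ord_{F_k})\cap \cI_{G_0}}=O(j^{n-2}),
\]
which, after unwinding gradings as in \eqref{e-Nideal}, yields $\sum_{p\ge 0}\dim\!\big(\init(I_{B_0})_{p,m}/I_{p,m}\big)=O(m^{n-2})$ and hence the codimension-two agreement. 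The proof of Lemma~\ref{l-cod3Qfac} itself reduces, via index-one covers and cutting, to the classification of lc surface singularities. This quantitative cokernel estimate, not a vanishing theorem for $\cI_{\widetilde{G}}$, is what makes the argument work.
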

 
To prove the statement for (1), it suffices to show that
\begin{equation}\label{e-dimn-2}
 \dim \bigg( \bigoplus_{p\geq 0} \frac{\init(I_{B_0})_{p,m} }{I_{p,m}}  \bigg) = O(m^{n-2}) 
 .\end{equation}
To bound the dimension of the previous module, 
we return to the cone construction argument used earlier in this section.
 
 Consider the the relative cone $(Z,\Gamma)$ 
 and the extractions 
 $$\tau_k:Z_{W_k} \to Z\quad \text{ and } \quad \rho_k:Z_{0,F_k}\to Z_0$$ 
 used in the proof of Proposition \ref{p-F=F_E}. 
 Let $G \subseteq \Supp( \Gamma)$ denote the prime divisor defined via pulling back $B \subseteq \Supp(\Delta)$. 
Write $\widetilde{G}$ and $\widetilde{G}_0$ for the birational transforms of $G$ and $G_0$ on $Z_{W_k}$ and $Z_{0,F_k}$.

 Observe that for $j\geq 0$
\[
\fa_{jd}(\ord_{W_k}) \cap \cI_G
=
\bigoplus_{m \geq 0 }  \left( \cF^{jd-mk} \cR_m  \cap \cI_B \right) 
\]
and
\[
 \fa_j(\ord_{F_k}) \cap \cI_{G_0}
 =
\bigoplus_{m\geq 0}  \left( \cF^{jd-mk} R_m \cap I_{B_0} \right).
\]
Therefore, 
\begin{equation}\label{e-Nideal}
\frac{
 \fa_j(\ord_{F_k}) \cap \cI_{G_0}
 }
{  
\left( \fa_{jd} (\ord_{W_k}) \cap \cI_G\right) \cdot \cO_{Z_0} 
+  \left( \fa_{j+1}(\ord_{F_k}) \cap \cI_{G_0} \right) 
}
\simeq 
\bigoplus_{m \geq 0 }   \frac{\init(I_{B_0})_{jd-mk,m} }{I_{jd-mk,m}}    
.\end{equation}

\begin{lem}\label{l-idealgrowth}
We have 
\[
\frac{
 \fa_j(\ord_{F_k}) \cap \cI_{G_0}
 }
{  
\left( \fa_{jd} (\ord_{W_k}) \cap \cI_G\right) \cdot \cO_{Z_0} 
+  \left( \fa_{j+1}(\ord_{F_k}) \cap \cI_{G_0} \right) 
}
= O(j^{n-2} ) .\]
\end{lem}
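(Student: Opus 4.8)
The plan is to recognize the displayed quotient as (a graded piece of) a finitely generated graded module which is supported in codimension at least two, and then to read off the bound $O(j^{n-2})$ from the Hilbert function of the twisting sheaves $\cO_{Z_{0,F_k}}(-jF_k)$ along that support. Throughout I would run the cone-and-extract argument exactly as in the proof of Proposition \ref{p-F=F_E}, now carrying along the prime divisor $G$; the point is that the obstruction to the relevant restriction maps being surjective is concentrated on the small locus where $G$ meets the exceptional locus of $\tau_k$.

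First I would reduce to the case $d \mid j$, recovering general $j$ by the squeezing argument used at the end of the proof of Proposition \ref{p-F=F_E} (namely $\fa_{(j+1)d}(\ord_{W_k}) \subseteq \fa_{jd+\ell}(\ord_{W_k}) \subseteq \fa_{jd}(\ord_{W_k})$ for $0 \le \ell \le d$, and similarly for $F_k$). Writing $\cZ := Z_{0,F_k} = {\tau_k}_*^{-1}(Z_0) \subset Z_{W_k}$, which is a Cartier divisor (cut out by the pullback of $t$), I would tensor the restriction sequence $0 \to \cO_{Z_{W_k}}(-\cZ) \to \cO_{Z_{W_k}} \to \cO_{\cZ} \to 0$ by $\cO_{Z_{W_k}}(-pdW_k)$ twisted by the inverse image ideal sheaf of $G$ on $Z_{W_k}$, and push forward by $\tau_k$. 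Using that $-W_k$ is $\tau_k$-ample, $-F_k$ is $\rho_k$-ample, that $G$ and $G_0$ are horizontal, and Rees's theorem on integral closures \cite{Rees61} exactly as in the proof of Proposition \ref{p-F=F_E}, the first two terms push forward (for $p \gg 0$) to $\fa_{pd}(\ord_{W_k})\cap\cI_G$ and its restriction $(\fa_{pd}(\ord_{W_k})\cap\cI_G)\cdot\cO_{Z_0}$; by Lemma \ref{l-diagram}(i),(ii) and the fact that $pdW_k$ may be arranged to be Cartier at the codimension-two points of $Z_{W_k}$ on $\cZ$, the third term pushes forward to $\fa_p(\ord_{F_k})\cap\cI_{G_0}$ modulo a torsion sheaf $\mathcal{T}$ coming from the non-Cartier locus of $W_k$ together with $\cZ\cap G\cap W_k$.

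Next, the vanishing $R^1\tau_{k*}$ of the left-hand term holds for $p > 0$: this is proved word for word as the vanishing claim in the proof of Proposition \ref{p-F=F_E}, applying \cite[10.37]{Kol13} to the plt pair $(Z_{W_k},{\tau_k}_*^{-1}\Gamma)$ together with the fact that the relevant divisor is $\tau_k$-nef, and absorbing the contribution of the $\Q$-Cartier divisor $G$ on $Z$ along the way. Consequently the natural map $(\fa_{pd}(\ord_{W_k})\cap\cI_G)\cdot\cO_{Z_0} \to \fa_p(\ord_{F_k})\cap\cI_{G_0}$ has cokernel a subquotient of a pushforward of $\mathcal{T}\otimes\cO_{\cZ}(-pF_k)$, hence supported over $\rho_k(\Supp\mathcal{T}) \subset Z_0$. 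Now the center of $W$ over $X$ coincides with the image of $\widetilde{X}'_0$ under $\psi$, which has codimension at least two in $X$ because $X_0$ is Cartier and hence $\psi$ is an isomorphism at the generic point of $X_0$; tracing this through the cone and the extraction $\rho_k$, the center of $F_k$ over $Z_0$ has codimension at least two, and intersecting further with $G$ (because of the $\cI_G$) cuts the support of the error term down to dimension at most $n-2$. Passing to the associated graded via \eqref{e-Nideal}, the displayed quotient is the $j$-th graded piece of a finitely generated graded module over $\bigoplus_p \fa_p(\ord_{F_k})$ supported on this locus; a Hilbert-function computation for $\cO_{Z_{0,F_k}}(-jF_k)$ restricted to a subscheme of dimension $\le n-2$ then shows that the displayed quotient has $\mathbbm{k}$-dimension $O(j^{n-2})$, as desired.

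The hard part will be the codimension count: pinning down the support of the error sheaf $\mathcal{T}$ (equivalently, the precise failure of surjectivity once one intersects with $\cI_G$) and showing it genuinely has dimension $\le n-2$ — in particular handling the case where the center of $F_k$ happens to lie inside $\widetilde{G}_0$, where the naive ``intersect with $G$ drops one dimension'' reasoning must be replaced by a closer look at the non-Cartier locus of $F_k$ along $\widetilde{G}_0$ and at the torsion of $(\cI_G\cdot\cO_{Z_{W_k}})\otimes\cO_{\cZ}$. The accompanying bookkeeping — matching the cokernels of the restriction maps with the displayed quotient via \eqref{e-Nideal} and keeping track of integral closures — is routine but must be carried out carefully, since it mixes the bigrading by $m$ and $p$ with the filtration-quotient structure.
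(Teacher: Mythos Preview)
Your overall strategy---pushforward a restriction sequence, identify the error as a sheaf supported in small codimension, and read off the Hilbert bound---matches the paper's. But there is a genuine gap at exactly the point you flag: the codimension count.

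Your count runs ``center of $F_k$ has codimension $\ge 2$ in $Z_0$, and intersecting with $G$ drops one more,'' but you correctly note this fails when the center of $F_k$ lies in $\widetilde{G}_0$. This case is not pathological---nothing in the setup prevents the center of $\ord_W$ on $X$ from landing inside the boundary component $B$---so it must be handled, and your proposal does not do so. A related problem is the $R^1\tau_{k*}$ vanishing you invoke ``word for word'': the argument in Proposition \ref{p-F=F_E} uses Kawamata--Viehweg for a $\Q$-Cartier divisor, but once you tensor by $\cI_G\cdot\cO_{Z_{W_k}}$ (or by $\cO_{Z_{W_k}}(-\widetilde{G})$) you are no longer twisting by a $\Q$-Cartier divisor on $Z_{W_k}$. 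Indeed, $\widetilde{G}$ can fail to be $\Q$-Cartier on $Z_{W_k}$, and this failure is the entire source of the error term.

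The paper's proof avoids both issues by a different mechanism. It works with the reflexive sheaf $\cO_{Z_{W_k}}(-\widetilde{G}-rdW_k)$ (for $r=0,\dots,q-1$ where $qdW_k$ is Cartier) and identifies the cokernel $\mathcal{Q}_r$ of the restriction to $Z_{0,F_k}$ as supported on the locus where $\widetilde{G}$ is \emph{not} $\Q$-Cartier. The codimension bound then comes from a separate $\Q$-factoriality lemma (Lemma \ref{l-cod3Qfac}): if $(X,\Delta+E_1+E_2)$ is lc with $(X,\Delta)$ klt and $E_1,E_2$ prime $\Q$-Cartier divisors, then $X$ is $\Q$-factorial at every codimension-three point lying in $E_1\cap E_2\cap\Supp(\Delta)$. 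Applied with $E_1=W_k$, $E_2=Z_{0,F_k}$, and $\Delta={\tau_k}_*^{-1}\Gamma$, this forces $\mathcal{Q}_r\vert_D$ to have dimension $\le n-2$. No Kawamata--Viehweg vanishing is needed; only Serre vanishing for $b\gg 0$ (via $\tau_k$-ampleness of $-W_k$) is used to make the pushforward sequence right-exact, and then one restricts to the ample Cartier divisor $D=qdW_k$ to get the Hilbert-function bound. The missing idea in your approach is precisely this link between the lc condition on the full boundary and $\Q$-factoriality at codimension three.
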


A key subtlety in proving this lemma is that the divisors $\widetilde{G}$ and $\widetilde{G}_0$
may fail to be $\Q$-Cartier. 
The proof we will utilize the fact that 
$$(Z_{W_k}, W_k + Z_{0,F_k}+ {{\tau_k}_*}^{-1} (\Gamma))$$
 is lc.
The latter implies $F_k =W_k \cap  Z_{0,F_k}$ is not
 contained in $\Supp( {{\tau_k}_*}^{-1} (\Gamma))$  by \cite[2.32.2]{Kol13}. Hence, $F_k \not\subset \tilde{G}$.

\begin{proof} 
 Fix a positive integer $q$ such that $qdW_k$ is Cartier. For each $r \in \{0, \ldots, q-1 \}$, set
 \[
 \mathcal{Q}_r:= {\rm coker} \left( 
 \cO_{Z_{W_k}}(-\widetilde{G} -rd W_k ) \to 
 \cO_{Z_{0,F_k}}(-\widetilde{G}_0 -r F_k ) 
 \right),
 \]
 where the previous map is defined via restriction.

 \medskip
 
 \noindent \emph{Claim}: 
 The support of $\mathcal{Q}_r$ is contained in the intersection of $Z_{0,F_k}$ and the locus where $\widetilde{G}$ is not $\Q$-Cartier. 
 
To prove the claim, it suffices to show 
\[
 \cO_{Z_{W_k}}(-\widetilde{G} -rd W_k ) \to 
 \cO_{Z_{0,F_k}}(-\widetilde{G}_0 -r F_k )
 \to 0 
\]
is exact along 
\[
U:= \{ z \in Z_{W_k}  \, \vert \, \widetilde{G} \text{ is $\Q$-Cartier at } z \}.\]
 The  the proof of \cite[5.26]{KM98} implies the statement holds, assuming 
\begin{itemize}
\item[(i)] $(\widetilde{G} +rd W_k)\vert_{U}$  and $Z_{0,F_k}\vert_{U}$ are $\Q$-Cartier and
\item[(ii)]$(\widetilde{G} +rd W_k)\vert_{U}$ is Cartier at all codimension two points of $U$ contained in $Z_{0,F_k} \vert_{U}$.
\end{itemize}

Statement (i) is clear, since $W_k$ and $Z_{0,F_k}$ are $\Q$-Cartier and $U$ is the locus where $\widetilde{G}$ is $\Q$-Cartier. 
For (ii), observe that $G+rdW_k$ is Cartier at the generic point of $F_k$, since $F_k \not\subset \widetilde{G}$ 
and $dW_k$ is Cartier at the generic point of $F_k$. 
 Note that $Z_{W_k}$ is regular at the remaining codimension two points contained in $Z_{0, F_k}$. Indeed, 
 $
Z_{W_k} \setminus W_k \simeq Z \setminus \sigma(C) 
$
and $Z$ is regular along all codimension one points of $Z_0$, since $Z_0$ is a normal Cartier divisor on $Z$. 
 \medskip

We now return to the proof of the lemma. 
Given a positive integer $j$, write $j=bq+r$ where $r \in \{0, \ldots, q-1 \}$.
Consider the exact sequence 
 \[
 \cO_{Z_{W_k}}(-\widetilde{G} -jd W_k ) \to 
 \cO_{Z_{0,F_k}}(-\widetilde{G}_0 -j F_k ) \to
 \mathcal{Q}_r(-bqdW_k)
 \to 0 .
 \]
Pushing forward the sequence by ${\tau_k}_*$, we see
 \[
\fa_{jd} (\ord_{W_k}) \cap \cI_G 
\to 
\fa_{j} (\ord_{F_k}) \cap \cI_{G_0} 
\to 
{\tau_k}_{*} \mathcal{Q}_r(-bqd W_k) 
\to 
0,
\]
is exact for $b\gg0$, since $-W_k$ is $\tau_k$-ample.
Hence, 
\begin{multline*}
\dim
\left( 
\frac{
 \fa_j(\ord_{F_k}) \cap \cI_{G_0}
 }
{  
\left( \fa_{jd} (\ord_{W_k}) \cap \cI_G\right) \cdot \cO_{Z_0} 
+  \left( \fa_{j+1}(\ord_{F_k}) \cap \cI_{G_0} \right) 
} 
\right) 
 \\
\leq 
\dim
\left( 
\frac{
 \fa_j(\ord_{F_k}) \cap \cI_{G_0}
 }
{  
\left( \fa_{jd} (\ord_{W_k}) \cap \cI_G\right) \cdot \cO_{Z_0} 
+  \left( \fa_{j+q}(\ord_{F_k}) \cap \cI_{G_0} \right) 
} 
\right)\\
\leq 
\dim
\big( 
{\rm coker} \left( {\tau_k}_{*} \mathcal{Q}_r(-bqd W_k) 
\to 
{\tau_k}_{*} \mathcal{Q}_r(-(b+1)qd W_k) \right) 
\big) .
  \end{multline*}
 We are now reduced to showing that the last term equals $O(b^{n-2})$.

Let $D$ denote the effective Cartier divisor $qdW_k$.
Consider the exact sequence
$$
 \mathcal{Q}_r(-(b+1)D) \to
\mathcal{Q}_r(- bD) 
\to 
  \mathcal{Q}_{r}( -bD)\vert_D
  \to 
  0
$$
After pushing forward by $\tau_k$, we see
$$
{\tau_k}_{*} \mathcal{Q}_r(-(b+1)D) \to
{\tau_k}_{*} \mathcal{Q}_r(-bD) \to 
H^0(D,  \mathcal{Q}_{r}(-b D) \vert_D ) \to 0. 
$$
is exact for $b\gg0$. Since $\mathcal{Q}_r$ is supported on the locus of $Z_{0,F_k}$ where $\widetilde{G}$ is not $\Q$-Cartier, 
Lemma \ref{l-cod3Qfac} implies  $\mathcal{Q}_r \vert_{D}$ has dimension at most $\dim(Z_{0,F_k}) -3  = n-2$. 
Therefore,  
$$H^0(D,  \mathcal{Q}_{r}\vert_{D} (-bD\vert_{D} ) ) = O\left( b^{n-2} \right)$$
 and the lemma is complete. 
\end{proof}
The previous proof used the following property of lc pairs. 

\begin{lem}\label{l-cod3Qfac}
Let $(X,\Delta+E_1+E_2)$ be an lc pair such that (i) $(X,\Delta)$ is klt and (ii) $E_1$ and $E_2$ are $\Q$-Cartier prime divisors. 
If $x\in X$ is a codimension three point and $x\in E_1 \cap E_2 \cap \Supp(\Delta)$, then $X$ is $\Q$-factorial at $x$.
\end{lem}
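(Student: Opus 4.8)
The statement depends only on the local ring $\mathcal{O}_{X,x}$ and is preserved under passing to an affine neighbourhood of $x$. Cutting by general hyperplane sections through $\overline{\{x\}}$ --- which, by Bertini, preserve normality, the klt and log canonical hypotheses, and $\Q$-Cartierness of $E_1,E_2$, and which preserve $\Q$-factoriality at $x$ by Grothendieck--Lefschetz for the divisor class group --- we may assume $\dim X=3$. So let $X$ be a normal affine klt threefold, $x$ a closed point lying on prime $\Q$-Cartier divisors $E_1,E_2$ and on a prime component $D\subseteq\Supp(\Delta)$ with coefficient $c>0$. We must show $\mathrm{Cl}(\mathcal{O}_{X,x})$ is torsion.

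Since $E_1$ is $\Q$-Cartier and $(X,E_1+E_2+cD)$ is log canonical, $E_1$ is normal near $x$, and by adjunction $(E_1,B_1)$ is log canonical near $x$, where $B_1:=\Diff_{E_1}(\Delta+E_2)$. As $E_2$ is $\Q$-Cartier, $B_1$ has coefficient $\ge1$ along the curve $C:=(E_1\cap E_2)_{\mathrm{red}}\ni x$; log canonicity forces this coefficient to be exactly $1$ and $C$ to be distinct from the curve $(E_1\cap D)_{\mathrm{red}}\ni x$, along which $B_1$ has positive coefficient. In particular $B_1$ is an effective boundary with $x\in\Supp(B_1)$, so $E_1$ is klt at $x$, hence a quotient singularity, hence $\Q$-factorial at $x$; moreover $C$, being a coefficient-one log canonical centre of the klt surface pair $(E_1,B_1)$, is smooth at $x$. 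Symmetrically, $E_2$ is $\Q$-factorial at $x$, and along $C$ the threefold $X$ is smooth with $E_1,E_2$ meeting transversally.

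The remaining step, and the main obstacle, is to upgrade this to $\Q$-factoriality of $X$ itself at $x$. The goal is to show $X$ has a quotient singularity at $x$ --- equivalently, that near $x$ the pair $(X,E_1+E_2+cD)$ is, formally, a finite abelian quotient of a log smooth pair --- since quotient singularities are $\Q$-factorial. I would distinguish whether $x$ is a log canonical centre of $(X,E_1+E_2+cD)$. If it is, then $E_1\supsetneq C\supsetneq x$ is a maximal chain of log canonical centres of codimensions $1,2,3$, and the structure theory of log canonical centres \cite[Ch.~2--4]{Kol13} exhibits the germ at $x$ as a finite abelian quotient of a log smooth threefold --- as happens in the model example $X=\mathbb{A}^3/\mu_N$ --- in particular $\Q$-factorial. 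If $x$ is not a log canonical centre, then (after checking, via adjunction once more, that the different $B_1$ cannot have a coefficient-one component supported where $X$ is singular) every log canonical centre of the pair near $x$ has $X$ smooth and the boundary snc at its generic point, so $(X,E_1+E_2+cD)$ is dlt near $x$; taking a small $\Q$-factorialization $\sigma\colon Y\to X$ (which exists by \cite{BCHM10}), one uses that $E_1,E_2$ are $\Q$-Cartier --- so $\sigma^{*}E_i=\widetilde E_i$, $K_Y=\sigma^{*}K_X$, and $(Y,\widetilde E_1+\widetilde E_2+c\widetilde D)$ is dlt with the same discrepancies --- together with a coefficient comparison along $\widetilde E_1,\widetilde E_2$ of a relation pulled back from $X$, in the spirit of the contradiction closing the proof of Proposition~\ref{p-filling}, to conclude that $\sigma$ contracts no curve over $x$ and is therefore an isomorphism near $x$. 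The entire difficulty is concentrated here: the adjunction reductions and the surface classification are routine, while ruling out a small non-$\Q$-factorial contraction at $x$ in the presence of two $\Q$-Cartier boundary divisors crossing through $x$, a third boundary divisor through $x$, and the klt hypothesis on $X$, is where the substance of the argument lies.
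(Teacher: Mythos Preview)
Your reduction to the threefold case and the adjunction analysis on $E_1$ are on the right track, but the proof is incomplete exactly where you say the difficulty lies: the ``remaining step'' of upgrading $\Q$-factoriality of $E_1,E_2$ to $\Q$-factoriality of $X$ is only sketched, and neither branch of your case division is carried out. The appeal to the structure theory of lc centres for the first case does not produce a quotient singularity without substantial further argument, and in the second case the ``coefficient comparison in the spirit of Proposition~\ref{p-filling}'' is not a valid argument here --- that proposition concerns two fillings of a family over a curve, not a small $\Q$-factorialization of a threefold, and there is no analogue of the $\widetilde{X}_0,\widetilde{X}'_0$ dichotomy. Also, your assertion that $E_1$ is normal is not justified: without knowing $E_1$ is Cartier you do not immediately get $E_1$ Cohen--Macaulay, so the $S_2$ half of Serre's criterion is missing.

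The paper's proof bypasses the obstacle you identify by a single move you did not make: after cutting to dimension three, pass to an index-one cover so that $E_1$ and $E_2$ become \emph{Cartier}. Then $E_1$ is Cohen--Macaulay (as a Cartier divisor on a klt variety, \cite[5.25]{KM98}), and a short $R_1$ argument gives normality. Adjunction makes $(E_1,\Delta\vert_{E_1}+E_2\vert_{E_1})$ lc; since $E_2\vert_{E_1}$ is Cartier through $x$, removing it raises every log discrepancy over $x$ by at least $1$, so $(E_1,\Delta\vert_{E_1})$ is \emph{canonical} at $x$. But a canonical surface pair with nonzero boundary through $x$ forces $E_1$ smooth at $x$ (\cite[2.29.2]{Kol13}); and a normal variety containing a smooth Cartier divisor through $x$ is itself smooth at $x$. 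Thus $X$ is smooth at $x$ on the cover, hence $\Q$-factorial at $x$ downstairs. The index-one cover is the missing idea: it converts your ``main obstacle'' into a two-line surface computation.
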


\begin{proof}
After taking appropriate index one covers, we can assume $E_1$ and $E_2$ are Cartier. 
By cutting, we can assume $\dim(X)=3$ and $x$ is a closed point. 
Since $(X,\Delta)$ is klt,  $E_1$ is Cohen-Macaulay  \cite[5.25]{KM98}. 

We claim $E_1$ is normal at $x$. If not, since $E_1$ is $S_2$, it cannot be $R_1$ by Serre's Theorem. 
Hence, $E_1$ is singular on a curve $C$ passing through $x$. 
Note that  $C \not\subset \Supp (\Delta) \cup E_2$ by \cite[2.32]{Kol13}. 
If we consider the normalization $E_1^\nu\to E_1$, 
we see  $\Diff_{E_1^\nu}(\Delta)$ has coefficient one along the divisors in the preimage of $C$ and positive coefficient along the preimage of $\Supp(\Delta)$. 
By  \cite[2.31]{Kol13}, this implies that 
$(E_1^\nu,\Diff_{E_1^\nu}(\Delta) +E_2\vert_{E_1^\nu} )$ is not lc, which contradicts adjunction.

Shrinking around $x$, we may assume  $E_1$ is normal.
Adjunction gives ${(E_1,\Delta\vert_{E_1}+ E_2 \vert_{E_1} )}$ is lc. 
Since $E_{2} \vert_{E_1}$ is Cartier, $(E_1, \Delta\vert_{E_1})$ is canonical at $x$.  
Using that $x\in \Supp( \Delta\vert_{E_1})$, \cite[2.29.2]{Kol13} yields that $E_1$ is smooth at $x$. 
Hence, $X$ is smooth at $x$.
\end{proof}

\begin{proof}[Proof of Proposition \ref{p-initalcod2}]
To prove the statement for (1), it suffices to show 
\begin{equation}\label{e-Omn-2}
\sum_{p \geq 0 } \dim N_{p,m} = O(m^{n-2}),
\end{equation}
where $N_{p,m} := \init (I_{B_0})_{p,m} / I_{p,m}$. 
The previous estimate follows from Lemma \ref{l-idealgrowth}.

Indeed, by Lemma \ref{l-idealgrowth} and \eqref{e-Nideal}
\[
\sum_{m \geq 0 } \dim  N_{jd-mk,m}  = O \left( j^{n-2} \right) 
.\]
Since $ \cF$ is linearly bounded, there exists a positive integer $C$ so that $\gr_{\cF}^{p} R_m =0$ for all $p\geq mC$. 
Hence, $N_{p,m} =0$ for $p \geq mC$ and we see 
\[
\sum_{m=0}^{M} \sum_{p\geq 0}  \dim N_{pd,m} 
\leq 
\sum_{j=0}^{M(C+k)/d} \sum_{m\geq 0} \dim N_{jd-mk,m} 
= 
O\left( M^{n-1}\right)
\]
Therefore, 
\[
 \sum_{p\geq 0}  \dim N_{pd,m}  = O(m^{n-2})
. \]
Observe that $\gr_{\cF}^p R_m=0$ for all  $p$ not divisible by $d$ 
by Proposition \ref{p-F=F_E} and the fact that $E=d^{-1} F$. 
Therefore, the previous equation implies  \eqref{e-Omn-2} holds. Hence, (1) holds and (2) holds by an identical argument.
 \end{proof}

\begin{proof}[Proof of Corollary \ref{c-aut}]
The proof is the same as the proof of Corollary \ref{c-aut2}, 
but with Theorem \ref{t-sep} replaced by Theorem \ref{t-main} (3).
\end{proof}

\begin{bibdiv}
\begin{biblist}

\bib{AH11}{incollection}{ 
  author={Abramovich, D.},
    AUTHOR={Hassett, B.},
     TITLE = {Stable varieties with a twist},
 BOOKTITLE = {Classification of algebraic varieties},
    SERIES = {EMS Ser. Congr. Rep.},
     PAGES = {1--38},
 PUBLISHER = {Eur. Math. Soc., Z\"urich},
      YEAR = {2011},
  }
 
 \bib{Alp13}{article}{
    AUTHOR = {Alper, J.},
     TITLE = {Good moduli spaces for {A}rtin stacks},
   JOURNAL = {Ann. Inst. Fourier (Grenoble)},
  FJOURNAL = {Universit\'e de Grenoble. Annales de l'Institut Fourier},
    VOLUME = {63},
      YEAR = {2013},
    NUMBER = {6},
     PAGES = {2349--2402},
  }
  
   \bib{AFS17}{article}{
    AUTHOR = {Alper, J.} 
    AUTHOR={Fedorchuk, M.}
    AUTHOR= {Smyth, D.},
     TITLE = {Second flip in the {H}assett-{K}eel program: existence of good
              moduli spaces},
   JOURNAL = {Compos. Math.},
  FJOURNAL = {Compositio Mathematica},
    VOLUME = {153},
      YEAR = {2017},
    NUMBER = {8},
     PAGES = {1584--1609},
  }

	

  \bib{BBJ15}{article}{
   AUTHOR={Berman, R.} 
    AUTHOR = {Boucksom, S.} 
    AUTHOR={ Jonsson, M.},
     TITLE = {A variational approach to the Yau-Tian-Donaldson conjecture},
   JOURNAL = {arXiv:1509.04561},
      YEAR = {2015},
 }

\bib{Bir16}{article}{
   author={Birkar, C.},
     TITLE = {Anti-pluricanonical systems on Fano varieties},
   JOURNAL = {arXiv:1603.05765, to appear in Ann. of Math.},
      YEAR = {2016},
   }

\bib{BCHM10}{article}{
   author={Birkar, C.},
  author={Cascini, P.},
  author={Hacon, C.},
  author={McKernan, J.},
 title={Existence of minimal models for varieties of log general type},
  journal={J. Amer. Math. Soc.},
  volume={23},
   date={2010},
   number={2},
   pages={405--468},
}

\bib{BC11}{article}{
    AUTHOR = {Boucksom, S.}
    AUTHOR = {Chen, H.}
     TITLE = {Okounkov bodies of filtered linear series},
   JOURNAL = {Compos. Math.},
  FJOURNAL = {Compositio Mathematica},
    VOLUME = {147},
      YEAR = {2011},
    NUMBER = {4},
     PAGES = {1205--1229},
}

\bib{BdFFU15}{incollection}{
    AUTHOR = {Boucksom, S.}
    AUTHOR={de Fernex, T. }
    AUTHOR={ Favre, C.} 
    AUTHOR={Urbinati, S.},
     TITLE = {Valuation spaces and multiplier ideals on singular varieties},
 BOOKTITLE = {Recent advances in algebraic geometry},
    SERIES = {London Math. Soc. Lecture Note Ser.},
    VOLUME = {417},
     PAGES = {29--51},
 PUBLISHER = {Cambridge Univ. Press, Cambridge},
      YEAR = {2015},
   MRCLASS = {14F18 (13A18 14B05 14C20 14J17)},
  MRNUMBER = {3380442},
MRREVIEWER = {Patrick Graf},
}

\bib{BHJ16}{article}{
    AUTHOR = {Boucksom, S.} 
    AUTHOR={Hisamoto, T.} 
    AUTHOR={ Jonsson, M.},
     TITLE = {Uniform {K}-stability, and asymptotics of energy functionals in K\"ahler geometry},
   JOURNAL = {arXiv:1603.01026, to appear in J. Eur. Math. Soc.},
      YEAR = {2016},
 }

\bib{BHJ17}{article}{
    AUTHOR = {Boucksom, S.} 
    AUTHOR={Hisamoto, T.} 
    AUTHOR={ Jonsson, M.},
     TITLE = {Uniform {K}-stability, {D}uistermaat-{H}eckman measures and
              singularities of pairs},
   JOURNAL = {Ann. Inst. Fourier (Grenoble)},
  FJOURNAL = {Universit\'e de Grenoble. Annales de l'Institut Fourier},
    VOLUME = {67},
      YEAR = {2017},
    NUMBER = {2},
     PAGES = {743--841},
 }
 
  \bib{BoJ18}{article}{
    AUTHOR = {Boucksom, S.} 
    AUTHOR={ Jonsson, M.},
    TITLE={ A non-Archimedean approach to K-stability},
     YEAR={2018},
     JOURNAL= {arXiv:1805.11160},
     SHORTHAND = {BoJ18},
 }

\bib{BJ17}{article}{
   author={Blum, H.},
   AUTHOR={Jonsson, M.} 
     TITLE = {Thresholds, valuations, and K-stability},
   JOURNAL = {arXiv:1706.04548},
      YEAR = {2017},
        SHORTHAND = {BlJ17},
   }

\bib{BL18}{article}{
   author={Blum, H.},
   AUTHOR={Liu, Y.} 
   TITLE = {Openness of uniform K-stability in families of $\mathbb{Q}$-Fano varieties},
   JOURNAL = {arXiv:1808.09070},
      YEAR = {2018},
   }

 \bib{Blu17}{article}{
    AUTHOR = {Blum, H.},
     TITLE = {On divisors computing MLD's and LCT's},
   JOURNAL = {arXiv:1605.09662v3},
      YEAR = {2017},
  }  

\bib{Blu18}{article}{
    AUTHOR = {Blum, H.},
     TITLE = {Existence of valuations with smallest normalized volume},
   JOURNAL = {Compos. Math.},
  FJOURNAL = {Compositio Mathematica},
    VOLUME = {154},
      YEAR = {2018},
    NUMBER = {4},
     PAGES = {820--849},
  }
  
  \bib{Blu18b}{book}{
    AUTHOR = {Blum, H.},
     TITLE = {Singularities and K-stability},
   PUBLISHER = {PhD thesis. University of Michigan},
      YEAR = {2018},
  }

 \bib{Che09}{article}{
    AUTHOR = {Cheltsov, I.},
     TITLE = {On singular cubic surfaces},
   JOURNAL = {Asian J. Math.},
    VOLUME = {13},
      YEAR = {2009},
    NUMBER = {2},
     PAGES = {191--214},
     }

\bib{Che18}{article}{
    AUTHOR = {Chen, W.},
    TITLE = {Boundedness of varieties of Fano type with alpha-invariants and volumes bounded below},
    JOURNAL = {arXiv:1810.04019},
       YEAR = {2018},
    }

\bib{CDS}{article}{
    AUTHOR = {Chen, X.},
    AUTHOR={Donaldson, S.} 
    AUTHOR={Sun, S.},
     TITLE = {K\"ahler-{E}instein metrics on {F}ano manifolds. 
     }
   JOURNAL = {J. Amer. Math. Soc.},
  FJOURNAL = {Journal of the American Mathematical Society},
    VOLUME = {28},
      YEAR = {2015},
    NUMBER = {1},
     PAGES = {183--197, 199--234, 235--278},
 }

\bib{CP18}{article}{
   author={Codogni, G.},
   AUTHOR={Patakfalvi, Z.} 
   TITLE = {Positivity of the CM line bundle for families of K-stable klt Fanos},
   JOURNAL = {arXiv:1806.07180},
      YEAR = {2018},
   }	




\bib{Der16}{article}{
    AUTHOR = {Dervan, R.},
     TITLE = {Uniform stability of twisted constant scalar curvature
              {K}\"{a}hler metrics},
   JOURNAL = {Int. Math. Res. Not. },
  FJOURNAL = {International Mathematics Research Notices. IMRN},
      YEAR = {2016},
    NUMBER = {15},
     PAGES = {4728--4783},
}

  \bib{Don01}{article}{    
  AUTHOR = {Donaldson, S. K.},
     TITLE = {Scalar curvature and projective embeddings. {I}},
   JOURNAL = {J. Differential Geom.},
  FJOURNAL = {Journal of Differential Geometry},
    VOLUME = {59},
      YEAR = {2001},
    NUMBER = {3},
     PAGES = {479--522},
      ISSN = {0022-040X},
   MRCLASS = {32Q15 (32Q40 53C21)},
  MRNUMBER = {1916953},
MRREVIEWER = {Francisco Presas},
       URL = {http://projecteuclid.org/euclid.jdg/1090349449},
}
		
  \bib{Don02}{article}{
    AUTHOR = {Donaldson, S.},
     TITLE = {Scalar curvature and stability of toric varieties},
   JOURNAL = {J. Differential Geom.},
  FJOURNAL = {Journal of Differential Geometry},
    VOLUME = {62},
      YEAR = {2002},
    NUMBER = {2},
     PAGES = {289--349},
    }

    \bib{Fuj16a}{article}{
    AUTHOR = {Fujita, K.},
     TITLE = {On {$K$}-stability and the volume functions of
              {$\Bbb{Q}$}-{F}ano varieties},
   JOURNAL = {Proc. Lond. Math. Soc. (3)},
  FJOURNAL = {Proceedings of the London Mathematical Society. Third Series},
    VOLUME = {113},
      YEAR = {2016},
    NUMBER = {5},
     PAGES = {541--582},
   }

   \bib{Fuj17b}{article}{
   AUTHOR={Fujita, K.},
     TITLE = {K-stability of log Fano hyperplane arrangements},
   JOURNAL = {	arXiv:1709.08213},
      YEAR = {2017},
   }

    \bib{Fuj18}{article}{
  author={Fujita, K.},
   title={Optimal bounds for the volumes of K\"ahler-Einstein Fano manifolds},
   journal={ Amer. J. Math.},
     VOLUME = {140},
      YEAR = {2018},
    NUMBER = {2},
     PAGES = {391--414},
}

    \bib{Fuj16}{article}{
   AUTHOR = {Fujita, K.},
     TITLE = {A valuative criterion for uniform {K}-stability of
              {$\Bbb{Q}$}-{F}ano varieties},
   JOURNAL = {J. Reine Angew. Math.},
  FJOURNAL = {Journal f\"{u}r die Reine und Angewandte Mathematik. [Crelle's
              Journal]},
    VOLUME = {751},
      YEAR = {2019},
     PAGES = {309--338},
}

\bib{Fuj17}{article}{
   AUTHOR={Fujita, K.},
     TITLE = {Uniform {K}-stability and plt blowups of log {F}ano pairs},
   JOURNAL = {Kyoto J. Math.},
  FJOURNAL = {Kyoto Journal of Mathematics},
    VOLUME = {59},
      YEAR = {2019},
    NUMBER = {2},
     PAGES = {399--418},
   }

\bib{FO16}{article}{
    AUTHOR = {Fujita, K.} 
   AUTHOR={Odaka, Y.} 
     TITLE = {On the K-stability of Fano varieties and anticanonical divisors},
   JOURNAL = {Tohoku Math. J. (2)},
  VOLUME = {70},
      YEAR = {2018},
    NUMBER = {4},
     PAGES = {511--521},

   }

\bib{HMX14}{article}{
    AUTHOR = {Hacon, C.}
    AUTHOR={McKernan, J.}
     AUTHOR={Xu, C.},
     TITLE = {A{CC} for log canonical thresholds},
   JOURNAL = {Ann. of Math. (2)},
  FJOURNAL = {Annals of Mathematics. Second Series},
    VOLUME = {180},
      YEAR = {2014},
    NUMBER = {2},
     PAGES = {523--571},
 } 
 
 \bib{HK04}{article}{
AUTHOR = {Hassett, B.}
AUTHOR = {Kov\'{a}cs, S.}
TITLE = {Reflexive pull-backs and base extension},
JOURNAL = {J. Algebraic Geom.},
    VOLUME = {13},
      YEAR = {2004},
    NUMBER = {2},
     PAGES = {233--247},
}

\bib{Jia17}{article}{
    AUTHOR = {Jiang, C.} 
     TITLE = {Boundedness of $\Q$-Fano varieties with degrees and alpha-invariants bounded from below},
   JOURNAL = {arXiv:1705.02740, to appear in Ann. Sci. \'Ecole Norm. Sup.},
      YEAR = {2017},
   }

\bib{JM12}{article}{
    AUTHOR = {Jonsson, M.}
    AUTHOR={Musta\c t\u a, M.},
     TITLE = {Valuations and asymptotic invariants for sequences of ideals},
   JOURNAL = {Ann. Inst. Fourier (Grenoble)},
  FJOURNAL = {Universit\'e de Grenoble. Annales de l'Institut Fourier},
    VOLUME = {62},
      YEAR = {2012},
    NUMBER = {6},
     PAGES = {2145--2209 (2013)},
   }

   \bib{KM97}{article}{
    AUTHOR = {Keel, S.}
    AUTHOR={Mori, S.},
     TITLE = {Quotients by groupoids},
   JOURNAL = {Ann. of Math. (2)},
  FJOURNAL = {Annals of Mathematics. Second Series},
    VOLUME = {145},
      YEAR = {1997},
    NUMBER = {1},
     PAGES = {193--213},
      ISSN = {0003-486X},
       URL = {https://doi.org/10.2307/2951828},
}

	 \bib{Kol08}{article}{
    AUTHOR = {Koll{\'a}r, J.},
     TITLE = {Hulls and Husks},
   journal= {arXiv:0805.0576},
    year = {2009},
      }

	 \bib{Kol13}{book}{
    AUTHOR = {Koll{\'a}r, J.},
     TITLE = {Singularities of the minimal model program},
    series= {Cambridge Tracts in Math.},
    VOLUME = {200},
      NOTE = {With the collaboration of S. Kov{\'a}cs},
 PUBLISHER = {Cambridge University Press},
   ADDRESS = {Cambridge},
      YEAR = {2013},
   }
   
	 \bib{Kol19}{book}{
    AUTHOR = {Koll{\'a}r, J.},
     TITLE = {Families of varieties of general type},
    series= {Book to appear},
       YEAR = {2019},
   }

\bib{KM98}{book}{
   author={Koll{\'a}r, J.},
   author={Mori, S.},
   title={Birational geometry of algebraic varieties},
   series={Cambridge Tracts in Math.},
   volume={134},
   note={With the collaboration of C. H. Clemens and A. Corti;
   },
   publisher={Cambridge University Press},
   place={Cambridge},
   date={1998},
   pages={viii+254},
}



 \bib{LLSW17}{article}{
  author={Lee, K.},
  author={Li, Z.},
    author={Sturm, J.},
    author={Wang, X.}   
    TITLE = {Asymptotic Chow stability of toric Del Pezzo surfaces}
    journal={arXiv:1711.10099, to appear in Math. Res. Lett.}
    year={2017}
}

  \bib{Li17}{article}{
  author={Li, C.},
   title={ K-semistability is equivariant volume minimization},
 JOURNAL = {Duke Math. J.},
  FJOURNAL = {Duke Mathematical Journal},
    VOLUME = {166},
      YEAR = {2017},
    NUMBER = {16},
     PAGES = {3147--3218},
      ISSN = {0012-7094},
}

   \bib{Li18}{article}{
  author={Li, C.},
   title={Minimizing normalized volumes of valuations},
   journal={Math. Zeit.},
    VOLUME = {289},
      YEAR = {2018},
    NUMBER = {1-2},
     PAGES = {491--513},
  date={2018},
}

 \bib{LL16}{article}{
  author={Li, C.},
  author={Liu, Y.},
   title={ K\"ahler-Einstein metrics and volume minimization},
   JOURNAL = {Adv. Math.},
  FJOURNAL = {Advances in Mathematics},
    VOLUME = {341},
      YEAR = {2019},
     PAGES = {440--492},
}     

\bib{LW18}{article}{
author={Li, C.},
author={Wang, X.},
title={Private communications}
year={2018},
}

 \bib{LWX15}{article}{
  author={Li, C.},
  author={Wang, X.},
  author={Xu, C.}
   title={ Quasi-projectivity of the moduli space of smooth K\"ahler-Einstein Fano manifolds},
   journal={ Ann. Sci. \'Ecole Norm. Sup. },
     VOLUME = {51},
      YEAR = {2018},
    NUMBER = {3},
     PAGES = {739-772},
}

\bib{LWX18}{article}{
author={Li, C.},
author={Wang, X.},
author={Xu, C.},
title={Algebraicity of the metric tangent cones and equivariant K-stability},
journal={arXiv:1805.03393},
year={2018},
}

 \bib{LWX14}{article}{
  author={Li, C.},
  author={Wang, X.},
  author={Xu, C.}
   title={On the proper moduli spaces of smoothable K\"ahler-Einstein Fano varieties},
   journal={Duke Math. J.}, 
   volume={168}, 
   date={2019}, 
   number={8}, 
   pages={1387--1459},
}

\bib{LX14}{article}{
    AUTHOR = {Li, C.}
    AUTHOR={Xu, C.},
     TITLE = {Special test configuration and {K}-stability of {F}ano
              varieties},
   JOURNAL = {Ann. of Math. (2)},
  FJOURNAL = {Annals of Mathematics. Second Series},
    VOLUME = {180},
      YEAR = {2014},
    NUMBER = {1},
     PAGES = {197--232},
}

\bib{LX16}{article}{
    AUTHOR = {Li, C.}
    AUTHOR={Xu, C.},
     TITLE = { Stability of Valuations and Koll\'ar Components },
   JOURNAL = {arXiv:1604.05398, to appear in J. Eur. Math. Soc.},
      YEAR = {2016},
}

\bib{LX17}{article}{
    AUTHOR = {Li, C.}
    AUTHOR={Xu, C.},
     TITLE = { Stability of Valuations: Higher Rational Rank},
   JOURNAL = {Peking Math. J. },
      VOLUME = {1},
      YEAR = {2018},
      PAGES = {1--79},
}

 \bib{Liu18}{article}{
  author={Liu, Y.},
   TITLE = {The volume of singular {K}\"ahler--{E}instein {F}ano varieties},
   JOURNAL = {Compos. Math.},
  FJOURNAL = {Compositio Mathematica},
    VOLUME = {154},
      YEAR = {2018},
    NUMBER = {6},
     PAGES = {1131--1158},}

\bib{LLX18}{article}{
    AUTHOR = {Li, C.}
   AUTHOR={Liu, Y.},
    AUTHOR={Xu, C.},
     TITLE = {A guided tour to normalized volumes},
   JOURNAL = { arXiv:1806.07112},
      YEAR = {2018},
}



 \bib{Oda12}{article}{
    AUTHOR = {Odaka, Y.},
     TITLE = {On the moduli of {K}\"ahler-{E}instein {F}ano manifolds},
   JOURNAL = {arXiv:1211.4833},
      YEAR = {2012}
   }

 \bib{Oda15}{article}{
    AUTHOR = {Odaka, Y.},
     TITLE = {Compact moduli spaces of {K}\"ahler-{E}instein {F}ano varieties},
   JOURNAL = {Publ. Res. Inst. Math. Sci.},
  FJOURNAL = {Publications of the Research Institute for Mathematical         Sciences},
    VOLUME = {51},
      YEAR = {2015},
    NUMBER = {3},
     PAGES = {549--565},
 }

 \bib{OSY12}{article}{
    AUTHOR = {Ono, H.} 
    AUTHOR={Sano, Y.}
    AUTHOR={Yotsutani, N.},
     TITLE = {An example of an asymptotically {C}how unstable manifold with
              constant scalar curvature},
   JOURNAL = {Ann. Inst. Fourier (Grenoble)},
  FJOURNAL = {Universit\'{e} de Grenoble. Annales de l'Institut Fourier},
    VOLUME = {62},
      YEAR = {2012},
    NUMBER = {4},
     PAGES = {1265--1287},
      ISSN = {0373-0956},
   MRCLASS = {53C55 (53C21 55N91)},
  MRNUMBER = {3025743},
MRREVIEWER = {Adela-Gabriela Mihai},
       URL = {http://aif.cedram.org/item?id=AIF_2012__62_4_1265_0},
}

 \bib{PP10}{article}{
    AUTHOR = {Pasquier, B.} 
    AUTHOR={Perrin, N.},
     TITLE = {Local rigidity of quasi-regular varieties},
   JOURNAL = {Math. Z.},
  FJOURNAL = {Mathematische Zeitschrift},
    VOLUME = {265},
      YEAR = {2010},
    NUMBER = {3},
     PAGES = {589--600},
  }
  
  \bib{Rees61}{article}{
    AUTHOR = {Rees, D.},
     TITLE = {{${\germ a}$}-transforms of local rings and a theorem on
              multiplicities of ideals},
   JOURNAL = {Proc. Cambridge Philos. Soc.},
    VOLUME = {57},
      YEAR = {1961},
     PAGES = {8--17},
   MRCLASS = {16.00},
  MRNUMBER = {0118750},
MRREVIEWER = {H. T. Muhly},
}

\bib{SSY16}{article}{
    AUTHOR = {Spotti, C.}
    AUTHOR ={Sun, S.} 
    AUTHOR={ Yao, C.},
     TITLE = {Existence and deformations of {K}\"ahler-{E}instein metrics on
              smoothable {$\Bbb{Q}$}-{F}ano varieties},
   JOURNAL = {Duke Math. J.},
  FJOURNAL = {Duke Mathematical Journal},
    VOLUME = {165},
      YEAR = {2016},
    NUMBER = {16},
     PAGES = {3043--3083},
}

 \bib{Sze15}{article}{
    AUTHOR = {Sz\'ekelyhidi, G.},
     TITLE = {Filtrations and test-configurations},
      NOTE = {With an appendix by S. Boucksom},
   JOURNAL = {Math. Ann.},
  FJOURNAL = {Mathematische Annalen},
    VOLUME = {362},
      YEAR = {2015},
    NUMBER = {1-2},
     PAGES = {451--484},
  }

\bib{Tia97}{article}{
    AUTHOR = {Tian, G.},
     TITLE = {K\"ahler-{E}instein metrics with positive scalar curvature},
   JOURNAL = {Invent. Math.},
  FJOURNAL = {Inventiones Mathematicae},
    VOLUME = {130},
      YEAR = {1997},
    NUMBER = {1},
     PAGES = {1--37},
}

\bib{Tia15}{article}{
    AUTHOR = {Tian, G.},
     TITLE = {K-stability and {K}\"ahler-{E}instein metrics},
   JOURNAL = {Comm. Pure Appl. Math.},
  FJOURNAL = {Communications on Pure and Applied Mathematics},
    VOLUME = {68},
      YEAR = {2015},
    NUMBER = {7},
     PAGES = {1085--1156},
      }

\bib{Vie95}{book}{
    AUTHOR = {Viehweg, E.},
     TITLE = {Quasi-projective moduli for polarized manifolds},
    SERIES = {Ergeb. Math. Grenzgeb. (3)},
    VOLUME = {30},
 PUBLISHER = {Springer-Verlag, Berlin},
      YEAR = {1995},
     PAGES = {viii+320},
}
  \bib{WX14}{article}{
    AUTHOR = {Wang, X.}
    AUTHOR ={Xu, C.},
     TITLE = {Nonexistence of asymptotic {GIT} compactification},
   JOURNAL = {Duke Math. J.},
  FJOURNAL = {Duke Mathematical Journal},
    VOLUME = {163},
      YEAR = {2014},
    NUMBER = {12},
     PAGES = {2217--2241},
}

\bib{WN12}{article}{
    AUTHOR = {Witt Nystr\"om, D.},
     TITLE = {Test configurations and {O}kounkov bodies},
   JOURNAL = {Compos. Math.},
  FJOURNAL = {Compositio Mathematica},
    VOLUME = {148},
      YEAR = {2012},
    NUMBER = {6},
     PAGES = {1736--1756},
      ISSN = {0010-437X},
 }

\end{biblist}
\end{bibdiv}
\bigskip

\end{document}